\newcommand{\eps}{\varepsilon}
\newcommand{\cC}{\mathcal{C}}
\newcommand{\cD}{\mathcal{D}}
\newcommand{\cG}{\mathcal{G}}
\newcommand{\cK}{\mathcal{K}}
\newcommand{\cL}{\mathcal{L}}
\newcommand{\cM}{\mathcal{M}}
\newcommand{\cP}{\mathcal{P}}
\newcommand{\cR}{\mathcal{R}}
\newcommand{\cT}{\mathcal{T}}
\newcommand{\cY}{\mathcal{Y}}
\newcommand{\cZ}{\mathcal{Z}}
\newcommand{\EE}{\mathbb{E}}
\newcommand{\NN}{\mathbb{N}}
\newcommand{\PP}{\mathbb{P}}
\newcommand{\RR}{\mathbb{R}}
\newcommand{\ZZ}{\mathbb{Z}}
\newcommand{\bone}{\mathbf{1}}
\newcommand{\indep}{\perp \!\!\! \perp}
\theoremstyle{plain}
\newtheorem{theorem}{Theorem}[section]
\newtheorem{corollary}[theorem]{Corollary}
\newtheorem{lemma}[theorem]{Lemma}
\newtheorem{proposition}[theorem]{Proposition}
\newtheorem{assumption}[theorem]{Assumption}
\newtheorem{example}[theorem]{Example}
\theoremstyle{definition}
\newtheorem{remark}[theorem]{Remark}
\newtheorem{definition}[theorem]{Definition}
\numberwithin{equation}{section}
\title{Particle Systems with Local Interactions via Hitting Times and Cascades on Graphs\footnote{Research of QY was
partially supported by the National Science Foundation grant DMS 2406762.}}
\author{Yucheng Guo and Qinxin Yan}
\author{Yucheng Guo\footnote{Department of Operations Research and Financial
Engineering, Princeton University, Princeton, NJ, 08540, USA, email: 
{\tt yg7348@princeton.edu }. }
\and Qinxin Yan\footnote{Program of Applied and Computational
Mathematics, Princeton University, Princeton, NJ, 08540, USA, email: 
{\tt qy3953@princeton.edu}. }}
\date{}
\begin{document}

\maketitle

\vspace{0mm}
\begin{abstract}
We introduce a family of particle systems on sparse graphs where local interactions occur via hitting times, providing a dynamic and tractable model for default cascades in large sparsely-connected financial networks. Building on the framework of \cite{lacker2022localweakconvergencesparse}, we extend convergence theory to systems with singular interactions, capturing the abrupt and discontinuous nature of systemic events. We establish conditions for well-posedness through a minimality principle and connect fragility to dynamic percolation thresholds. Our analysis demonstrates continuity of the joint law of defaults with respect to local graph convergence, establishes convergence of empirical distributions, and characterizes the default time distribution in tree-like networks. This framework offers a rigorous and flexible foundation for modeling systemic risk in evolving financial systems, featuring continuous-time dynamics, heterogeneous and local interactions, and instantaneous default cascades.
\end{abstract}
\vspace{1mm}

\noindent\textbf{Key words:} Local singular interaction, large particle system, default cascade. 
\vspace{3pt}

\noindent\textbf{Mathematics Subject Classification:} 
 82C22, 91G40,  60K35, 60J75
\def\d{\mathrm{d}}
\section{Introduction}\label{sec:intro}
The global financial crisis of 2008 and the more recent bank failures in 2023 highlighted how interconnections among financial institutions can amplify idiosyncratic shocks into systemic events. This has made the analysis of default cascades in financial networks a central problem in systemic risk. Many existing models describe interbank contagion through static clearing mechanisms or discrete-time dynamics. Although these approaches have yielded important insights, they are less well suited to capturing the continuous-time, discontinuous, and self-exciting nature of contagion generated by sequential defaults. In this paper, we introduce a continuous-time locally interacting particle system for default cascades on networks, providing a tractable framework for the analysis of systemic fragility and resilience.

We propose to study the following interacting particle system on a countable and locally finite graph $G=(V,E)$:
\begin{equation}\label{eq:1.1}
\begin{aligned}
X_v(t)&=x_v+Z_v(t)-\sum_{u\in N_G^-(v)}c_{uv}\bone_{\{u\in D_t\}},\quad v\in V\\
D_t&=\big\{v\in V:\,\inf_{s\in[0,t]}X_v(s)\leq0\big\}.    
\end{aligned}    
\end{equation}
It can be seen as a stylized model for an inter-connected network of banks with mutual lending, in which the default of a bank leads to immediate losses to its creditors. In this system, each vertex $v\in V$ represents a particle (financial institution, such as a bank) with initial health (asset value) $x_v \in [0, \infty)$, $(Z_v)_{v\in V}$ are a pre-determined collection of $V$-indexed c\`adlàg stochastic processes (e.g. Brownian motions or more general Lévy processes) which represent the noise driving the dynamics, $N_G^-(v):=\{u\in V:(u,v)\in E\}$ denotes the in-neighborhood of $v$, and $c_{uv}\ge0$ is the loss suffered by particle $v$ if particle $u$ dies (or defaults)\footnote{Rigorously speaking, the loss triggered by the default of a bank depends on multiple factors such as interbank lending relationships, asset and liability structure, counterparty risk in derivatives, clearing and settlement mechanism, investor sentiment, and regulatory scrutiny. We choose to set the default loss as a pre-determined constant to abstract away from the idiosyncratic factors that determine loss severity and concentrate on the structural dynamics of systemic risk transmission.}, that is, particle $u$’s health falls to $0$. The set-valued process $t\mapsto D_t$ records the set of banks which default no later than time $t$.

\begin{remark}
In this model, when a bank defaults, it triggers immediate losses to its neighbors, after which it cannot have a second impact on the system. In that sense, the defaulted bank can be treated as ``removed" from the network, even though the network structure remains the same and the corresponding driving noise $Z_v(t)$ continues to evolve in a purely imaginary manner.
\end{remark}

The above system can be formulated as a \textit{pathwise} fixed-point problem. Given a realization of the graph $G$ and the driving noises $(Z_v)_{v\in V}$, we define the operator $\Gamma$, which maps a set-valued process $D=(D_t)_{t\ge 0}$ to another set-valued process $\Gamma[D]=(\Gamma[D]_t)_{t\ge 0}$ via
\begin{equation*}\begin{aligned}
\Gamma[D]_t:=\Big\{v\in V:\,\inf_{s\in[0,t]}\Big(x_v+Z_v(t)-\sum_{u\in N_G^-(v)}c_{uv}\bone_{\{u\in D_t\}}\Big)\leq0\Big\}.
\end{aligned}\end{equation*}
Then $(X,D)$ is a solution to the system if and only if equation~\eqref{eq:1.1} holds and $D$ satisfies the fixed-point condition $\Gamma[D]=D$.

We first point out that equations \eqref{eq:1.1} do not uniquely pin down a dynamic, even in the simplest case:
\begin{example}
Let $G:=\Big(V=\{1,2\},E=\{(1,2),(2,1)\}\Big)$, $x_1=x_2=1$ and $c_{12}=c_{21}=1$. Then,
\begin{enumerate}[label=(\alph*)]
    \item there exists a solution $(X,D)$ such that $D_t=\emptyset$ for all $t<\tau$, where $\tau:=\inf\{t\ge0:\,Z_1(t)\leq-1\,\,\text{or}\,\,Z_2(t)\leq-1\}$.
    \item Let $\tilde D_t=\{1,2\}$ for all $t\ge0$ and $\tilde X_i(t)= Z_i(t)$, $t\ge0$ for $i=1,2$.
Then $(\tilde X, \tilde D)$ is also a solution.
\end{enumerate}
\end{example}

The issue with the second pathological solution $(\tilde X,\tilde D)$ is that the banks default \emph{when they do not have to}. This leads to a self-justifying group of defaults, where each unnecessary default is justified by the others.  To exclude such behaviors, we would like to seek solutions that satisfy the following \emph{physicality condition}.

\begin{definition}\label{def:physical}
A solution $(X, D)$ is said to be \emph{physical}, if 
\begin{enumerate}[label=(\alph*)]
    \item the map $t\mapsto D_t$ is right-continuous, and
    \item for any $t$ such that $D_t\neq D_{t-}$, it holds that $D_t=D_t^{(\infty)}$, where the latter is given by the following iterative construction:
    \begin{equation*}\begin{aligned}
    D_t^{(0)}&:=D_{t-}\cup\Big\{v\in V:\,x_v+Z_v(t)-\sum_{u\in N_G^-(v)}c_{uv}\bone_{\{u\in D_{t-}\}}\leq0\Big\},\\
    D_t^{(N+1)}&:=D_t^{(N)}\cup\Big\{v\in V:\,x_v+Z_v(t)-\sum_{u\in N_G^-(v)}c_{uv}\bone_{\{u\in D_t^{(N)}\}}\leq0\Big\},\quad N\ge0,\\
    D_t^{(\infty)}&:=\bigcup_{N=0}^\infty D_t^{(N)}.
    \end{aligned}\end{equation*}
\end{enumerate}
\end{definition}
\begin{remark}
\begin{itemize}
    \item[(a)] In the setting of systemic risks, the default cascade process $(D_t^{(N)})_{N\ge0}$ is the same as the one studied by Amini, Cont and Minca in \cite{amini2016resilience}. The idea of obtaining the smallest default set by an iteration scheme can be dated back to the \emph{fictitious default algorithm} proposed by Eisenberg and Noe in \cite{eisenberg2001systemic}. 
    \item[(b)] Physical solutions describe the systems in which the defaults can be fully ordered and attributed. Indeed, as $t\mapsto D_t$ is a right continuous set valued process, all the increments come from the left jumps $D_t\setminus D_{t-}$, which has a hierarchical structure characterized by the sequence $(D_t^{(N)})_{N\ge0}$. This hierarchical structure will be further discussed and exploited in Section \ref{sec:Trees}. 
    \item[(c)] At the initial time $t=0$, we do not rule out potential discontinuities of the system (such as the occurrence of defaults), hence we need to extend the time range for the dynamics to $[-1,\infty)$ by setting 
    \begin{equation*}\begin{aligned}
    X_v(t)=x_v,\quad D_t=\emptyset,\quad Z_v(t)=0,\quad t\in[-1,0).    
    \end{aligned}\end{equation*}
    With this extension, it becomes meaningful to consider the left limits of the dynamics at $t=0$, which encode the initial conditions for the dynamics. Physical solution $(X, D)$, if it exists, will have all of its paths $X_v(\cdot)$ and $\bone_{\{v\in D_\cdot\}}$ belong to the space $\cD:=D([-1,\infty))$. A brief introduction to the space $\cD$ as well as the $M_1$ topology on it is given in Appendix \ref{sec:cD}.
\end{itemize} 

\end{remark}

Intuitively, at times of unavoidable defaults, physical solutions have \emph{minimal} jumps. This observation leads us to consider the so-called \emph{minimal solutions}, which can be obtained by a straightforward iteration argument leveraging the monotonicity of  the map $\Gamma$.
\begin{definition}
A solution $(X,D)$ is said to be \emph{minimal}, if for any other solution $(\tilde X,\tilde D)$ it holds that
\begin{equation*}\begin{aligned}
D_t\subset\tilde D_t,\quad\forall\, t\ge0. 
\end{aligned}\end{equation*}
\end{definition}

It follows immediately from the definition that the minimal solution, if it exists, must be unique.The idea of characterizing physical solutions through minimal solutions goes back to \cite{cuchiero2020propagation}.

Establishing the well-posedness of the system becomes challenging when the underlying graph $G$ is infinite. First, if the noise processes $(Z_v)_{v\in V}$ are i.i.d., the zero-hitting times are typically dense in the time axis $(0,\infty)$. As a result, the system may lack a strict separation between continuous evolution and jump regimes. Second, without further assumptions,  stability cannot be guaranteed even on arbitrarily short time intervals, as the following example demonstrates.

\begin{example}\label{ex:Minimal_Not_Physical}
We take $V:=\NN=\{0,1,2,...\}$ and $E:=\{(n,n+1):\,n\in\NN\}$ with $c_{n+1,n}=1$. Let $x_n:=\frac{1}{2^{n+1}}$ for all $n\ge0$ and let $(Z_n=B_n)_{n\ge0}$ be independent standard Brownian motions. Then any solution $(X,D)$ to equations \eqref{eq:1.1} satisfies $D_{0+}=V=\NN$ with probability $1$. In particular, there is no physical solution to equations \eqref{eq:1.1} and with the initial conditions given above.
\end{example}

To ensure the well-posedness of the system, we introduce a set of \emph{robustness} conditions on the problem configuration.
\begin{definition}
The problem configuration $(G,c,x,Z)$ is said to drive \emph{$\delta$-robust systems} if, with probability one, it holds that for every $t\ge0$, the set
\begin{equation*}\begin{aligned}
\Big\{v\in V:\exists s\in[t,t+\delta],\text{ s.t. }x_v+Z_v(s-)\ge0\quad\text{and} \quad x_v+Z_v(s)\leq\sum_{u\in N_G^-(v)}c_{uv}\Big\}\,
\end{aligned}\end{equation*}
contains no infinite weakly connected component. The problem configuration $(G,c,x,Z)$ is said to drive \emph{robust systems} if it drives $\delta$-robust systems with $\delta=0$.
\end{definition}

A collection of sufficient conditions for $\delta$-robustness are provided in Lemma \ref{lem:robust1}. As it turns out, $\delta$-robustness is sufficient for the well-posedness of physical solutions to equations~\eqref{eq:1.1}. Before stating our main results, we state the general assumptions imposed on the driving noise component $Z$ throughout the paper:
\begin{assumption}\label{ass:Main}
\begin{enumerate}[label=(\alph*)]
    \item $Z$ has no upward jumps:
    \begin{equation*}
    Z(t)-Z(t-)\leq0,\quad\forall t\ge0.    
    \end{equation*}
    \item $Z$ satisfies the \emph{downward crossing property}: for any fixed stopping time $\tau$,
    \begin{equation*}\begin{aligned}
    \PP\Big[\forall h>0,\,\inf_{s\in[0,h]}(Z(\tau+s)-Z(\tau))<0\Big]=1.    
    \end{aligned}\end{equation*}
\end{enumerate}
\end{assumption}
\begin{remark}
The functional that takes a càdlàg process to its zero-hitting time is continuous at those processes satisfying the downward crossing property (see e.g. \cite[Proposition 5.8]{DELARUE2015Spike}). Formally speaking, any càdlàg stochastic process with a Brownian component satisfies the above downward crossing property.  The scope of financial models encompassed by our framework is discussed further in Lemma~\ref{lem:robust1}, Remark~\ref{rem:robust1}, and Section~\ref{sec:MajorBanks}.
\end{remark}
\begin{theorem}[Well-Posedness]\label{thm-main-1}
For each problem configuration $(G,c,x,Z)\in\cG_*[\RR\times\cD]$ that drives $\delta$-robust systems, there exists a unique physical solution to the equations \eqref{eq:1.1}. Moreover, this solution coincides with the minimal solution. 
\end{theorem}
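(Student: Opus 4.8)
The plan is to obtain the physical solution as the \emph{minimal} solution, constructed by monotone iteration of $\Gamma$, and then to show that $\delta$-robustness is exactly what makes this minimal solution physical and what forces every physical solution to coincide with it. Fix a realization of $(G,c,x)$ and of the independent Brownian motions $(B_v)_{v\in V}$. First I would record that $\Gamma$ is monotone for the pointwise-inclusion order on set-valued processes and, since default is absorbing, maps non-decreasing processes to non-decreasing processes. Starting from the process $D^{[0]}$ identically equal to $\emptyset$ and iterating $D^{[n+1]}:=\Gamma[D^{[n]}]$ gives an increasing sequence; set $D^*_t:=\bigcup_{n\ge 0}D^{[n]}_t$. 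Since $G$ is locally finite, every in-neighbourhood $N_G^-(v)$ is finite, so in the expression defining $\Gamma$ the finite sum may be exchanged with the supremum over $n$ and the infimum over $s$; this makes $D^*$ a pre-fixed point of $\Gamma$, and a routine monotone induction shows $D^{[n]}\subseteq\tilde D$ for every $n$ and every solution $\tilde D$, whence $D^*\subseteq\tilde D$. Thus, once $D^*$ is known to be a genuine solution, it is automatically the unique minimal one.

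The second step is to upgrade ``pre-fixed point'' to the identity $\Gamma[D^*]=D^*$ and to verify the two clauses of Definition~\ref{def:physical}. The engine here is a structural lemma: under $\delta$-robustness the default cascade triggered at any time $t$ is confined to the finite weakly connected components of the at-risk set from the definition of $\delta$-robustness, hence terminates after finitely many steps. This single fact simultaneously rules out the borderline case in which the relevant running infimum reaches $0$ only in the limit $n\to\infty$ (yielding $\Gamma[D^*]=D^*$), shows that the physicality iteration $(D^{(N)}_t)_N$ stabilizes to $D^{(\infty)}_t$, and --- applied on the windows $[t,t+\delta]$ vertex by vertex --- gives right-continuity of $t\mapsto D^*_t$. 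The jump identity $D^*_t=D^{(\infty)}_t$ at a jump time $t$ then splits into two inclusions: $D^{(\infty)}_t\subseteq D^*_t$ follows by iterating $\Gamma$ at time $t$, and $D^*_t\subseteq D^{(\infty)}_t$ because $D^*$ is realized as the increasing limit of $(\Gamma^n[\emptyset])_n$, so the jump $D^*_t\setminus D^*_{t-}$ is built up layer by layer in precisely the manner of the cascade $(D^{(N)}_t)_N$.

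For uniqueness, I would take an arbitrary physical solution $(X^{G,x},D)$, note $D^*\subseteq D$ by minimality, and prove $D\subseteq D^*$ by a first-disagreement-time argument. Let $\sigma:=\inf\{t\ge 0:\,D_t\neq D^*_t\}$ and suppose $\sigma<\infty$. On $[0,\sigma)$ the two processes agree, hence so do the corresponding trajectories $X^{G,x}$, hence so do the cascades $(D^{(N)}_\sigma)_N$ and thus $D^{(\infty)}_\sigma$; physicality of $D$ then gives $D_\sigma=D^{(\infty)}_\sigma$, while minimality of $D^*$ together with $\Gamma[D^*]=D^*$ gives $D^{(\infty)}_\sigma\subseteq D^*_\sigma\subseteq D_\sigma$, so $D_\sigma=D^*_\sigma$ and $\sigma$ itself is not a disagreement time. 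A short-window estimate, again fed by the finiteness of cascades under $\delta$-robustness, then shows the two processes continue to agree on a right-neighbourhood of $\sigma$ (within each finite component the finitely many cascade events can be ordered and both physical solutions make the same choices), contradicting $\sigma=\inf\{t:\,D_t\neq D^*_t\}$. Hence the disagreement set is empty, $D=D^*$, and the proof is complete.

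The hard part will be the structural lemma turning the static, percolation-type hypothesis of $\delta$-robustness into the dynamic statements used throughout: that instantaneous cascades are finite, that $(D^{(N)}_t)_N$ stabilizes, and that disagreements between solutions cannot accumulate in time. I expect this to require quantitative control over short time windows --- essentially a comparison of the default dynamics with a dynamic percolation process on $G$, along the lines the abstract alludes to --- together with careful bookkeeping of the closed-versus-open boundary behaviour of the hitting-time functionals (so that running infima equal to $0$ are treated consistently), which is exactly what separates a genuine fixed point of $\Gamma$ from a mere pre-fixed point.
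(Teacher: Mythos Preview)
Your outline matches the paper's proof closely: the minimal solution is built by iterating $\Gamma$ from $\emptyset$, shown to be physical under the robustness hypothesis, and uniqueness follows from a first-disagreement-time argument. Two points of precision are worth flagging. First, the fixed-point identity $\Gamma[D^*]=D^*$ already follows from local finiteness alone (your own exchange-of-limits argument with the finite in-neighbourhood gives both inclusions), so robustness is not needed there---it enters only for right-continuity and the jump identity. Second, in the uniqueness step the paper makes your ``short-window estimate'' concrete not by a forward induction within finite fragile components, but by tracing \emph{backwards}: from any $v_0\in D_{t_0+\delta}\setminus \underline D_{t_0+\delta}$ one finds $v_1\in N_G^-(v_0)\cap(D_{t_0+\delta}\setminus \underline D_{t_0+\delta})$ that defaults strictly earlier in the lexicographic order $(\tau_v,k_v)$, and iterating produces an infinite path inside $\bigcup_{s\in[t_0,t_0+\delta]}F_s$, contradicting $\delta$-robustness directly. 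Your forward version can likely be made to work, but the backward-chain argument is cleaner because it uses only the physicality ordering and sidesteps any bookkeeping about the outer boundary of a fragile component.
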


After establishing well-posedness, we proceed to answer the following two approximation questions:
\begin{itemize}
    \item Suppose an infinite graph can be approximated by a sequence of finite graphs in a suitable sense. Does it imply the convergence of physical solutions?
    \item If, in addition, the involved graphs have symmetric structures, can we analyze the empirical distribution of the solution paths and the default times
    \begin{equation*}\begin{aligned}
    \mu^n:=\frac{1}{|G_n|}\sum_{v\in G_n}\delta_{(X_v^n,\tau_v^n)}    
    \end{aligned}\end{equation*}
    by focusing on a \emph{representative} vertex?
\end{itemize}

The paper \cite{lacker2022localweakconvergencesparse} by Lacker, Ramanan and Wu provides a theoretical framework for studying interacting particle systems on large sparse graphs with local interaction, which is well-suited for answering the above two approximation questions. In this framework, both the problem configurations and the solutions are encoded by (random) \emph{marked graphs}, on the space of which the topology of local convergence can be endowed with. It turns out that \emph{local convergence} is a proper notion of the approximation in the first question, and \emph{local weak convergence} provides a natural characterization of symmetry for the second question. These will be explained in more detail in Section \ref{sec:Local_Conv} and Section \ref{sec:Local_Weak}. One of our main contributions is to extend the general theory presented in \cite{lacker2022localweakconvergencesparse} to particle systems with singular interaction via hitting times.

\begin{theorem}[Convergence of Physical Solutions]\label{thm-main-2}
Let $(G_n,c^n,x^n,Z^n)$ be a sequence of problem configurations that drive robust systems and that $\cL(G_n,c^n,x^n,Z^n)\to\cL(G,c,x,Z)$ in $\cP(\cG_*[\RR\times\cD])$ as $n\to\infty$, where the limit configuration $(G,c,x,Z)$ drives $\delta$-robust systems. Then  $
\mathcal{L}(G_n, X^n, D^n) \to \mathcal{L}(G, X, D) \, \text{in } \mathcal{P}(\mathcal{G}_*[\mathcal{D}^2]),$
where $ (X^n, D^n) $ and $ (X, D) $ denote the unique physical solutions associated with $(G_n,c^n,x^n,Z^n)$ and $(G,c,x,Z)$, respectively.
\end{theorem}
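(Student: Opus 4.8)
The plan is to follow the local-weak-convergence strategy of \cite{lacker2022localweakconvergencesparse}, with the finiteness of default cascades under robustness playing the role that a finite interaction range plays in their setting. First I would invoke the Skorokhod representation theorem on the Polish space $\cG_*[\RR]$ to realize all the configurations $(G_n,c^n,x^n)$ and the limit $(G,c,x)$ on one probability space so that $(G_n,c^n,x^n)\to(G,c,x)$ \emph{almost surely} in the local topology: for a.e.\ realization and every radius $r$ there is an $n_0$ such that for $n\ge n_0$ the $r$-ball around the root of $G_n$ is marked-graph isomorphic to that of $G$, with the $\RR$-marks ($x$ and $c$) converging, and these isomorphisms can be taken consistent in $r$ for $n$ large. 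On this space I would fix one i.i.d.\ family of Brownian motions indexed by the vertices of $G$, independent of the configurations, and transport it to each $G_n$ through these isomorphisms; since this transport preserves the law of the resulting solution marked graph, the law built from the transported noises equals $\cL(G_n,X^n,D^n)$, and it suffices to prove that these solution marked graphs converge a.s.\ in the local topology.

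The second and central step is a \emph{stability-of-cascades} lemma. Fix a finite horizon $T$ and a radius $R$. Using the $\delta$-robustness of the limit I would show that, a.s., the $\delta$-fattened dangerous set $\{v:\exists s\in[t,t+\delta],\ 0\le x_v+B_v(s)\le\sum_{u\in N_G^-(v)}c_{uv}\}$ has only finite weakly connected components, uniformly over $t\in[0,T]$, in the sense that the component of any fixed vertex stays inside a random ball $B_\rho(\mathrm{root})$ with $\rho=\rho(T,R)<\infty$. Because the $\RR$-marks and the transported Brownian paths on $B_\rho(\mathrm{root})$ converge uniformly on $[0,T]$, for $n$ large the (unfattened) dangerous sets of $G_n$ restricted to $B_\rho(\mathrm{root})$ are sandwiched inside the $\delta$-fattened one of $G$; hence the cascade clusters that can influence $B_R(\mathrm{root})$ up to time $T$ are contained in $B_\rho(\mathrm{root})$ for $G$ and for all large $n$ simultaneously. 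This reduces the comparison to a \emph{finite} marked graph carrying finitely many Brownian paths.

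Third, on this common finite neighborhood I would identify the physical solution with the minimal solution via Theorem~\ref{thm-main-1} (applicable to the $\delta$-robust limit and to each robust $G_n$) and realize it as the monotone iterate of $\Gamma$. I would then argue that on a finite system the minimal solution is a continuous map of the data $(c,x,(B_v))$ at configurations where $\delta$-robustness holds: the successive default times are first times at which a connected block of vertices is simultaneously within reach of $0$, and the $\delta$-margin excludes the degenerate case where a path merely touches a threshold without crossing, so these times are stable under small perturbations of the paths and the marks. The induced jumps of $X^n$ and of the indicator processes $\bone_{\{v\in D^n_\cdot\}}$ are monotone (losses are only added), so convergence of jump sizes and locations gives convergence in the $M_1$ topology on $\cD$. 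Letting $n\to\infty$ with the finite neighborhood fixed, then $R\to\infty$, yields a.s.\ local convergence of the solution marked graphs, and hence the theorem.

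The step I expect to be the main obstacle is the second one: quantitatively trapping the cascade clusters of \emph{all} the approximating systems inside one finite neighborhood. A priori a cascade in $G_n$ could travel along edges that ``pinch off'' in the local limit and produce spuriously large default sets; the $\delta$-buffer in the robustness hypothesis on the limit is precisely the slack that absorbs the discrepancy between $G_n$ and $G$ --- in both graph structure and Brownian paths --- and forces the $G_n$-cascades to respect the same finite clusters. Obtaining uniformity in $n$ and in $t\in[0,T]$ here, rather than merely pointwise control, is the crux; once the problem is localized to a fixed finite graph the remaining convergence is a routine continuity argument for the $M_1$ topology.
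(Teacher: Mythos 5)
Your high-level strategy --- Skorokhod representation, localization of cascades via $\delta$-robustness, then a finite-dimensional continuity argument --- parallels the structure of the paper's proof, and you correctly flag the localization as the crux. But the proposal has gaps precisely there, and the finishing move is not as routine as you suggest.

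The localization in the paper is Lemma~\ref{lma:finitetree} (the default tree lemma), a top-down induction over the $\delta$-fattened fragile clusters $F_{[m\delta,(m+1)\delta]}^l$, chained backward in time through the recursively defined families $\cL_m$. Your proposed sandwiching of ``dangerous sets'' misses two things. First, it only compares states at a fixed time, whereas a cascade at time $t$ may be triggered by an earlier cascade in a \emph{different} cluster at time $t'<t$, so the finite set containing the cascade must be built by chaining clusters across time windows, not merely by containing a single cluster. Second, the sandwiching you describe is between a state-space perturbation ($\varepsilon$-thickening of $[0,w_v]$) and a time-window thickening $\bigcup_{s\in[t,t+\delta]}F_s$; these are different objects, and reconciling them is exactly why Assumption~\ref{asm:main} carries both an $\eta$ state-margin and a $\delta$ time-margin. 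Relatedly, your claim that the ``$\delta$-margin excludes the degenerate case where a path merely touches a threshold'' is a misattribution: $\delta$-robustness is about the absence of infinite fragile clusters, while the nondegeneracy of crossings is a separate Brownian property (the crossing property), used in the paper via \cite[Lemma 5.4]{cuchiero2020propagation}, together with the almost-sure statement (Lemma~3.10 in the text) that at most one vertex is critical at any time.

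The more serious gap is the ``restrict to $B_\rho$ and solve'' reduction in your Step~3. Removing edges from $B_\rho$ to its complement can only delay defaults, so the minimal solution of the \emph{restricted} system may be strictly smaller than the restriction to $B_\rho$ of the minimal solution on $G$; to equate them, one needs precisely that the default trees $\cT(G,v_0)$ and $\cT(G_n,v_0)$ all sit inside a common finite $V_0$, which is the content of Lemma~\ref{lma:finitetree}. The paper sidesteps this reduction entirely: it establishes tightness of $\cL(G_n,c^n,x^n,B^n,\underline X^n,\underline D^n)$ using $M_1$-compactness of monotone paths (Theorem~\ref{thm:D_Compact_2} and Lemma~\ref{lem:MarkedGraph_Comp}), extracts an a.s.\ convergent subsequence, shows the limit is a solution (Step~2, via the crossing property), and then proves minimality by contradiction (Step~3): a vertex $v_0$ with $\underline\tau_{v_0}>\tau_{v_0}$ would force an infinite descending chain of such vertices inside the finite default tree $\cT(G_n,v_0)$. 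This identification-by-contradiction route avoids ever having to show that a truncated finite system reproduces the full solution, which is the piece your sketch leaves unresolved.
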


\begin{theorem}[Convergence of Empirical Measures]\label{thm-main-3}
Let $(G_n,c^n,x^n,Z^n)$ be a sequence of problem configurations such that each each $G_n$ is finite, and suppose that $(G_n,c^n,x^n,Z^n)$ converges in probability in the local weak sense to a random element $(G,c,x,Z)\in \cG_*[\RR\times\cD]$, which drives $\delta$-robust systems. Then the corresponding solutions $(G_n,X^n,D^n)$ converges in probability in the local weak sense to $(G,X,D)$. In particular, the empirical measure $\mu^n$ converges in probability to $\cL(X_o,\tau_o)$, where $o$ is the root of $G$. 
\end{theorem}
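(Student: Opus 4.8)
\emph{Proof plan.} The plan is to obtain Theorem~\ref{thm-main-3} from Theorem~\ref{thm-main-2} by promoting convergence of laws to convergence in probability in the local weak sense, via the ``one-root/two-root'' criterion: for finite random marked graphs $H_n$ and $\nu\in\cP(\cG_*)$, one has $H_n\to\nu$ in probability in the local weak sense if and only if, for $o_n,o_n^1,o_n^2$ independent uniformly random roots, $\cL([H_n,o_n])\to\nu$ and $\cL([H_n,o_n^1],[H_n,o_n^2])\to\nu\otimes\nu$ weakly --- this is just the assertion that every test integral $\langle U_{H_n},f\rangle$ converges in $L^2$, checked through its first two moments. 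So I would first restate the hypothesis as these two weak convergences for the configurations $(G_n,c^n,x^n)$, and then establish their analogues for $(G_n,X^n,D^n)$.

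The preliminary step is to make the noise part of the data. I would attach to each vertex $v$ its driving path $B_v$ (extended to $[-1,\infty)$ as in the remark after Definition~\ref{def:physical}) as an additional $\cD$-valued mark, producing augmented configurations $\widehat G_n$; since conditionally on $(G_n,c^n,x^n)$ the $B_v$ are i.i.d., and i.i.d.\ marking preserves local weak convergence in probability (a standard fact, including that the marks on two far-apart queried components become independent in the limit), the one-root and two-root convergences persist for $\widehat G_n$, with limit $\widehat G$. The payoff is that the physical solution becomes a deterministic functional of the rooted augmented graph: distinct connected components do not interact in \eqref{eq:1.1}--\eqref{eq:1.2}, so by Theorem~\ref{thm-main-1} the physical (equivalently minimal) solution of $G$, restricted to the root's component $C_o$, coincides with the physical solution of $C_o$ alone; this defines a map $\Psi$ on $\delta$-robust rooted augmented configurations, returning the rooted marked graph that additionally carries the paths $(X_v,\bone_{\{v\in D_\cdot\}})$ on $C_o$.

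The core input is that $\Psi$ is continuous at every $\delta$-robust configuration along sequences of robust configurations --- this is precisely what the proof of Theorem~\ref{thm-main-2} yields --- so $\Psi$ is continuous $\cL(\widehat G)$-almost surely, the limit being supported on $\delta$-robust configurations and finite graphs being automatically robust. Granting this, the one-root convergence for the solutions is the continuous mapping theorem applied to $\cL([\widehat G_n,o_n])\to\cL(\widehat G)$ (equivalently, it is Theorem~\ref{thm-main-2} for the uniformly rooted configurations), and the two-root convergence follows from the continuous mapping theorem applied to $\Psi\times\Psi$, which is continuous off a $\cL(\widehat G)^{\otimes2}$-null set, together with $\cL([\widehat G_n,o_n^1],[\widehat G_n,o_n^2])\to\cL(\widehat G)\otimes\cL(\widehat G)$. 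The criterion then delivers $(G_n,X^n,D^n)\to(G,X,D)$ in probability in the local weak sense, and the statement about $\mu^n$ is immediate: the evaluation $[H,o]\mapsto(\text{first }\cD\text{-coordinate of the mark at }o)$ is continuous on $\cG_*[\cD^2]$, so pushing $U_{(G_n,X^n,D^n)}$ forward under it gives $\mu^n\to\cL(X_o)$ in probability in $\cP(\cD)$.

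I expect the real obstacle to be conceptual and to sit inside the continuity of $\Psi$ rather than in the reduction above: since defaults cascade, it is not obvious a priori that the asymptotic independence of two far-apart roots survives the solution operator, because the solution near one root could in principle depend on the configuration near the other. What rules this out is exactly the continuity of $\Psi$ at $\delta$-robust points --- i.e.\ that under $\delta$-robustness the solution in a neighborhood of the root is pinned down, up to arbitrarily small $M_1$-error, by the configuration in a large but finite ball, with the error controlled by the percolation-type bounds behind Assumption~\ref{asm:main}. Beyond invoking that, the remaining work is routine bookkeeping: verifying the one-root/two-root criterion, checking that i.i.d.\ Brownian marking preserves local weak convergence in probability, and confirming that the continuity set of $\Psi$ is $\cL(\widehat G)$-full.
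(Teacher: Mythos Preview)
Your approach is correct, but it takes a slightly longer route than the paper's. The paper observes that the hypothesis --- local weak convergence in probability of $(G_n,c^n,x^n,B^n)$ --- is literally the statement that the random empirical measure $U_n:=|G_n|^{-1}\sum_{v\in G_n}\delta_{C_v(G_n,c^n,x^n,B^n)}$ converges in probability, as a random element of the Polish space $\cP(\cG_*[\RR\times\cC])$, to the deterministic law $\cL(G,c,x,B)$. It then pushes $U_n$ forward through the map $\Phi=\varphi_\#$ (your $\Psi$, lifted to laws): since $\varphi$ sends each $C_v(G_n,c^n,x^n,B^n)$ to $C_v(G_n,X^n,D^n)$, one has $\Phi(U_n)=|G_n|^{-1}\sum_v\delta_{C_v(G_n,X^n,D^n)}$ exactly, and since $\Phi$ is continuous at $\cL(G,c,x,B)$ by Theorem~\ref{thm:ConvMinSol}, the ordinary continuous mapping theorem for convergence in probability to a constant finishes the argument in one line. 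Your detour through the one-root/two-root moment criterion is a valid equivalent reformulation --- it is essentially how one would \emph{prove} that convergence in probability of $U_n$ in $\cP(\cG_*)$ is preserved by continuous pushforwards --- but it can be bypassed by working at the level of random probability measures from the outset. Both arguments rest on the same two substantive inputs you correctly isolate: that i.i.d.\ Brownian marking preserves local weak convergence in probability (the paper defers this to \cite[Corollary~2.16]{lacker2022localweakconvergencesparse}), and that the pathwise solution map is continuous at $\delta$-robust configurations along robust sequences, the approximants being automatically robust since they are finite.
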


The main technical challenge in proving Theorems \ref{thm-main-1} and \ref{thm-main-2} is to qualitatively control the “propagation speed’’ of default cascades. Although the interactions in the system \eqref{eq:1.1} are local, it remains possible in principle for a cascade to spread instantaneously across a large portion of the network, exacerbating the discontinuity of the system under consideration. This phenomenon does not arise in \cite{lacker2022localweakconvergencesparse} and therefore falls outside the scope of the methods developed there. To overcome this difficulty,  our approach is to analyze carefully the causal structure of defaults and thereby re-establish locality of the dynamics, which is carried out in Section~\ref{sec:Trees} using tools from dynamic percolation theory. This step quantifies how the network topology constrains the growth of the default sets and forms the core technical contribution of the paper. In contrast, the proof of Theorem \ref{thm-main-3} is more direct than that of \cite[Theorem 3.7]{lacker2022localweakconvergencesparse}, since the noise processes $(Z_v)_{v\in V}$ may be incorporated into the inputs of the dynamics.

The remainder of this section provides a brief review of related literature. In Section~\ref{sec:def}, we introduce the main definitions and notations used throughout the paper. Section~\ref{sec:three} presents the main results and their proofs. The proof of Theorem~\ref{thm-main-1} is established in Section \ref{sec:physical} and \ref{sec:robustness}. Theorem~\ref{thm-main-2} is proved in Section~\ref{sec:proof of main2}, and Theorem~\ref{thm-main-3} in Section~\ref{sec:proof of main3}. In Section~\ref{sec:delayedmodel}, we demonstrate the connection between the model studied in this paper and other models of systemic risks in which losses are not realized immediately. In Section~\ref{sec:MajorBanks}, we discuss potential generalization of our framework to include both major banks and local banks. Additional technical proofs are provided in the Appendices.

\subsection{Related literature.}
Modeling the propagation of defaults in financial networks has emerged as a foundational problem in the study of systemic risk. Models for default cascades aim to capture how the failure of one financial institution can trigger a sequence of failures throughout the network—a process akin to contagion. A foundational framework is laid by the Eisenberg–Noe clearing payment model \cite{eisenberg2001systemic}, to which the idea of analyzing the default set via a hierarchical structure can be dated back to. 

A central question that remains challenging is how the structure of financial networks affects their resilience to noises and shocks. Recent refinements in economic and network theory, including \cite{acemoglu2015systemic} and \cite{battiston2012debt}, further demonstrated how interconnections can have both stabilizing and destabilizing effects. The pioneering work \cite{amini2016resilience} proposed a structural model of financial contagion on random graphs, introducing a quantitative measure for resilience in large, sparsely connected financial networks. Their work provides explicit conditions under which a network is resilient to small shocks—meaning a vanishing fraction of defaults in the limit of large system size—based on the degree distribution and exposure profile. By characterizing threshold behavior for systemic events in terms of the fixed-point structure of default probabilities, their model offers both analytical tractability and valuable insight into how network topology governs the likelihood of large-scale cascades. We also refer the reader to \cite{amini2016resilience} for a overview of previous researches in this direction. Partially due to the consideration of analytical tractability, the models that have been most studied are either static or have discrete-time dynamics and are mostly set up on finite networks. 

To manage the complexity of analyzing continuous dynamics on large-scale financial networks, recent work, including \cite{capponi2015systemic,carmona2013mean}, has adopted \textit{mean-field} formulations which aim to describe the macroscopic behavior of the system as the number of nodes tends to infinity by focusing on a representative particle, which is a phenomenon referred to as propagation of chaos, under assumptions of complete symmetry and weak interactions. This approach has proven fruitful in both stochastic control and game-theoretic settings. The pioneering works \cite{delarue2015global,DELARUE2015Spike} introduced the instantaneous contagion and the notion of physical solutions to mean-field particle systems and established the corresponding propagation of chaos. Also revealed are their deep connections, as a probabilistic method, to the supercooled Stefan problem, a singular free boundary problem, the well-posedness of which has remained challenging for decades. Following this framework, \cite{dns} established the global-in-time well-posedness of physical solutions and studied their regularities, and \cite{cuchiero2020propagation} proposed the notion of minimal solutions, pointed out their connections to physical solutions, and established the propagation of minimality. These models and their variants have been further developed and applied to the study of financial networks and neuronal networks as in \cite{hambly_spde_2019,NaSh1,NaSh2,Hambly2022Elastic,baker2025localtime,hambly2023contagiouscommon,hambly2023controlcontagion,ledger2024transport,baker2024twophase,Cuchiero2024bailout,erhan2025systemic}.

While mean-field models have proven instrumental in describing the asymptotic behavior of large populations, they are often based on assumptions of full homogeneity and dense connectivity. The articles \cite{NaSh2,Feinstein2023heterogeneous} incorporated heterogeneity into the mean-field framework by constructing and analyzing multi-type particle systems, with the interactions between the types given by probability kernels. Within the types, the particles are still exchangeably distributed, densely connected, and each individual particle interacts with the whole distribution and has negligible effect on the rest of the system. Such assumptions can be too restrictive in the study of financial networks, which are typically sparsely connected and exhibit local heterogeneous interactions driven by direct bilateral exposures, as illustrated by empirical evidence given in \cite{Boss2004network,Cont_Moussa_Santos_2013,Craig2014network}. To address this gap, we adopt the general theory developed in \cite{lacker2022localweakconvergencesparse} of the local convergence for networks of interacting stochastic processes, which is particularly well-suited for modeling cascades in networks, where contagion spreads through direct neighboring exposure rather than global interaction. It is worth emphasizing that this form of large sparse graph limits is not covered by the theory of mean-field or graphon limits, as the correlation between the state processes of any fixed pair of vertices will typically not vanish in the limit.


 
In summary, this paper contributes to the literature on mathematical finance by developing a novel continuous-time framework for analyzing systemic risk in large, sparsely connected financial networks, with particular emphasis on local interactions and instantaneous default cascades. From the perspective of interacting particle systems, we extend the general convergence results of \cite{lacker2022localweakconvergencesparse} to models with singular interactions arising from hitting times. A key element of the analysis is the use of percolation theory to rigorously quantify the relationship between network topology and the growth of default sets, which constitutes an independent contribution of interest.

The present framework has several natural limitations. First, we model the health of a bank as a single real-valued process, without incorporating the detailed structure of its balance sheet. In practice, such structural features play an important role in determining the magnitude of losses borne by creditors in the event of default. Second, the network is taken to be static and each institution may default only once, whereas in reality the web of mutual exposures evolves over time, new institutions may enter, and defaulted ones are typically replaced. Finally, we do not account for the strategic behavior of institutions in shaping network structure and exposures, which would introduce an additional layer of complexity. Incorporating these features lies beyond the scope of the present work but represents an important direction for future study.

\section{Preliminaries and Notations}
\label{sec:def}
\subsection{Elementary notations}
For a random element $X$, let $\cL(X)$ denote its distribution measure. For any topological spaces $\cY,\cZ$, $C(\cY \to \cZ)$ is the set of all continuous maps 
$f:\cY \to \cZ$, $C(\cY):=C(\cY \to \RR)$, and $C_b(\cY)$ denotes the set of bounded continuous functions. We use $\cC:=C([-1,\infty))$ to denote the space of continuous functions defined on $[-1,\infty)$, endowed with the topology of uniform convergence on compact sets. For any subset $E$ of $\cY$, we denote the restriction of $f$ on $E$ by $f|_{E}$. The indicator function of a set (event) $A$ will be denoted by $\bone_A$.

We denote by $\cD:=D([-1,\infty))$ the space of functions $f:[-1,\infty)\to\RR$ that are right-continuous at all $t\in[-1,\infty)$ and have left limits at all $t\in(-1,\infty)$. The space $\cD$ will be endowed with the Skorokhod $M_1$ topology throughout the paper. A brief technical introduction to the $M_1$ topology can be found in Appendix \ref{sec:cD}.

\subsection{Local convergence of marked graphs}\label{sec:Local_Conv}
For this part, we follow the general framework developed in \cite{lacker2022localweakconvergencesparse}, with the main difference being that our graphs will be directed and weighted.
\subsubsection{Directed graphs}
A directed graph can be represented as a pair $G=(V,E)$, where $V$ is a set of vertices and $E\subset\{(u,v)\in V\times V:\,u\neq v\}$ is a set of (directed) edges, given as ordered pairs of vertices. In this paper, we will use $v\in V$ and $v\in G$ interchangeably. For a vertex $v$, the \emph{in-neighborhood} and \emph{out-neighborhood} are defined by
\begin{equation*}\begin{aligned}
N_G^-(v):=\{u\in V:\,(u,v)\in E\},\quad N_G^+(v):=\{u\in V:\,(v,u)\in E\}.     
\end{aligned}\end{equation*}
A directed graph $G$ naturally induces an undirected graph by ignoring the directions of all the edges. The (weak) distance $d_G(u,v)$ between vertices $u,v\in V$ is defined as the length of the shortest path connecting them in the induced undirected graph. The ball of radius $k$ centered at $v\in V$ is defined by 
$B_G(v,k):=\{u\in V:\,d_G(u,v)\leq k\}$.
With a slight abuse of notation, we also define the $k$-enlargement of a set $V_0\subset V$ as
\begin{equation*}\begin{aligned}
B_G(V_0,k):=\bigcup_{v\in V_0}B_G(v,k).
\end{aligned}\end{equation*}
The set of neighbors of a vertex $v$ is defined as $N_G(v):=B_G(v,1)\setminus\{v\}=N_G^-(v)\cup N_G^+(v)$. Similarly, for a subset $V_0\subset V$, the set of neighbors, or the \emph{outer boundary} is given by $\partial^{\mathrm{out}}V_0:=B_G(V_0,1)\setminus V_0$, consisting of all vertices 
 adjacent to at least one vertex in $V_0$, but not in $V_0$ themselves.
In this paper, we do not use any other distances on the directed graph $G$, so we omit the stress of ``weak" in the sequel.

Given a graph $G=(V,E)$ and a subset $V_0$ of its vertex set $V$, the \emph{induced subgraph} on $V_0$ is defined as $G|_{V_0}=(V_0,E\cap (V_0\times V_0))$. When the context is clear, we may refer to this subgraph simply by its vertex set $V_0$ to simplify notations.

\subsubsection{Rooted graphs, isomorphism and the space $\cG_*$} A \emph{rooted graph} $G=(V,E,o)$ is a graph $(V,E)$ with a distinguished vertex $o\in V$, called the \emph{root}. Two rooted graphs $G_i=(V_i,E_i,o_i)$, $i=1,2$ are \emph{isomorphic} if there exists a bijection such that $$
\varphi(o_1) = o_2 \quad \text{and} \quad (u, v) \in E_1 \ \Leftrightarrow\ (\varphi(u), \varphi(v)) \in E_2 \quad \text{for all } u, v \in V_1.
$$ We denote this by $G_1\cong G_2$, and denote by \( I(G_1, G_2) \) the set of all such isomorphisms.

Let $\cG_*$ denote the set of isomorphism classes of connected rooted graphs. A sequence $(G_n)_{n\ge0}$ is said to \emph{converges locally} to $G$ in $\cG_*$ if, for every $k\in\NN$, there exists $N_k\in \NN$ sufficiently large so that $B_{G_n}(o_n,k)\cong B_G(o,k)$ for all $n\ge N_k$. The following metric is compatible with local convergence and renders $\cG_*$ a complete and separable metric space:
\begin{equation*}\begin{aligned}
d_*(G,G'):=\sum_{k=1}^\infty\frac{1}{2^k}\bone_{\{B_G(o,k)\not\cong B_{G'}(o',k)\}}.    
\end{aligned}\end{equation*}
\begin{remark}
If a sequence $ (G_n)_{n \ge 1} $ converges locally to $ G $ in $ \mathcal{G}_* $, then for every $ k \in \mathbb{N} $, there exists $ N_k \in \mathbb{N} $ such that for all $ n \ge N_k $, the balls $ B_{G_n}(o_n, k) $ and $ B_G(o, k) $ are isomorphic as rooted graphs. In particular, we may re-label the vertex set $ V_n $ (via a choice of rooted graph isomorphism) so that
$B_{G_n}(o_n, k) = B_G(o, k)$
holds as subgraphs. In the sequel, we will use this identification whenever it simplifies notation and does not cause confusion.
\end{remark}

\subsubsection{Marked graphs and the space $\cG_*[\cY]$}
For a metric space $(\cY,d)$, a $\cY$-marked rooted weighted graph is a tuple $(G,c,y)$, where $G=(V,E,o)\in\cG_*$, $y=(y_v)_{v\in V}\in\cY^V$ is a vector of marks indexed by $V$, and $c=(c_{uv})_{(u,v)\in E}\in\RR_+^E$ is a vector of weights indexed by $E$. Isomorphisms between rooted weighted graphs are defined analogously to the unmarked case: two $\cY$-marked rooted weighted graphs $(G,c,y)$ and $(G',c',y')$ are \emph{isomorphic} if there exists an isomorphism $\varphi$ from $G$ to $G'$ such that $y_v=y_{\varphi(v)}'$ for any $v\in V$ and that $c_{uv}=c_{\varphi(u)\varphi(v)}'$ for any $u,v\in V$ such that $(u,v)\in E$. We denote by $\cG_*[\cY]$ the set of isomorphism classes of $\cY$-marked graphs. 

A sequence $(G_n,c^n,y^n)$ converges locally to $(G,c,y)$ in $\cG_*[\cY]$ if,  for any $k\in\NN$ and any $\varepsilon>0$, there exists $N_{k,\varepsilon}\in \NN$ sufficiently large so that, for all $n\ge N_{k,\varepsilon}$, $B_{G_n}(o_n,k)\cong B_G(o,k)$, and that there exists an isomorphism $\varphi$ from $B_G(o,k)$ to $B_{G_n}(o_n,k)$ such that, for every $v\in B_G(o,k)$, $d(y_v,y_{\varphi(v)}^n)<\varepsilon$, and $\sum_{u\in N_G^-(v)}|c_{uv}-c_{\varphi(u)\varphi(v)}^n|<\varepsilon$.

The following metric metrizes this convergence and renders $ \mathcal{G}_*[\mathcal{Y}] $ a Polish space whenever $ (\mathcal{Y}, d) $ is Polish:
\begin{equation*}\begin{aligned}
&\quad d_*((G,c,y),(G',c',y'))\\
&:=\sum_{k=1}^\infty 2^{-k}\Bigg(1\wedge\inf_{\varphi\in I(B_G(o,k),B_{G'}(o',k))}\max_{v\in B_G(o,k)}\Big(d(y_v,y_{\varphi(v)}')+\sum_{u\in N_G^-(v)}|c_{uv}-c_{\varphi(u)\varphi(v)}'|\Big)\Bigg).   
\end{aligned}\end{equation*}

In this paper, the metric space $(\cY,d)$ for the marks will always be Polish.

\begin{remark}
\begin{enumerate}[label=(\alph*)]
    \item When the weights are trivial (e.g., $c_{uv} = 1$ for all $(u, v) \in E$), we recover the setting of unweighted graphs as in \cite{lacker2022localweakconvergencesparse}. We will omit the $c$-component when the weights are trivial.

    \item From an abstract perspective, edge weights can be viewed as marks on the set of edges, and in principle, they could take values in a general metric space. However, we do not pursue this level of generality, as the edge weights in this paper are always taken to be non-negative real numbers.

    \item Following the convention in \cite{lacker2022localweakconvergencesparse}, we use vertex marks to represent the initial conditions $(x_v)_{v \in V}$, the driving noises $(Z_v)_{v \in V}$, and the solution trajectories $(X_v)_{v \in V}$.

\end{enumerate}
\end{remark}


    


\begin{lemma}\label{lem:MarkedGraph_ContMap}
Let $\cY$, $\cY'$ be metric spaces, and let  $f:\cY\to\cY'$ be a continuous map. Then the induced map
\begin{equation*}
\begin{aligned}
(G,c,y)\mapsto(G,c,f(y))      
\end{aligned}
\end{equation*}
is continuous from $\cG_*[\cY]$ to $\cG_*[\cY']$.
\end{lemma}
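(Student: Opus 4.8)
The plan is to unwind both sides to the definition of the metric $d_*$ on $\cG_*[\cY]$ and $\cG_*[\cY']$ and to exploit the fact that the induced map does not touch the underlying rooted graph $G$ or the weights $c$ at all, so the set of admissible isomorphisms $I(B_G(o,k),B_{G'}(o',k))$ is literally the same for the source marked graph and its image. Concretely, fix $(G,c,y)$ and a sequence $(G_n,c^n,y^n)\to(G,c,y)$ in $\cG_*[\cY]$; I want to show $(G_n,c^n,f(y^n))\to(G,c,f(y))$ in $\cG_*[\cY']$. For each fixed $k$, local convergence gives (for $n$ large) $B_{G_n}(o_n,k)\cong B_G(o,k)$ and an isomorphism $\varphi_n$ on the ball of radius $k$ realizing $\max_{v\in B_G(o,k)} \big(d(y_v,y^n_{\varphi_n(v)})+\sum_{u\in N_G^-(v)}|c_{uv}-c^n_{\varphi_n(u)\varphi_n(v)}|\big)\to 0$. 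The first summand tending to $0$ means $y^n_{\varphi_n(v)}\to y_v$ in $\cY$ for every $v$ in this finite ball, hence by continuity of $f$, $f(y^n)_{\varphi_n(v)} = f(y^n_{\varphi_n(v)})\to f(y_v) = f(y)_v$ in $\cY'$, i.e. $d'(f(y)_v, f(y^n)_{\varphi_n(v)})\to 0$; the weight summand is untouched and already tends to $0$.

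Therefore, using the \emph{same} $\varphi_n$ as a competitor in the infimum defining the $k$-th term of $d_*((G_n,c^n,f(y^n)),(G,c,f(y)))$, that $k$-th term is bounded by $1\wedge \max_{v\in B_G(o,k)}\big(d'(f(y)_v,f(y^n)_{\varphi_n(v)}) + \sum_{u\in N_G^-(v)} |c_{uv}-c^n_{\varphi_n(u)\varphi_n(v)}|\big)$, which tends to $0$ as $n\to\infty$ for each fixed $k$ by the previous paragraph (the ball being finite, the max is over finitely many terms). Since the $k$-th term is also trivially bounded by $2^{-k}$ in the series $\sum_k 2^{-k}(\cdots)$, a standard dominated-convergence argument for series (split $\sum_{k\le K} + \sum_{k>K}$, choose $K$ so the tail is $<\eps/2$, then $n$ large so the head is $<\eps/2$) gives $d_*((G_n,c^n,f(y^n)),(G,c,f(y)))\to 0$. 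This proves sequential continuity, which suffices since $\cG_*[\cY]$ is metrizable.

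I do not expect a serious obstacle here; the only points requiring a little care are (i) the bookkeeping around the relabeling/identification — one should either work directly with the isomorphisms $\varphi_n$ or invoke the identification convention from the remark following the definition of $\cG_*$, consistently — and (ii) noting that $f(y)$ as a mark vector is well-defined on isomorphism classes, i.e. the map $(G,c,y)\mapsto(G,c,f(y))$ respects the isomorphism relation, which is immediate since $f$ is applied coordinatewise and an isomorphism of marked graphs carries $y_v = y'_{\varphi(v)}$ to $f(y_v) = f(y'_{\varphi(v)})$. Everything else is the routine $\eps$-management in the definition of local convergence, and the key structural observation — that $f$ acts only on the marks and commutes with the choice of isomorphism — makes the estimate transparent.
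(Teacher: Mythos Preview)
Your proposal is correct and follows exactly the approach the paper indicates: the paper's proof is a two-sentence sketch (``follows directly from the definition of the metric $d_*$ and the continuity of $f$ \dots uniformly on finite neighborhoods''), and you have simply written out the details of that sketch --- same isomorphisms, same per-$k$ estimate, same series tail argument. There is no difference in method, only in level of detail.
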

\begin{proof}
This follows directly from the definition of the metric $d_*$. 
\end{proof}

\subsubsection{Compactness in $\cG_*[\cY]$}
We next state a technical lemma that provides a general method for establishing compactness in $ \mathcal{G}_*[\mathcal{Y}' \times \mathcal{Y}] $ by leveraging compactness criteria in the underlying space $ \mathcal{Y} $. The proof of Lemma~\ref{lem:MarkedGraph_Comp} is in Section~\ref{sec:app-technical}.

\begin{lemma}\label{lem:MarkedGraph_Comp}
Let $K_m$ be a non-decreasing sequence of compact subsets of $\cY$, and let $\cK_0$ be a compact subset of $\cG_*[\cY']$. Then the set
\begin{equation*}\begin{aligned}
\cK:=\Big\{(G,c,y',y)\in\cG_*[\cY'\times\cY]:\,(G,c,y')\in\cK_0,\, y_v\in K_m\,\,\text{for all}\,\, v\in B_G(o,m),\,\forall m\in\NN\Big\}    
\end{aligned}\end{equation*}
is a compact subset of $\cG_*[\cY'\times\cY]$.
\end{lemma}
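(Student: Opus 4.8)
**Proof proposal for Lemma 1.5 (compactness in $\mathcal{G}_*[\mathcal{Y}]$).**

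\textbf{Proof plan.} The plan is to establish sequential compactness of $\cK$; since $\cG_*[\cY'\times\cY]$ is a Polish (hence metrizable) space, this is equivalent to compactness. So fix an arbitrary sequence $(G_n,c^n,y'^n,y^n)$ in $\cK$; I will extract a subsequence converging in $\cG_*[\cY'\times\cY]$ to a limit lying in $\cK$. First, since $(G_n,c^n,y'^n)\in\cK_0$ and $\cK_0$ is compact, pass to a subsequence (not relabeled) along which $(G_n,c^n,y'^n)\to(G,c,y')$ in $\cG_*[\cY']$ for some $(G,c,y')\in\cK_0$. Then apply the standard consistent-relabeling procedure for locally convergent graphs (the marked analogue of the discussion following the definition of $\cG_*$, which rests on a diagonal argument using that the graphs are locally finite, so that each isomorphism set $I(B_G(o,k),B_{G_n}(o_n,k))$ is finite): after a further subsequence we may assume that for every $k\in\NN$ there is $N_k$ such that $B_{G_n}(o_n,k)=B_G(o,k)$ as rooted weighted graphs for all $n\ge N_k$, with $c^n_{uv}\to c_{uv}$ and $d(y'^n_v,y'_v)\to0$ for every vertex $v$ and every edge $(u,v)$ of $G$. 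The crucial consequence is that every fixed vertex $v\in V:=V(G)$ lies in $V(G_n)$ for all large $n$, with $d_{G_n}(o_n,v)=d_G(o,v)=:m_v$.

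Next I extract the $\cY$-component of the marks. For each $v\in V$ and all $n$ large enough that $v\in B_{G_n}(o_n,m_v)$, the defining property of $\cK$ yields $y^n_v\in K_{m_v}$. Since $V$ is countable and each $K_{m_v}$ is compact, a diagonal argument over $V$ produces a further subsequence along which $y^n_v\to y_v$ for every $v\in V$, and $y_v\in K_{m_v}$ because $K_{m_v}$ is closed. Set $y:=(y_v)_{v\in V}$.

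It then remains to verify that $(G_n,c^n,y'^n,y^n)\to(G,c,y',y)$ in $\cG_*[\cY'\times\cY]$ along this subsequence and that the limit belongs to $\cK$. For the convergence, fix $k\in\NN$ and $\varepsilon>0$: for $n$ large, the relabeling makes $B_{G_n}(o_n,k)=B_G(o,k)$ as rooted graphs, while the finitely many weights $c^n_{uv}$ and marks $y'^n_v,y^n_v$ attached to $v\in B_G(o,k)$ are within $\varepsilon$ of their limits by the two paragraphs above; substituting these bounds into the formula for $d_*$ on $\cG_*[\cY'\times\cY]$ (with the product metric on $\cY'\times\cY$) shows $d_*\big((G_n,c^n,y'^n,y^n),(G,c,y',y)\big)\to0$. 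For membership, $(G,c,y')\in\cK_0$ since $\cK_0$ is closed; and for every $m\in\NN$ and every $v\in B_G(o,m)$ we have $m_v=d_G(o,v)\le m$, hence $y_v\in K_{m_v}\subseteq K_m$ by monotonicity of $(K_m)$. Thus $(G,c,y',y)\in\cK$.

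The main obstacle is the consistent relabeling in the first paragraph: to track a fixed vertex of the limit graph $G$ inside $G_n$ for all large $n$, one must align the ball isomorphisms across all radii $k$ simultaneously and along a single subsequence, which is exactly where local finiteness of the graphs is used. Once that identification is fixed, the second diagonalization over $V$ (using compactness of the $K_m$) and the unwinding of the metric $d_*$ are routine.
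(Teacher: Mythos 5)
Your proof is correct and takes essentially the same approach as the paper: pass to a subsequence along which the $\cY'$-marked component converges using compactness of $\cK_0$, align the ball isomorphisms, then diagonalize over the vertex set using compactness of the $K_m$ to extract convergent $\cY$-marks, and finally verify convergence and membership in $\cK$. Your write-up is somewhat more explicit about the consistent-relabeling step and the closedness verification, but the argument is the same.
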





\subsection{Graph convergence in the local weak sense}\label{sec:Local_Weak}
For a (possibly random) finite marked graph $(G,y)$, the empirical distribution associated with the marks is defined as
\begin{equation*}\begin{aligned}
\mu^{G,y}:=\frac{1}{|V|}\sum_{v\in V}\delta_{y_v},    
\end{aligned}\end{equation*}
which will be treated as a random element in the space $\cP(\cY)$ of probability measures on $\mathcal{Y}$. 

To study the convergence of empirical measures, we adapt the notion of marked graph convergence in probability in the local weak sense introduced in \cite{lacker2022localweakconvergencesparse}.
\begin{definition}
 Let $\mathcal{Y}$ be a Polish space. 
A sequence of marked graphs $\{(G_n,c^n,y^n)\}$ \emph{converges in probability in the local weak sense} to $(G,c,y)$ if
\begin{equation}
\lim_{n \to \infty} \frac{1}{|G_n|} \sum_{v \in G_n} \delta_{\mathcal{C}_v(G_n,c^n,y^n)}
= \cL\left(G,c,y\right)\quad \text{in probability in $\cP(\cG_*[\cY])$},
\end{equation}
    where $\mathcal{C}_v(G_n,c^n,y^n)$  denotes the connected component of the marked graph $(G_n,c^n,y^n)$ rooted at $v$.
\end{definition}
\begin{remark}\label{rem:law_root}
    If $\{(G_n,c^n,y^n)\}$ converges in probability in the local weak sense to $(G,c,y)$, then the empirical measure sequence $\left\{ \frac{1}{|G_n|} \sum_{v \in G_n} \delta_{y^n_v} \right\}$ converges in probability to $\cL(y_o)$ in $\cP(\mathcal{Y})$, where $o$ is the root of $G$.
\end{remark}

We refer the reader to \cite[Chapter 2]{Remco2024RandomG} for a detailed discussion on the local convergence of random graphs and examples.
\section{Main Results and Proofs}
\label{sec:three}
We now present the main analytical results of the paper, beginning with the existence and construction of physical solutions to the particle system defined in equations~\eqref{eq:1.1}. As discussed in Section \ref{sec:intro}, minimal fixed points of the map $\Gamma$ are natural candidates for physical solutions.

\subsection{Physical and minimal solutions}\label{sec:physical}
\begin{proposition}[Existence of minimal solution]
\label{prop:existence}
The minimal solution exists, and its $D$-component can be obtained by
\begin{equation*}\begin{aligned}
D_t:=\lim_{N\to\infty}\Gamma^{(N)}[\emptyset]_t=\bigcup_{N=0}^\infty\Gamma^{(N)}[\emptyset].  
\end{aligned}\end{equation*}
\end{proposition}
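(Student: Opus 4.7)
The plan is to construct the $D$-component of the minimal solution by monotone iteration of $\Gamma$ starting from the empty set-valued process, then verify that the resulting limit is both a fixed point of $\Gamma$ and dominated by every other solution. First I would observe that $\Gamma$ is monotone: if $D_t\subset D'_t$ for every $t\ge 0$, then since all weights $c_{uv}\ge 0$, the path $s\mapsto x_v+B_v(s)-\sum_u c_{uv}\bone_{\{u\in D_s\}}$ dominates its $D'$-counterpart pointwise, and the same is true after taking infimum over $s\in[0,t]$, so $\Gamma[D]_t\subset\Gamma[D']_t$. Setting $D^{(0)}:=\emptyset$, the trivial inclusion $\emptyset\subset\Gamma[\emptyset]$ together with induction produces an increasing sequence $D^{(N)}:=\Gamma^{(N)}[\emptyset]$, and I define $D^*_t:=\bigcup_{N\ge 0} D^{(N)}_t$ pointwise in $t$.

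The main step is to show $\Gamma[D^*]=D^*$. The inclusion $D^*\subset\Gamma[D^*]$ is immediate: monotonicity yields $D^{(N+1)}=\Gamma[D^{(N)}]\subset\Gamma[D^*]$ for every $N$, so the union is contained in $\Gamma[D^*]$. For the reverse, fix $v$ and $t$ with $v\in\Gamma[D^*]_t$. Local finiteness of $N_G^-(v)$ and monotone convergence give $\sum_u c_{uv}\bone_{\{u\in D^{(N)}_s\}}\uparrow\sum_u c_{uv}\bone_{\{u\in D^*_s\}}$ for every $s$, so $X_v^{D^{(N)}}(s)\downarrow X_v^{D^*}(s)$; interchanging the two monotone operations yields
$$a_N:=\inf_{s\in[0,t]} X_v^{D^{(N)}}(s)\;\downarrow\;\inf_{s\in[0,t]} X_v^{D^*}(s)=:a^*\le 0.$$
The fixed-point condition only supplies $a^*\le 0$, whereas one needs $a_N\le 0$ for some finite $N$, and the monotone-limit argument alone does not handle the boundary case $a^*=0$. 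Here I would exploit the almost-sure path regularity of the driving Brownian motions: a continuity-point minimum of $X_v^{D^*}$ at zero would force the Brownian term to graze zero, which the strong Markov property rules out a.s., and a default jump landing precisely on zero requires a continuous matching condition that is a null event. Hence a.s.\ $a^*\le 0$ implies $a^*<0$, so $a_N<0$ for all $N$ large enough and $v\in D^{(N+1)}_t\subset D^*_t$.

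With $\Gamma[D^*]=D^*$ established, setting $X_v^{G,x}(t):=x_v+B_v(t)-\sum_u c_{uv}\bone_{\{u\in D^*_t\}}$ produces a pair satisfying~\eqref{eq:1.1} and~\eqref{eq:1.2}. Minimality then follows by induction: for any other solution $(\tilde X,\tilde D)$, $\tilde D$ is a fixed point of $\Gamma$, and $D^{(0)}=\emptyset\subset\tilde D$ combined with monotonicity gives $D^{(N)}\subset\tilde D$ for every $N$, so the union yields $D^*\subset\tilde D$. The main obstacle, as noted, is the reverse inclusion in the fixed-point step: the non-strict inequality $\le 0$ in the definition of $\Gamma$ blocks a purely order-theoretic monotone-limit argument, and one has to invoke almost-sure properties of the driving noise to pass from $a^*\le 0$ to $a_N\le 0$ at finite $N$.
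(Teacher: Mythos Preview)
Your overall plan (monotone iteration, fixed point, minimality by induction) matches the paper, but you overcomplicate the reverse inclusion $\Gamma[D^*]\subset D^*$ and miss the elementary observation that makes it deterministic. The paper never takes the limit $a_N\downarrow a^*$ of running infima. Instead, from $v\in\Gamma[D^*]_t$ it fixes a single time $t_0\in[0,t]$ at which
\[
x_v+B_v(t_0)-\sum_{u\in N_G^-(v)}c_{uv}\bone_{\{u\in D^*_{t_0}\}}\le 0,
\]
and then exploits that $N_G^-(v)$ is \emph{finite}: since $D^*_{t_0}=\bigcup_N\Gamma^{(N)}[\emptyset]_{t_0}$ is an increasing union, some finite $N$ already satisfies $N_G^-(v)\cap D^*_{t_0}=N_G^-(v)\cap\Gamma^{(N)}[\emptyset]_{t_0}$. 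The sum at $t_0$ then \emph{equals} its limiting value rather than merely approximating it, giving $v\in\Gamma^{(N+1)}[\emptyset]_{t_0}\subset D^*_t$ at once. You invoke local finiteness only to justify pointwise monotone convergence of the sums, but its real role is stronger: it forces the finite sum to \emph{stabilize} at finite $N$ for each fixed time, which handles the boundary case $a^*=0$ directly whenever that infimum is actually realized at some $t_0$.

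Your detour through the strong Markov property and ``grazing'' is therefore unnecessary, and as written it is not rigorous: the minimizing time, the jump times of $D^*$, and the relevant levels all depend on the whole family $(B_u)_{u\in V}$ through the iteration, so the null-set claims would need real work. It also ties existence of the minimal solution to the specific law of the noise, whereas the paper stresses (Remark~\ref{rem:pathwise1}) that the construction is pathwise and later treats $B$ as a general input.
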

\begin{proof}
We first observe that the map $\Gamma$ is monotone: for any set-valued processes $D$ and $\tilde D$ such that $D_t \subset \tilde D_t$ for all $t \ge 0$, it holds that
\begin{equation*}
\Gamma[D]_t \subset \Gamma[\tilde D]_t, \quad \forall t \ge 0.
\end{equation*}
Applying this monotonicity iteratively starting from the empty set, the sequence $\Gamma^{(N)}[\emptyset]_t$ is increasing in $N$. Hence, we may define
\begin{equation*}
D_t := \lim_{N \to \infty} \Gamma^{(N)}[\emptyset]_t = \bigcup_{N=0}^\infty \Gamma^{(N)}[\emptyset]_t, \quad \forall t \ge 0.
\end{equation*}
We claim that $D = (D_t)_{t \ge 0}$ satisfies the fixed-point equation $\Gamma[D]_t = D_t$ for all $t \ge 0$. First, since $\Gamma^{(N)}[\emptyset]_t = \Gamma[\Gamma^{(N-1)}[\emptyset]]_t \subset \Gamma[D]_t$ for each $N$, taking the union over $N$ yields
$D_t \subset \Gamma[D]_t$.
Now we take $v\in\Gamma[D]_t$. By definition, this means
\begin{equation*}\begin{aligned}
\inf_{s\in[0,t]}\Big(x_v+Z_v(t)-\sum_{u\in N_G^-(v)}c_{uv}\bone_{\{u\in D_t\}}\Big)\leq0. 
\end{aligned}\end{equation*}
Therefore, there exists $t_0\in[0,t]$ such that 
\begin{equation*}\begin{aligned}
x_v+Z_v(t_0)-\sum_{u\in N_G^-(v)}c_{uv}\bone_{\{u\in D_{t_0}\}}\leq0.    
\end{aligned}\end{equation*}
Since $N_G^-(v)$ is a finite set and $D_{t_0} = \bigcup_{N} \Gamma^{(N)}[\emptyset]_{t_0}$, there exists a sufficiently large $N$ such that $N_G^-(v)\cap D_{t_0}=N_G^-(v)\cap\Gamma^{(N)}[\emptyset]_{t_0}$, which implies 
\begin{equation*}\begin{aligned}
x_v+Z_v(t_0)-\sum_{u\in N_G^-(v)}c_{uv}\bone_{\{u\in\Gamma^{(N)}[\emptyset]_{t_0}\}}\leq0,    
\end{aligned}\end{equation*}
that is, $v\in\Gamma^{(N+1)}[\emptyset]_{t_0}\subset D_t$. Hence, $\Gamma[D]_t\subset D_t$ for any $t\ge0$ and we conclude that $D_t = \Gamma[D]_t$ for all $t \ge 0$, and $(X,D)$ is a solution.

The minimality of $D$ follows from the monotonicity of $\Gamma$. In fact, let $(\tilde X,\tilde D)$ be any other solution. Then $\Gamma[\tilde D]=\tilde D$. Iteratively applying $\Gamma$ to $\emptyset\subset\tilde D$, we obtain $\Gamma^{(N)}[\emptyset]\subset\Gamma^{(N)}[\tilde D]=\tilde D$ for any $N\ge1$. Taking a union over $N$ yields $D \subset \tilde D$, proving minimality.
\end{proof} 

\begin{remark}\label{rem:pathwise1}
It is clear that the above construction of minimal solution is \emph{pathwise}, i.e. there exists a deterministic map $\varphi$ such that the minimal solution $(G,X,D)=\varphi(G,c,x,Z)$.
\end{remark}

However, generally speaking, minimal solutions can fail to be physical as they are not necessarily right-continuous.  We now provide the rigorous proof of the statement in  Example~\ref{ex:Minimal_Not_Physical} stated in Section~\ref{sec:intro}.

\begin{proof}[Proof of the statement in Example \ref{ex:Minimal_Not_Physical}]
We first note that
\begin{equation*}\begin{aligned}
\tau_n:=\inf\{t\ge0:X_n(t)\leq0\}\leq\inf\left\{t\ge0:\,B_t\leq-x_n\right\}=:\sigma_n.    
\end{aligned}\end{equation*}
The key observation is that $\{0,1,..,n\}\subset D_{\sigma_n}$ on the event that 
\begin{equation*}\begin{aligned}
\bigcap_{i=0}^{n-1}\left\{\sup_{s\in[0,\sigma_n]}X_i(s)\leq1\right\}\supset\bigcap_{i=0}^{n-1}\left\{\sup_{s\in[0,\sigma_n]}B_i(s)\leq1-x_i\right\}.   
\end{aligned}\end{equation*}
Conditioning on $\sigma_n$ and using the reflection principle and the density of the Brownian first hitting time (see \cite[Theorem 3.7.1]{shreve2004stochastic}), we compute:
\begin{equation*}\begin{aligned}
\PP\left[\bigcap_{i=0}^{n-1}\left\{\sup_{s\in[0,\sigma_n]}B_i(s)\leq1-x_i\right\}\right]&=\int_0^\infty\prod_{i=0}^{n-1}\PP\left[\sup_{s\in[0,t]}B_i(s)\leq1-x_i\right]\,\PP[\sigma_n\in\,\mathrm{d}t]\\
&\ge\int_0^\infty\left(\PP\left[\sup_{s\in[0,t]}B_i(s)\leq\frac12\right]\right)^n\,\PP[\sigma_n\in\,\mathrm{d}t]\\
&=\int_0^\infty\left(1-2\Phi\left(-\frac{1}{2\sqrt{t}}\right)\right)^n\,\cdot\frac{x_n}{t\sqrt{2\pi t}}e^{-\frac{x_n^2}{2t}}\,\mathrm{d}t\\
(\text{change of variable: }t\mapsto x_n^2t)\quad&=\int_0^\infty\left(1-2\Phi\left(-\frac{1}{2x_n\sqrt{t}}\right)\right)^n\,\cdot\frac{1}{t\sqrt{2\pi t}}e^{-\frac{1}{2t}}\,\mathrm{d}t,
\end{aligned}\end{equation*}
where $\Phi$ denotes the cumulative distribution function of the standard Gaussian distribution. By the dominated convergence theorem and the Gaussian tail bound $\Phi(-x)=1-\Phi(x)\le e^{-x^2/2}$ for any $x>0$, the above integral converges to $1$ as $n \to \infty$.
Moreover, for any $\varepsilon>0$,
\begin{equation*}\begin{aligned}
\sum_{n=1}^\infty\PP[\sigma_n>\varepsilon]=\sum_{n=1}^\infty\PP[\inf_{s\in[0,\varepsilon]}B_n(s)>-x_n]=\sum_{n=1}^\infty\PP[|B(\varepsilon)|\leq x_n]\leq\sum_{n=1}^\infty\frac{2}{\sqrt{2\pi\varepsilon}}x_n<\infty.    
\end{aligned}\end{equation*}
By the Borel–Cantelli lemma, we conclude that $\sigma_n \to 0$ almost surely. Therefore, $D_{0+}=V=\NN$ with probability $1$ for any solution $(X,D)$. Since $x_n>0$ for all $n\ge0$, any physical solution $(X,D)$ must satisfy $D_0=\emptyset$ by the definition of the physicality condition. As a result, the right continuity will be violated and hence there is no physical solution to equations \eqref{eq:1.1} with the given initial conditions.
\end{proof}

\subsection{Fragility, robustness and default cascades}\label{sec:robustness}
Recall the definition of fragility and robustness mentioned in Section \ref{sec:intro}.
\begin{definition}
A vertex $v\in V$ is said to be \emph{fragile} at time $t$ subject to the configuration $(G,c,x,Z)$, if 
\begin{equation*}\begin{aligned}
v\in F_t:=\Big\{v\in V:\,x_v+Z_v(t-)\ge0\quad\text{and}\quad x_v+Z_v(t)\leq \sum_{u\in N_G^-(v)}c_{uv}\Big\}. 
\end{aligned}\end{equation*}
\end{definition}
\begin{remark}
Let $(X,D)$ be a solution to the system \eqref{eq:1.1}. Then it is straightforward to see that $v\in F_{\tau_v}$ provided that $\tau_v<\infty$, where
\begin{equation*}
\tau_v:=\inf\{t\ge0:\,X_v(t)\leq0\}=\inf\{t\ge0:\,v\in D_t\}.    
\end{equation*}
\end{remark}


In the next lemma, we outline a few sufficient conditions for $(G,c,x,Z)$ to drive $\delta$-robust systems.
\begin{lemma}\label{lem:robust1}
 The configuration $(G,c,x,Z)$ drives $\delta$-robust systems for some $\delta>0$, provided that at least one of the following conditions holds:
 \begin{enumerate}
    \item $G$ is finite.
    \item (i.i.d. Lévy noises) $G$ is countably infinite, locally finite and
        \begin{itemize}
        \item[a.] There exists $w>0$, such that $w_v\leq w$ for every $v\in V$.
        \item[b.] $(Z_v)_{v\in V}$ are i.i.d. Lévy processes and $(x_v)_{v\in V}\stackrel{\mathrm{i.i.d.}}{\thicksim}\mu$.
        \item[c.] There exists $\eta>0$, such that
    \begin{equation}\label{eq:percolation1}
        p_\eta:=\sup_{t\ge0} \PP[x_v+Z_v(t-)\in (-\eta,w+\eta)]<p_c, 
        \end{equation}
        where $p_c$ is the threshold for site percolation of the graph $G$  \cite[Chapter 1]{grimmett1999percolation}.
        \end{itemize}
    \item (common noise) $G$ is countably infinite, locally finite and
        \begin{itemize}
        \item [a.] There exists $w>0$, such that $w_v\leq w$ for every $v\in V$.
        \item[b.] There exist i.i.d. Lévy processes $(\tilde Z_v)_{v\in V}$ and another càdlàg process $Z^0$ that is independent of $(\tilde Z_v)_{v\in V}$ such that $Z_v(t)=\tilde Z_v(t)+Z^0(t)$, and
        $(x_v)_{v\in V}\stackrel{\mathrm{i.i.d.}}{\thicksim}\mu$.
        \item[c.] With probability $1$, there exists $\eta>0$, such that
        \begin{equation}\label{eq:percolation2}
        \sup_{t\ge0} \PP[x_v+\tilde Z_v(t-)+Z^0(t)\in (-\eta,w+\eta)\,|\,Z^0]<p_c.
        \end{equation}
        \end{itemize}
    \item (scaled symmetry) There exists a configuration $(\tilde G,\tilde c,\tilde x,\tilde Z)$ satisfying one of conditions 1-3, and there exist (potentially random) positive variables $(\lambda_v)_{v\in V}$ such that 
    \begin{equation*}\begin{aligned}
    G=\tilde G,\quad x_v=\lambda_v\tilde x_v,\quad c_{uv}=\lambda_v\tilde c_{uv},\quad Z_v=\lambda_v\tilde Z_v.    
    \end{aligned}\end{equation*}
    \end{enumerate}
\end{lemma}
\begin{remark}\label{rem:robust1}
In practice, financial systems exhibit significant size disparities. The scaling factors $(\lambda_v)_{v\in V}$ in condition 4 capture these size differences, allowing larger institutions to have proportionally scaled initial capital, exposures, and volatilities relative to smaller counterparts. The idea is that while institutions differ in scale, the relative risk profile—captured by the ratios of capitals to potential losses—remains consistent with a base symmetric model, making the analysis tractable while preserving realistic heterogeneity in institutional sizes and interconnections.
\end{remark}
\begin{proof}
For condition 1, note that the set of fragile vertices will be automatically finite if the whole graph $G$ is finite.

We now focus on condition 2. Fix $\delta > 0$ to be chosen later. For each $v \in V$ and $t\ge0$, define the event
\begin{equation*}\begin{aligned}
A^{(v)}_{t,2\delta} :=\left\{ x_v + Z_v(s-)<0\,\text{ or }\,x_v+Z_v(s)>w,\ \forall s \in [t, t +2\delta] \right\}\subset\left\{ v\notin F_s, \ \forall s \in [t, t + 2\delta] \right\}.
\end{aligned}\end{equation*}
To ease the notation, we omit the index $v$ when there is no ambiguity. Notice that for $s>t$, $x+Z(s)=(x+Z(t-))+\tilde Z(s-t)$ where $\tilde Z(r):= Z(t+r)-Z(t-)$ is another Lévy process with the same distribution as $Z$, and is independent of $x+Z(t-)$. Denote by $\mu_{t-}$ the law of $x+Z(t-)$. We rewrite $A_{t,2\delta}$ as
\begin{equation*}
A_{t,2\delta}=\{\forall r\in [0,2\delta],\, x+Z(t-)+\tilde Z(r-)<0\,\text{ or }x+Z(t-)+\tilde Z(r)>w\}.
\end{equation*}
From equation \eqref{eq:percolation1}, there exist $\eps>0$ and $\eta>0$, such that $p_\eta<p_c-\eps$. For $z\geq w+\eta$, 
\begin{equation*}\begin{aligned}
q^+(z,2\delta):=\PP[\forall r\in [0,2\delta], z+\tilde Z(r)>w]\ge\PP\Big[\inf_{r\in [0,2\delta]}\tilde Z(r)>-\eta\Big].
\end{aligned}\end{equation*}
Similarly, for $z\leq -\eta$, 
\begin{equation*}\begin{aligned}
q^-(z,2\delta):=\PP[\forall r\in [0,2\delta], z+\tilde Z(r)<0]\ge\PP\Big[\sup_{r\in [0,2\delta]}\tilde Z(r)<\eta\Big].
\end{aligned}\end{equation*}
Let $q(z,2\delta):=q^+(z,2\delta)\bone_{\{z\geq w+\eta\}}+q^-(z,2\delta)\bone_{\{z\leq -\eta\}}$. Then,
\begin{equation*}\begin{aligned}
\PP[A_{t,2\delta}]&=\EE\left[\PP[A_{t,2\delta}|\,x+Z(t-)]\right]\ge\EE[q(x+Z(t-),2\delta)]\\
    &=\int_{-\infty}^{-\eta} q^-(z,2\delta)\mu_{t-}(dz)+\int_{w+\eta}^{\infty}q^+(z,2\delta)\mu_{t-}(dz)\\
    &\ge\PP\Big[\sup_{r\in [0,2\delta]}|\tilde Z(r)|<\eta\Big]\left(1-\PP[x+Z(t-)\in (-\eta,w+\eta)]\right).
\end{aligned}\end{equation*}
From equation \eqref{eq:percolation1}, $p_\eta=\sup_{t>0} \PP[x+Z(t-)\in (-\eta,w+\eta)]\leq p_c-\eps$. In addition, as $\tilde Z$ is a Lévy process, we can choose $\delta$ sufficiently small such that
\begin{equation*}\begin{aligned}
\PP\Big[\sup_{r\in [0,2\delta]}|\tilde Z(r)|<\eta\Big]\ge\frac{1-p_c}{1-p_c+\varepsilon}.
\end{aligned}\end{equation*}
Then $\PP[A_{t,2\delta}]\geq 1-p_c$ for every $t>0$. Since the $(x_v, Z_v)$ are i.i.d., the events $A^{(v)}_{t,2\delta}$ are independent across $v\in V$. Therefore, by the percolation theory (see \cite[Section 1.4-1.6]{grimmett1999percolation}),
\begin{equation*}\begin{aligned}
\mathbb{P}\Bigg[\bigcup_{s\in[t,t+2\delta]}F_s \text{ contains no infinite weakly connected component}\Bigg] = 1,\quad\forall t\ge0.
\end{aligned}\end{equation*}
By taking a countable intersection over $t=0,\delta,2\delta,...$, we obtain
\begin{equation*}\begin{aligned}
\mathbb{P}\Bigg[\bigcap_{n=0}^\infty\Big\{\bigcup_{s\in[n\delta,(n+2)\delta]}F_s \text{ contains no infinite weakly connected component}\Big\}\Bigg] = 1,
\end{aligned}\end{equation*}
which implies $\delta$-robustness as any subinterval of $[0,\infty)$ of length $\delta$ must be contained in $[n\delta,(n+2)\delta]$ for some $n\ge0$.

For condition 3, the proof can be done by first conditioning on $Z$ and then repeating the previous argument for condition 2. For condition 4, it suffices to notice that the scaled system has the same fragile set as the original system, for any $t\ge0$.
\end{proof}

We then turn to show some technical results concerning minimal solutions for which the configuration $(G,c,x,Z)$ drives robust systems:
\begin{proposition}\label{prop:MinSolution_RightCont}
Let the configuration $(G,c,x,Z)$ drive robust systems and let $(X,D)$ be the minimal solution. Then
\begin{equation*}\begin{aligned}
\PP[D_t=D_{t+},\quad\forall t\ge0]=1.    
\end{aligned}\end{equation*}
\end{proposition}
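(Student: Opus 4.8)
The plan is to show that, almost surely on the robustness event, the left limit $D_{t+}$ can only fail to equal $D_t$ if a ``new'' default appears at arbitrarily small times after $t$ that is not already forced by $D_t$, and that such a sequence of new defaults would have to be carried by the fragile set; robustness then forbids an infinite connected cluster of such defaults, while a short argument rules out a finite one as well. More concretely, fix $t\ge 0$. Since $\Gamma^{(N)}[\emptyset]$ is increasing in $N$ and $D_s=\bigcup_N \Gamma^{(N)}[\emptyset]_s$ with each $s\mapsto \Gamma^{(N)}[\emptyset]_s$ nondecreasing in $s$ (as infima over longer intervals can only decrease), the map $s\mapsto D_s$ is itself nondecreasing in the sense of set inclusion. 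Hence $D_{t+}=\bigcap_{s>t} D_s \supset D_t$ exists as a set, and it suffices to prove $D_{t+}\subset D_t$.

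Suppose $v\in D_{t+}\setminus D_t$. Then $v\in D_s$ for every $s>t$ but $v\notin D_t$. The first key step is to analyze how $v$ enters $D_s$: since $v\notin D_t$, we have $\inf_{r\in[0,t]}X_v^{G,x}(r)>0$ (using $X_v^{G,x}(r)=x_v+B_v(r)-\sum_{u}c_{uv}\bone_{\{u\in D_r\}}$ and the fixed-point property). Because $v\in D_s$ for $s$ slightly above $t$, either (i) $X_v^{G,x}$ itself crosses $0$ on $(t,s]$ using only the losses already present in $D_t$, i.e.\ $v$ would be in $D_s$ even without any further defaults — but by continuity of $B_v$ and finiteness of $N_G^-(v)$, this would force $\inf_{r\in[0,t]}X_v(r)=0$ in the limit $s\downarrow t$, contradicting $v\notin D_t$ unless the boundary value is exactly $0$, a probability-zero event for a fixed $v$ which we discard; or (ii) $v$'s default is triggered by some neighbor $u\in N_G^-(v)$ defaulting on $(t,s]$, i.e.\ $u\in D_s\setminus D_t$ as well. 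Iterating, we build a chain $v=v_0, v_1, v_2,\dots$ with $v_{k+1}\in N_G^-(v_k)$, each $v_k\in D_{t+}\setminus D_t$, and each $v_k$ ``not yet safe'' at time $t$ in the sense that $0\le x_{v_k}+B_{v_k}(t)\le w_{v_k}$ up to the null set — here I use that $v_k\in D_s\setminus D_t$ forces, in the limit, the healthiness $x_{v_k}+B_{v_k}(t)$ minus the ``old'' losses to lie in $[0,\text{(potential new losses)}]\subset[0,w_{v_k}]$. Thus every $v_k$ lies in the fragile set $F_t$, and the $v_k$ span a connected subgraph of $G$.

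The second key step is to rule out this chain being infinite: the robustness hypothesis $\PP[\neg\mathrm C_t,\ \forall t\ge0]=1$ says precisely that $F_t$ contains no infinite weakly connected component, so almost surely the chain must terminate, i.e.\ it is finite. But if $D_{t+}\setminus D_t$ is finite and nonempty, say $D_{t+}\setminus D_t=\{u_1,\dots,u_m\}$ with $m\ge1$, then for each $j$, $u_j$ enters $D_s$ for all $s>t$; choosing $s$ close enough to $t$ that no element of $D_s\setminus D_{t+}$ has yet appeared among the (finitely many) relevant in-neighbors — using right-continuity/limit arguments and the finiteness of $\bigcup_j N_G^-(u_j)$ — we get that $\{u_1,\dots,u_m\}$ would already satisfy the defining inequalities with losses drawn only from $D_t$, by continuity sending $s\downarrow t$. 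This yields $x_{u_j}+B_{u_j}(t)-\sum_{u\in N_G^-(u_j)}c_{u u_j}\bone_{\{u\in D_t\cup\{u_1,\dots,u_m\}\}}\le 0$ for each $j$, which is exactly the cascade step showing $\{u_1,\dots,u_m\}\subset D_t^{(\infty)}=D_t$ at the fixed level $t$ (applying the monotonicity of $\Gamma$ in the spatial iteration at time $t$), contradicting $u_j\notin D_t$. Hence $D_{t+}\setminus D_t=\emptyset$.

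Finally, to promote this from ``for each fixed $t$'' to ``for all $t$ simultaneously'', note that the only null sets discarded above were of the form $\{x_v+B_v(t)=0\}$ for fixed $v$ and $t$; to handle all $t$ at once I would instead run the argument along a sequence $s_k\downarrow t$ and observe that the chain-construction and the fragility conclusion $v_k\in F_t$ use only pathwise continuity of the Brownian motions and the structure of $\Gamma$, not any fixed-$t$ null set — the potential bad event ``$\inf_{r\in[0,t]}X_v(r)=0$ exactly'' can be absorbed because if it occurred then $v\in D_t$ already by right-continuity of $D$ from the right combined with the definition of $D^{(0)}_t$ via $\inf_{s\in[0,t)}$, so there is in fact no contradiction to resolve. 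I expect the main obstacle to be precisely this last point: carefully justifying that the triggering chain lands in $F_t$ (rather than in $F_s$ for the moving time $s$) uniformly, and confirming that the boundary/null-set issues genuinely dissolve under right-continuity, so that the robustness event — which is a single almost-sure event, not one depending on $t$ — is all that is needed.
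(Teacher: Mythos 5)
Your broad strategy — show $D_{t+}\setminus D_t\subset F_t$, invoke robustness to reduce to a finite weakly connected component, then rule out the finite case — matches the paper's structure, and the first two steps are essentially sound (modulo a spurious null-set worry: in your case (i), sending $s\downarrow t$ directly yields $X_v^{G,x}(t)\le 0$ and hence $v\in D_t$, so no ``boundary value exactly $0$'' probability-zero event needs to be discarded). However, the ruling out of the finite case contains a genuine gap.

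Your final step derives the inequality
\[
x_{u_j}+B_{u_j}(t)-\sum_{u\in N_G^-(u_j)}c_{u u_j}\,\bone_{\{u\in D_t\cup\{u_1,\dots,u_m\}\}}\le 0\qquad\text{for each }j,
\]
and claims this ``is exactly the cascade step showing $\{u_1,\dots,u_m\}\subset D_t^{(\infty)}=D_t$.'' This is incorrect for two reasons. First, the inequality you obtain has the \emph{entire} set $\{u_1,\dots,u_m\}$ plugged into the loss indicator; this is precisely the self-sustaining-cluster condition from the pathological solution in Example~1.1, and it does \emph{not} entail membership in $D_t^{(\infty)}$, which is built iteratively from $D_t^{(0)}$ by adding one shell of defaults at a time starting with no ``new'' defaults — a mutually supporting finite cluster survives your inequality yet never enters $D_t^{(\infty)}$. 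Second, the identity $D_t^{(\infty)}=D_t$ for the minimal solution at a jump time is exactly the content of the \emph{next} proposition in the paper (physicality of the minimal solution) and cannot be used here.

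The paper circumvents this by working with the time-indexed iteration $\Gamma^{(N)}[\emptyset]_s$ on a short interval rather than the static cascade at time $t$. Having identified the finite component $F_t^n\supset D_{t+}\setminus D_t$, it introduces the strictly positive gap $\sigma-t$ by looking at the first times after $t$ that vertices in $F_t^n\setminus D_t$ or in the (finite) outer boundary $\partial^{\mathrm{out}}F_t^n\setminus D_t$ could possibly cross zero with losses restricted to $D_t$; finiteness of both sets guarantees $\sigma>t$. It then proves by induction on $N$ that $(F_t^n\setminus D_t)\cap\Gamma^{(N)}[\emptyset]_s=\emptyset$ for all $s\in(t,\sigma)$: each $\Gamma^{(N+1)}$ step can only trigger a default in $F_t^n\setminus D_t$ via in-neighbors that either already lie in $D_t$ (accounted for), lie in $F_t^n\setminus D_t$ (excluded by the induction hypothesis), or lie in $\partial^{\mathrm{out}}F_t^n\setminus D_t$ (excluded by the choice of $\sigma$). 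This is the mechanism that actually rules out the finite self-sustaining cluster, and it is the ingredient missing from your argument. To repair your proof, you would need to replace the static cascade at time $t$ by an argument along these lines that exploits the $\Gamma^{(N)}[\emptyset]$ structure on a time interval $(t,\sigma)$ with $\sigma$ strictly greater than $t$.
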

\begin{proof}
Suppose there exists $t\ge0$ and a vertex $v\in D_{t+}\setminus D_t\neq\emptyset$. We observe that, necessarily,
\begin{equation*}\begin{aligned}
X_v(t)&=x_v+Z_v(t)-\sum_{u\in N_G^-(v)}c_{uv}\bone_{\{u\in D_t\}}>0,\\ X_v(t+)&=x_v+Z_v(t)-\sum_{u\in N_G^-(v)}c_{uv}\bone_{\{u\in D_{t+}\}}\leq0.   
\end{aligned}\end{equation*}
This implies $v\in D_{t+}\setminus D_t\subset F_t$. By the robustness assumption, $F_t$ admits a decomposition
\begin{equation*}\begin{aligned}
F_t=\bigcup_{n}F_t^{n},   
\end{aligned}\end{equation*}
where each $F_t^n$ is a finite weakly connected component. Then there exists $n$ such that $v\in F_t^n$. For each $w\in F_t\setminus D_t$, consider the hitting time
\begin{equation*}\begin{aligned}
\sigma_w:=\inf\Big\{s\ge t:\,x_w+Z_w(s)-\sum_{u\in N_G^-(v)}c_{uw}\bone_{\{u\in D_t\}}\leq0\Big\}>t.    
\end{aligned}\end{equation*}
By the finiteness of $F_t^n$, we know that $\min_{w\in F_t^n\setminus D_t}\sigma_w>t$. In addition, we observe that 
\begin{equation*}\begin{aligned}
x_u+Z_u(t)-\sum_{w\in N_G^-(u)}c_{wu}>0    
\end{aligned}\end{equation*}
for any $u\in F_t^c\setminus D_t$. From the definition of outer boundary of $F^n_t$,
\begin{equation*}\begin{aligned}
\partial^{\mathrm{out}}F_t^n=\{u\in(F_t^n)^c:\,\exists v\in F_t^n\,\,\text{s.t. }(u,v)\in E\,\,\text{or }(v,u)\in E\},    
\end{aligned}\end{equation*}
which is finite and has empty intersection with $F_t$ due to the maximal connectedness of $F_t^n$. In other words, $\partial^{\mathrm{out}}F_t^n\subset F_t^c$. For each $u\in\partial^{\mathrm{out}}F_t^n\setminus D_t$, consider the hitting time
\begin{equation*}\begin{aligned}
\sigma_u:=\inf\Big\{s\ge t:\,x_u+Z_u(s)-\sum_{w\in N_G^-(u)}c_{wu}\leq0\Big\}>t.    
\end{aligned}\end{equation*}
By the finiteness of $\partial^{\mathrm{out}}F_t^n$, we know that $\min_{u\in\partial^{\mathrm{out}}F_t^n\setminus D_t}\sigma_u>t$. Now, define
\begin{equation*}\begin{aligned}
\sigma:=(\min_{w\in F_t^n\setminus D_t}\sigma_w)\wedge(\min_{u\in\partial^{\mathrm{out}}F_t^n\setminus D_t}\sigma_u)>t.    
\end{aligned}\end{equation*}
The claim is that for $(F_t^n\setminus D_t)\cap D_s=\emptyset$ for all $s\in(t,\sigma)$. To see this, we first point out that $w\in F_t^{n}\setminus D_t$ implies
\begin{equation*}\begin{aligned}
&\inf_{s\in[0,t]}(x_w+Z_w(s))\ge\inf_{s\in[0,t]}X_w(s)>0\quad\text{and}\\& x_w+Z_w(s)\ge x_w+Z_w(s)-\sum_{u\in N_G^-(w)}c_{uw}\bone_{\{u\in D_t\}}>0,\quad\forall s\in(t,\sigma),    
\end{aligned}\end{equation*}
which implies $(F_t^n\setminus D_t)\cap\Gamma^{(0)}[\emptyset]_s=\emptyset$ for all $s\in(t,\sigma)$. Now, suppose $(F_t^n\setminus D_t)\cap\Gamma^{(N)}[\emptyset]_s=\emptyset$ for all $s\in(t,\sigma)$. We fix any $w\in F_t^n\setminus D_t$ and any $s\in(t,\sigma)$. The condition $(F_t^n\setminus D_t)\cap\Gamma^{(N)}[\emptyset]_s=\emptyset$ implies that, for any $u\in N_G^-(v)\cap\Gamma^{(N)}[\emptyset]_s$, either $u\in D_t$ or $u\in\partial(F_t^n)^c\setminus D_t$. In the former case, $\bone_{\{u\in\Gamma^{(N)}[\emptyset]_s\}}=1=\bone_{\{u\in D_t\}}$. In the latter case, $\bone_{\{u\in\Gamma^{(N)}[\emptyset]_s\}}=\bone_{\{u\in D_s\}}=0$. Combining the above, we get
\begin{equation*}\begin{aligned}
x_w+Z_w(s)-\sum_{u\in N_G^-(w)}c_{uw}\bone_{\{u\in \Gamma^{(N)}[\emptyset]_s\}}\ge x_w+Z_w(s)-\sum_{u\in N_G^-(w)}c_{uw}\bone_{\{u\in D_t\}}>0    
\end{aligned}\end{equation*}
for all $s\in(t,\sigma)$. In addition, for any $w\in F_t^n\setminus D_t$, it holds that
\begin{equation*}\begin{aligned}
&\inf_{s\in[0,t]}\Big(x_w+Z_w(s)-\sum_{u\in N_G^-(w)}c_{uw}\bone_{\{u\in\Gamma^{(N)}[\emptyset]_s\}}\Big)\\
&\ge\inf_{s\in[0,t]}\Big(x_w+Z_w(s)-\sum_{u\in N_G^-(w)}c_{uw}\bone_{\{u\in D_s\}}\Big)>0.    
\end{aligned}\end{equation*}
The above two estimates combined together imply that $w\notin\Gamma^{(N+1)}[\emptyset]_s$. By the arbitrariness of $w\in F_t^n\setminus D_t$, we obtain $(F_t^n\setminus D_t)\cap\Gamma^{(N+1)}[\emptyset]_s=\emptyset$ for all $s\in(t,\sigma)$. Then it can be shown inductively that $(F_t^n\setminus D_t)\cap\Gamma^{(N)}[\emptyset]_s=\emptyset$ for any $N\ge0$ and therefore $(F_t^n\setminus D_t)\cap D_s=\emptyset$ for all $s\in(t,\sigma)$, which further implies that $(F_t^n\setminus D_t)\cap D_{t+}=\emptyset$. As the choice of $n$ is arbitrary and $D_{t+}\setminus D_t\subset F_t$, it necessarily holds that $D_{t+}\setminus D_t=\emptyset$. 
\end{proof}

\begin{proposition}\label{prop:MinimalIsPhysical}
Let the configuration $(G,c,x,Z)$ drive robust systems. Then the minimal solution is physical.    
\end{proposition}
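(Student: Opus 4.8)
The plan is to check conditions~(a) and (b) of Definition~\ref{def:physical}. Condition~(a) --- the right-continuity of $t\mapsto D_t$ --- is precisely the content of Proposition~\ref{prop:MinSolution_RightCont}, so nothing new is needed there, and this is the only place where the robustness hypothesis is actually used. The rest of the work goes into condition~(b); in fact I would prove the slightly stronger identity $D_t=D_t^{(\infty)}$ for \emph{every} $t\ge 0$ (not only at jump times), relying only on local finiteness of $G$ together with the explicit construction $D_t=\bigcup_{N\ge 0}\Gamma^{(N)}[\emptyset]_t$ and the monotonicity properties furnished by Proposition~\ref{prop:existence}. The identity is the combination of two inclusions.

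For $D_t^{(\infty)}\subseteq D_t$ I would induct on $N$ to show $D_t^{(N)}\subseteq D_t$. For $N=0$, $v\in D_t^{(0)}$ means $\inf_{s\in[0,t)}X_v^{G,x}(s)\le 0$, hence $\inf_{s\in[0,t]}X_v^{G,x}(s)\le 0$, so $v\in D_t$ by \eqref{eq:1.2}. For the inductive step, if $D_t^{(N)}\subseteq D_t$ and $v\in D_t^{(N+1)}\setminus D_t^{(N)}$, then $x_v+B_v(t)-\sum_{u\in N_G^-(v)}c_{uv}\bone_{\{u\in D_t^{(N)}\}}\le 0$; since $\bone_{\{u\in D_t^{(N)}\}}\le\bone_{\{u\in D_t\}}$ this forces $X_v^{G,x}(t)\le 0$ by \eqref{eq:1.1}, hence $v\in D_t$. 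Taking the union over $N$ gives the inclusion. (Conceptually, $D_t^{(\infty)}$ is the smallest set containing $D_t^{(0)}$ that is closed under the time-$t$ cascade, and $D_t$ is \emph{some} such set because $D$ is a fixed point of $\Gamma$.)

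For the reverse inclusion $D_t\subseteq D_t^{(\infty)}$ I would use that $\Gamma^{(N)}[\emptyset]_s\subseteq D_s$ for all $N$ and all $s$, and prove by induction on $N$ that $\Gamma^{(N)}[\emptyset]_t\subseteq D_t^{(\infty)}$ (the case $N=0$ being trivial). For $N=1$: $v\in\Gamma^{(1)}[\emptyset]_t$ means $\inf_{s\in[0,t]}(x_v+B_v(s))\le 0$; since $s\mapsto x_v+B_v(s)$ is continuous, already $\inf_{s\in[0,t)}(x_v+B_v(s))\le 0$, and as $X_v^{G,x}(s)\le x_v+B_v(s)$ we get $\inf_{s\in[0,t)}X_v^{G,x}(s)\le 0$, i.e. $v\in D_t^{(0)}$. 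For the step $N\to N+1$: $v\in\Gamma^{(N+1)}[\emptyset]_t$ means the infimum over $[0,t]$ of $x_v+B_v(s)-\sum_{u\in N_G^-(v)}c_{uv}\bone_{\{u\in\Gamma^{(N)}[\emptyset]_s\}}$ is $\le 0$, and I split into the case where this still holds with the infimum restricted to $[0,t)$ and the complementary case where the value at $s=t$ is $\le 0$. In the first case, bounding the integrand below by $X_v^{G,x}(s)$ via $\Gamma^{(N)}[\emptyset]_s\subseteq D_s$ gives $\inf_{s\in[0,t)}X_v^{G,x}(s)\le 0$, so $v\in D_t^{(0)}\subseteq D_t^{(\infty)}$. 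In the second case, $x_v+B_v(t)-\sum_u c_{uv}\bone_{\{u\in\Gamma^{(N)}[\emptyset]_t\}}\le 0$, and since $\Gamma^{(N)}[\emptyset]_t\subseteq D_t^{(\infty)}$ by the inductive hypothesis, also $x_v+B_v(t)-\sum_u c_{uv}\bone_{\{u\in D_t^{(\infty)}\}}\le 0$; because $G$ is locally finite, $D_t^{(\infty)}$ is a fixed point of the one-step cascade map $S\mapsto S\cup\{w:\ x_w+B_w(t)-\sum_{u\in N_G^-(w)}c_{uw}\bone_{\{u\in S\}}\le 0\}$ (the finitely many relevant coordinates stabilise at a finite stage), so $v\in D_t^{(\infty)}$. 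Taking the union over $N$ yields $D_t=\bigcup_N\Gamma^{(N)}[\emptyset]_t\subseteq D_t^{(\infty)}$, and combining with the first inclusion proves $D_t=D_t^{(\infty)}$ for every $t\ge0$, hence condition~(b).

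The one point requiring genuine care --- and the main obstacle --- is the bookkeeping between the half-open interval $[0,t)$ appearing in $D_t^{(0)}$ and the closed interval $[0,t]$ appearing in \eqref{eq:1.2} and in the definition of $\Gamma$: one must route contributions to the relevant infima coming from times strictly below $t$ into $D_t^{(0)}$ (using continuity of the Brownian paths and the pathwise domination $\Gamma^{(N)}[\emptyset]_s\subseteq D_s$), and contributions realised exactly at time $t$ into the time-$t$ cascade (using the inductive hypothesis and cascade-closedness of $D_t^{(\infty)}$); everything else is routine monotonicity. At $t=0$ one reads $D_0^{(0)}=\{v:\ x_v\le 0\}$ from the convention $X_v^{G,x}(s)=x_v$ for $s\in[-1,0)$, and the same argument applies verbatim.
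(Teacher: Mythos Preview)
Your proposal is correct and follows the same route as the paper: invoke Proposition~\ref{prop:MinSolution_RightCont} for condition~(a), and for condition~(b) establish the double inclusion $D_t^{(\infty)}\subseteq D_t$ and $D_t\subseteq D_t^{(\infty)}$ by induction on $N$. The only cosmetic difference is that the paper inducts on the slightly sharper statement $\Gamma^{(N)}[\emptyset]_t\subseteq D_t^{(N)}$ (so it does not need your auxiliary observation that $D_t^{(\infty)}$ is closed under a single cascade step via local finiteness), and it leaves implicit the half-open versus closed interval bookkeeping that you rightly identify and spell out.
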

\begin{proof}
Let $(X,D)$ be the minimal solution. It has been shown in Prop \ref{prop:MinSolution_RightCont} that $t\mapsto D_t$ is almost surely right-continuous. It remains to verify that the jump size at each discontinuity time satisfies Definition~\ref{def:physical}. Fix any time $t \ge 0$ such that $D_{t-} \subsetneq D_t$. First, it is easy to see that $D_t^{(0)}\subset D_t$. Assuming that $D_t^{(N)}\subset D_t$, we get 
\begin{equation*}\begin{aligned}
D_t^{(N+1)}&:=D_t^{(N)}\cup\Big\{v\in V:\,x_v+Z_v(t)-\sum_{u\in N_G^-(v)}c_{uv}\bone_{\{u\in D_t^{(N)}\}}\leq0\Big\}\\
&\subset D_t^{(N)}\cup\Big\{v\in V:\,x_v+Z_v(t)-\sum_{u\in N_G^-(v)}c_{uv}\bone_{\{u\in D_t\}}\leq0\Big\}\subset D_t.
\end{aligned}\end{equation*}
As a result, it follows from an induction argument that $D_t^{(N)}\subset D_t$ for any $N\ge1$ and thus $D_t^{(\infty)}\subset D_t$. Conversely, as $\emptyset\subset D_t^{(0)}$, we can iteratively show that $\Gamma^{(N)}[\emptyset]_t\subset D_t^{(N)}$ for any $N\ge1$ and thus $D_t\subset D_t^{(\infty)}$ by taking $N\to\infty$. Combining both directions, we conclude that  $D_t=D_t^{(\infty)}$.
\end{proof}
\begin{corollary}
Let $(X,D)$ be the minimal solution. With probability $1$, for any $t\ge0$,
\begin{equation*}\begin{aligned}
x_v+Z_v(t)-\sum_{u\in N_G^-(v)}c_{uv}\bone_{\{u\in D_{t-}\}}>0,\quad\forall v\notin D_{t-}\quad\text{implies }\quad D_t=D_{t-}.
\end{aligned}\end{equation*}
\end{corollary}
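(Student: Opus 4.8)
The plan is to derive the corollary from the two propositions just established --- that the minimal solution $(X^{G,x},D)$ is right-continuous and physical. Work on the probability-one event on which both hold, fix $t\ge0$, and assume the hypothesis: $x_v+B_v(t)-\sum_{u\in N_G^-(v)}c_{uv}\bone_{\{u\in D_{t-}\}}>0$ for every $v\notin D_{t-}$. Since $s\mapsto D_s$ is non-decreasing and right-continuous and $B_v$ is continuous, each path $X_v^{G,x}$ is right-continuous with left limits and has only downward jumps, and $X_v^{G,x}(t-)=x_v+B_v(t)-\sum_{u\in N_G^-(v)}c_{uv}\bone_{\{u\in D_{t-}\}}$; thus the hypothesis is precisely that $X_v^{G,x}(t-)>0$ for every $v\notin D_{t-}$. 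Monotonicity of $D$ gives $D_{t-}\subset D_t$, so it suffices to show $D_t\subset D_{t-}$.

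The crux is the identity $D_t^{(0)}=D_{t-}$, where $D_t^{(0)}=\{v:\inf_{s\in[0,t)}X_v^{G,x}(s)\le0\}$ is the zeroth cascade layer from Definition~\ref{def:physical}. The inclusion $D_{t-}\subset D_t^{(0)}$ is immediate. For the converse, take $v\notin D_{t-}$. Then $v\notin D_s$ for every $s<t$, and since $D$ is the minimal solution we have $D_s=\{u:\inf_{r\in[0,s]}X_u^{G,x}(r)\le0\}$, so $X_v^{G,x}(s)\ge\inf_{r\in[0,s]}X_v^{G,x}(r)>0$ for all $s<t$; moreover $X_v^{G,x}(t-)>0$ by the hypothesis. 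Hence the function that equals $X_v^{G,x}$ on $[0,t)$ and takes the value $X_v^{G,x}(t-)$ at $t$ belongs to $D([0,t])$, is strictly positive, and jumps only downward; any such function attains its infimum on $[0,t]$ (a minimizing sequence converges to some $s^*$; if $s^*$ is approached from the right, right-continuity identifies the infimum with $X_v^{G,x}(s^*)$, and if from the left, with $X_v^{G,x}(s^*-)$, whence $X_v^{G,x}(s^*)\le X_v^{G,x}(s^*-)$ shows $X_v^{G,x}(s^*)$ also equals the infimum), so the infimum is a positive value. Therefore $\inf_{s\in[0,t)}X_v^{G,x}(s)>0$, i.e.\ $v\notin D_t^{(0)}$, which gives $D_t^{(0)}\subset D_{t-}$ and hence $D_t^{(0)}=D_{t-}$.

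To finish, iterate the cascade. The hypothesis says exactly that $\{v:x_v+B_v(t)-\sum_{u\in N_G^-(v)}c_{uv}\bone_{\{u\in D_{t-}\}}\le0\}\subset D_{t-}$, so $D_t^{(0)}=D_{t-}$ forces $D_t^{(1)}=D_{t-}$, and by induction $D_t^{(N)}=D_{t-}$ for all $N$, whence $D_t^{(\infty)}=D_{t-}$. If $D_{t-}\subsetneq D_t$, then $D_t\neq D_{t-}$, so physicality of the minimal solution (Definition~\ref{def:physical}(b)) would yield $D_t=D_t^{(\infty)}=D_{t-}$, a contradiction. Hence $D_t=D_{t-}$.

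\noindent\textbf{Main obstacle.} I expect the one genuinely delicate step to be the claim that a strictly positive element of $D([0,t])$ with only downward jumps has strictly positive infimum: a generic strictly positive right-continuous function on a compact interval may have zero infimum, so one really must exploit both that the jumps of $X_v^{G,x}$ are downward (losses only accumulate, i.e.\ $D$ is non-decreasing) and that $D$ is right-continuous, the latter being exactly Proposition~\ref{prop:MinSolution_RightCont}. The rest is bookkeeping with the definitions of $D_{t-}$, $D_t$ and the layers $D_t^{(N)}$. Alternatively one can bypass physicality and argue pathwise: show by induction that $\Gamma^{(N)}[\emptyset]_t\subset D_{t-}$ --- using that $s\mapsto\Gamma^{(N)}[\emptyset]_s$ is non-decreasing and right-continuous, right-continuity propagating through $\Gamma$ because running infima of right-continuous functions are right-continuous --- and then invoke $D_t=\bigcup_{N}\Gamma^{(N)}[\emptyset]_t$ from Proposition~\ref{prop:existence}; this variant does not even require robustness.
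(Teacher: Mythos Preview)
Your proof is correct and follows the same route as the paper: establish $D_t^{(0)}=D_{t-}$, iterate the cascade to get $D_t^{(N)}=D_{t-}$ for all $N$, and conclude via physicality that $D_t=D_{t-}$. The paper's proof is a three-line sketch that simply asserts $D_t^{(0)}=D_{t-}$ and the induction step; your version supplies the details the paper omits, in particular the argument that a strictly positive element of $D([0,t])$ with only downward jumps attains its infimum, which is indeed the point where the hypothesis $X_v^{G,x}(t-)>0$ is genuinely used (without it one only gets $X_v^{G,x}(t-)\ge0$, and the infimum over $[0,t)$ could be zero).
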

\begin{proof}
With the given assumption, we have $D_t^{(0)}=D_{t-}$. Then it can be inductively shown that $D_t^{(N)}=D_{t-}$ for any $N\ge1$. Therefore, $D_t=D_{t-}$ by the physicality condition.
\end{proof}

It has been shown in Proposition \ref{prop:MinimalIsPhysical} that the minimal solution is physical, under the robustness assumption. To complete the picture, we investigate whether physical solution must be the minimal one. As it turns out, the slightly stronger $\delta$-robustness assumption would yield an affirmative answer.
\begin{definition}
Consider a physical solution $(X,D)$. For any $v\in V$, define
\begin{equation*}\begin{aligned}
\tau_v:=\inf\{t\ge0:X_v(t)\leq0\},\quad k_v:=\min\{N\ge0:v\in D_{\tau_v}^{(N)}\}.    
\end{aligned}\end{equation*}
Here, we adopt the convention that $\inf\emptyset=\infty$. We say that $u$ defaults before $v$, denoted by $u\prec v$, if $(\tau_u,k_u)<(\tau_v,k_v)$ in the lexicographical order. That is,
\begin{equation*}\begin{aligned}
\tau_u<\tau_v\quad\text{or}\quad(\tau_u=\tau_v\quad\text{and}\quad k_u<k_v).  
\end{aligned}\end{equation*}
\end{definition}

\begin{proposition}
Let the configuration $(G,c,x,Z)$ drive $\delta$-robust systems, for some $\delta>0$. Let $(X,D)$ be a physical solution. Then it is the minimal solution.
\end{proposition}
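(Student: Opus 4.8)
The plan is to show that the given physical solution $(X^{G,x},D)$ coincides with the minimal solution $(X^{G,x},D^{\min})$, whose existence is guaranteed by Proposition~\ref{prop:existence}. Since $\mathrm C_t\subset\mathrm C_{[t,t+\delta]}$, $\delta$-robustness implies robustness, so the preceding propositions apply and $D^{\min}$ is itself physical, with $D^{\min}_t\subset D_t$ for all $t$; it remains to prove the reverse inclusion. Set $A:=\{t\ge0:D_t\neq D^{\min}_t\}$ and $\tau:=\inf A$, and suppose for contradiction that $A\neq\emptyset$.

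The first step is to show $D_\tau=D^{\min}_\tau=:D^*$, so that in fact $\tau\notin A$. By definition of $\tau$ the two solutions agree on $[0,\tau)$, hence their trajectories $X_v,Y_v$ agree there and $D_{\tau-}=D^{\min}_{\tau-}$. A short case analysis at time $\tau$ then finishes this step: if $D$ does not jump at $\tau$, then $D_\tau=D_{\tau-}=D^{\min}_{\tau-}\subset D^{\min}_\tau\subset D_\tau$; if both jump, physicality forces $D_\tau=D^{(\infty)}_\tau$ and $D^{\min}_\tau=(D^{\min})^{(\infty)}_\tau$, and these coincide because each iterate $D^{(N)}_\tau$ depends only on $x$, $B(\cdot\,\tau)$, $c$ and on the common trajectory restricted to $[0,\tau)$; the only remaining case, $D$ jumps but $D^{\min}$ does not, is excluded by noting that then $D^{(0)}_\tau=D_{\tau-}$ and $X_v(\tau-)>0$ for every $v\notin D_{\tau-}$, so the cascade iteration never grows and $D^{(\infty)}_\tau=D_{\tau-}$, contradicting the jump. (With the conventions of Remark~(3) the degenerate case $\tau=0$ is handled the same way.) Consequently $A\subset(\tau,\infty)$ and $A$ accumulates at $\tau$ from the right, so $A\cap(\tau,\tau+\delta)\neq\emptyset$.

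Next I would use $\delta$-robustness to localize. On the full-probability event where $H:=\bigcup_{r\in[\tau,\tau+\delta]}F_r$ contains no infinite weakly connected component, decompose $H=\bigsqcup_m H^m$ into finite components. Since a vertex that defaults during $[\tau,\tau+\delta]$ is fragile at its (finite) default time, one has $D_s\setminus D^*\subset H$ and $D^{\min}_s\setminus D^*\subset H$ for every $s\in[\tau,\tau+\delta]$. Fix a component $H^m$ and set $\Lambda_m:=H^m\setminus D^*$. By maximal connectedness $\partial^{\mathrm{out}}H^m$ is disjoint from $H$, so each in-neighbour of a vertex of $\Lambda_m$ lying outside $H^m$ is either already in $D^*$ or never defaults in $[\tau,\tau+\delta]$, for both solutions. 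Hence on $[\tau,\tau+\delta]$ the dynamics on $\Lambda_m$ decouples into a finite particle system on $G|_{\Lambda_m}$ with initial data $\tilde x_v:=X_v(\tau)=Y_v(\tau)>0$ and driving noises $\tilde B_v(\cdot):=B_v(\tau+\cdot)-B_v(\tau)$; this data is the same for the two solutions, and by tracking the cascade iteration through the restriction one checks that both $(D_{\tau+\cdot}\cap\Lambda_m)$ and $(D^{\min}_{\tau+\cdot}\cap\Lambda_m)$ are physical solutions of it.

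Finally I would invoke uniqueness of physical solutions for finite systems: a finite particle system has a unique physical solution, equal to its minimal solution. This is where finiteness is decisive — the default set is increasing and contained in the finite $\Lambda_m$, so it changes at most $|\Lambda_m|$ times, and a ``next jump'' induction applies: at the first time two physical solutions of the finite system differ, the case analysis of the second paragraph shows they in fact agree, and they then stay equal on a right-neighbourhood because the next jump is bounded away; iterating yields equality on the whole window. Applying this to every $H^m$ gives $D_s\cap H^m=D^{\min}_s\cap H^m$ for all $m$ and all $s\in[\tau,\tau+\delta]$, hence $D_s=D^{\min}_s$ on $[\tau,\tau+\delta]$, contradicting $A\cap(\tau,\tau+\delta)\neq\emptyset$. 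I expect the main obstacle to be the decoupling step: unlike the earlier right-continuity argument, which only had to control a single time, one must now control the entire interval $[\tau,\tau+\delta]$ at once — this is precisely what $\delta$-robustness (rather than plain robustness) provides — and one must verify that the cascade iteration is compatible with restriction to a component, which relies on the boundary vertices of that component being inert over the full window.
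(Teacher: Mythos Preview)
Your approach is correct but takes a genuinely different route from the paper. Both arguments begin the same way: set $\tau=\inf\{t:D_t\neq\underline D_t\}$ and use the common cascade iteration at $\tau$ to force $D_\tau=\underline D_\tau$. From there the paper proceeds by an \emph{infinite backward chain} argument: pick $v^{(0)}\in D_{\tau+\delta}\setminus\underline D_{\tau+\delta}$, and use the physicality of $D$ to extract $v^{(1)}\in N_G^-(v^{(0)})\cap(D_{\tau+\delta}\setminus\underline D_{\tau+\delta})$ with $v^{(1)}\prec v^{(0)}$ in the lexicographic default order; iterating yields an infinite strictly $\prec$-decreasing path of vertices, each fragile at its default time in $[\tau,\tau+\delta]$, hence an infinite weakly connected component of $\bigcup_{s\in[\tau,\tau+\delta]}F_s$, contradicting $\delta$-robustness directly. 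Your argument instead decomposes $\bigcup_{s\in[\tau,\tau+\delta]}F_s$ into finite components, decouples the dynamics on each, and invokes uniqueness for finite systems (proved separately by a next-jump induction). The paper's route is shorter and avoids both the decoupling verification and the auxiliary finite-case result, at the cost of introducing the order $\prec$; your route is more structural, parallels the right-continuity proof (Proposition~\ref{prop:MinSolution_RightCont}) more closely, and makes the role of $\delta$-robustness as a ``finite localization over a time window'' more transparent. One small repair in your first step: the assertion $X_v(\tau-)>0$ for $v\notin D_{\tau-}$ need not hold strictly; the clean way around this is to note that $D_\tau^{(\infty)}=\underline D_\tau^{(\infty)}$ always (same seed $D_\tau^{(0)}$), and that for any solution $\underline D_\tau^{(\infty)}=\underline D_\tau$ regardless of whether it jumps, by the fixed-point property.
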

\begin{proof}
Let $(\underline X,\underline D)$ be the minimal solution, and let $t_0:=\inf\{t\ge0,D_t\neq\underline D_t\}$. Suppose that, for contradiction, $t_0<\infty$. Then $D_s=\underline D_s$ and also $X_v(s)=\underline X_v(s)$ for any $s\in[0,t_0)$ and any $v\in V$. By continuity of $X_v$
this implies that $D_{t_0}^{(0)}=\underline D_{t_0}^{(0)}$. Iterating the physical condition then gives $D_{t_0}=\underline D_{t_0}$. Now, it follows from the definition of $t_0$ that, for any $\delta>0$, there must exist $t_\delta\in(t_0,t_0+\delta)$ and  $v_\delta^{(0)}\in D_{t_\delta}\setminus\underline D_{t_\delta}$. Let $(\tau,k):=(\tau_{v_\delta^{(0)}},k_{v_\delta^{(0)}})$, then we must have $v_\delta^{(0)}\in D_\tau^{(k)}\setminus\underline D_\tau$, which implies
\begin{equation}\begin{aligned}\label{eq:UniqueMinEq1}
&x_{v_\delta^{(0)}}+Z_{v_\delta^{(0)}}(\tau)-\sum_{u\in N_G^-({v_\delta^{(0)}})}c_{u{v_\delta^{(0)}}}\bone_{\{u\in D_{\tau}^{(k-1)}\}}\leq0\\
\quad\text{while}\quad&x_{v_\delta^{(0)}}+Z_{v_\delta^{(0)}}(\tau)-\sum_{u\in N_G^-({v_\delta^{(0)}})}c_{u{v_\delta^{(0)}}}\bone_{\{u\in\underline D_{\tau}\}}>0.
\end{aligned}\end{equation}
Hence there must exist $v_\delta^{(1)}\in N_G^-(v_\delta^{(0)})\cap(D_\tau^{(k-1)}\setminus\underline D_\tau$). In other words, $v_\delta^{(1)}$ is an element of $N_G^-(v_\delta^{(0)})$ that defaults before $v_\delta^{(0)}$ in the system $(X,D)$. Moreover, Equations \eqref{eq:UniqueMinEq1} imply that $v_\delta^{(0)}\in F_\tau\subset\bigcup_{t\in[t_0,t_0+\delta]}F_t$. Iterating this argument, we get a sequence $(v_\delta^{(n)})_{n\ge0}\subset D_{t_\delta}\setminus\underline D_{t_\delta}$ such that $v_\delta^{(n+1)}\in N_G^-(v_\delta^{(n)})$ and that $v_\delta^{(n+1)}$ defaults before $v_\delta^{(n)}$ in the system $(X,D)$. The existence of such a sequence implies that $\bigcup_{t\in[t_0,t_0+\delta]}F_t$ has an infinitely large weakly connected component, which is a contradiction to the $\delta$-robustness assumption. As a result, we must have $t_0=\infty$ and thus $D_t=\underline D_t$ for any $t\ge0$. That is, $(X,D)$ is the minimal solution.
\end{proof}
\begin{corollary}
\label{cor:uniqueness}
The physical solution is unique.    
\end{corollary}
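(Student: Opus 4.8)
The plan is to read this off directly from the immediately preceding proposition together with the (essentially definitional) uniqueness of the minimal solution. First I would recall that the minimal solution, if it exists, is automatically unique: if $(X^1,D^1)$ and $(X^2,D^2)$ were both minimal, then $D^1_t\subset D^2_t$ and $D^2_t\subset D^1_t$ for every $t\ge0$, forcing $D^1=D^2$, and then $X^1=X^2$ follows by substituting into \eqref{eq:1.1}, since the right-hand side of \eqref{eq:1.1} determines $X^{G,x}$ completely once $D$, $G$, $c$, $x$ and $B$ are fixed.

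Next I would invoke the preceding proposition: whenever $(G,c,x,B)$ drives a $\delta$-robust system for some $\delta>0$, every physical solution coincides with the minimal solution. Hence if $(X^{G,x,1},D^1)$ and $(X^{G,x,2},D^2)$ are two physical solutions, each equals the unique minimal solution $(\underline X^{G,x},\underline D)$, so $D^1=\underline D=D^2$ and therefore $X^{G,x,1}=X^{G,x,2}$ after plugging back into \eqref{eq:1.1}. This is exactly uniqueness of the physical solution, and combined with the earlier proposition showing that robustness makes the minimal solution physical, it completes the proof of Theorem~\ref{thm-main-1}.

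There is essentially no obstacle here, as the corollary merely packages results already established; the only point needing a line of care is the observation that the $X$-component is pinned down by the $D$-component through \eqref{eq:1.1}, which is transparent. If one also wished to cover a system that is robust but not $\delta$-robust for any $\delta>0$, one would rerun the contradiction argument of the preceding proposition with $\delta\downarrow0$, using the right-continuity of $t\mapsto D_t$ established in Proposition~\ref{prop:MinSolution_RightCont} to push a disagreement at time $t_0+\delta$ back to a genuine disagreement on a weakly connected component forced to be infinite; but since the main theorems are stated under $\delta$-robustness, this refinement is not required for the corollary as stated.
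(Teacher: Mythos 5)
Your proof is correct and takes essentially the same approach as the paper, which leaves this corollary unproved precisely because it follows at once from the immediately preceding proposition (every physical solution equals the minimal solution) together with the definitional uniqueness of the minimal solution. The one line of care you flag — that the $X$-component is pinned down by the $D$-component via \eqref{eq:1.1} — is exactly the right thing to note.
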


\subsection{Default trees and recovery of locality}\label{sec:Trees}
Before discussing the proof of Theorem \ref{thm-main-2}, we need a more accurate description of the interaction among the particles that are potentially distant away from each other, which is of interest on its own. This is analyzed via the growth of \textit{default trees}, defined in the following.

A tree is a directed acyclic graph in which each vertex is the out-neighbor of exactly one vertex, except for the root vertex which is not the out-neighbor of any edges. The out-neighbors of a vertex is called its \emph{children}. Vertices that have no out-neighbors in the tree are called the \emph{leaves} of the tree.

\begin{definition}
Let $(X,D)$ be a physical solution associated with a problem configuration $(G,c,x,Z)$. For any vertex $v_0\in V$ such that $\tau_{v_0}<\infty$, the \emph{default tree} $\cT(G,v_0)$ rooted at $v_0$ is a subgraph of $G$ defined recursively as follows:
\begin{enumerate}
    \item The root is $v_0$.
    \item For any node $v$,  its children are the vertices $u \in N_G^-(v)$ such that $u \prec v$.
\end{enumerate}
If $v_0$ is such that $\tau_{v_0}=\infty$, we define $\cT(G,v_0)$ to be the empty graph.
\end{definition}




\begin{lemma}
\label{lma:finitetree}
Let the configuration $(G,c,x,Z)$ drive $\delta$-robust systems for some $\delta>0$, and let a sequence of configurations $(G_n, c^n, x^n, Z^n)_{n \ge 1}$  converges  to $(G, c, x, Z)$ locally in $\cG_*[\mathbb{R} \times \mathcal{D}]$ 
 almost surely as $n\to\infty$. Then with probability one, for any fixed $v_0 \in V$ satisfying either: 
\begin{itemize}
    \item $\tau_{v_0}<\infty$, or
    \item $\tau_{v_0}=\infty$ but $\lim_{n\to\infty}\tau_{v_0}^n$ exists and is finite,
\end{itemize}
there exists a finite subset $V_0$ of $V$ such that:
\begin{enumerate}
    \item $v_0\in V_0$ and $\cT(G,v_0)\subset V_0$.
    \item For all sufficiently large $n$, $\cT(G_n,v_0)\subset V_0$.
\end{enumerate}
\end{lemma}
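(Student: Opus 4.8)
The plan is to control the growth of the default tree $\cT(G,v_0)$ by a cluster-exploration argument in the fragile set, exploiting $\delta$-robustness, and then to transfer the containment to the approximating configurations using local convergence. First I would fix an outcome in the almost-sure event on which (i) the system driven by $(G,c,x,B)$ is $\delta$-robust, (ii) the convergence $(G_n,c^n,x^n,B^n)\to(G,c,x,B)$ in $\cG_*[\RR\times\cC]$ holds, and (iii) the two-dimensional-Brownian-motion non-hitting event from the preceding lemma holds, so that cascades are triggered one vertex at a time. On this event, consider $v_0$ with $\tau_{v_0}=:T<\infty$ (the second bullet is handled symmetrically, using $T:=\lim_n\tau_{v_0}^n<\infty$). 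The key observation is that every vertex $v$ appearing in $\cT(G,v_0)$ satisfies $v\prec v_0$, hence $\tau_v\le\tau_{v_0}=T$; moreover, tracing the chain of ancestors from $v$ up to $v_0$ produces a path $v=u_k\to u_{k-1}\to\cdots\to u_0=v_0$ in $G$ along which default times are non-increasing and each $u_{j+1}\in N_G^-(u_j)$ with $u_{j+1}\prec u_j$. I want to argue that such a path cannot be arbitrarily long.

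The main step is a \emph{finite-cascade-length estimate}: I claim there is a finite $L=L(\omega,v_0)$ such that any directed chain $u_0=v_0,u_1,\dots,u_m$ in $G$ with $u_{j+1}\in N_G^-(u_j)$ and $u_{j+1}\prec u_j$ for all $j$ must have $m\le L$. Suppose not; then for every $m$ there is such a chain of length $m$. Since all these vertices default no later than time $T$, and since (by right-continuity of $D$ and the single-trigger lemma) the defaults in $[0,T]$ accumulate at only finitely many times $0\le t_1<\dots<t_p\le T$ (here I would use that $\delta$-robustness, via the already-proved propositions, prevents infinitely many simultaneous defaults and forces $D$ to be right-continuous with isolated jump times on any compact interval), some sub-interval $[t_i,t_i+\delta')\subset[0,T+\delta]$ of length at most $\delta$ contains an arbitrarily long sub-chain of the $u_j$'s — because along the chain $\tau_{u_j}$ is non-increasing and takes values in the finite set $\{t_1,\dots,t_p\}$, so by pigeonhole a constant-$\tau$ run of length $\lceil m/p\rceil\to\infty$ occurs. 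Each vertex in such a run lies in $\bigcup_{s\in[t_i,t_i+\delta]}F_s$ and the run is a connected path there; letting $m\to\infty$ exhibits an infinite weakly connected component of $\bigcup_{s\in[t_i,t_i+\delta]}F_s$, contradicting $\delta$-robustness. Hence $L<\infty$, and therefore $\cT(G,v_0)\subset B_G(v_0,L)=:V_0$, which is finite because $G$ is locally finite; this proves item (1).

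For item (2) I would first note that $\tau_v$, $k_v$ and the relation $\prec$ restricted to any fixed finite ball depend only on the restriction of the configuration to a slightly larger ball — specifically, by the pathwise construction of the minimal/physical solution (Remark~\ref{rem:pathwise1} and the cascade iteration in Definition~\ref{def:physical}), the quantities $\{(\tau_v,k_v,X_v):v\in B_G(v_0,L)\}$, and hence the tree $\cT(G,v_0)$, are determined by $(G,c,x,B)$ restricted to $B_G(v_0,L+1)$ together with the (finitely many) hitting times of the boundary vertices. Local convergence in $\cG_*[\RR\times\cC]$ gives, for $n$ large, an isomorphism identifying $B_{G_n}(v_0,L+1)$ with $B_G(v_0,L+1)$ under which $(c^n,x^n,B^n)\to(c,x,B)$ uniformly on that finite ball; since the trajectories, hitting times, and cascade indices depend continuously on these finite data away from the non-generic coincidences excluded above, for all sufficiently large $n$ one has $\tau_{v_0}^n<\infty$ (in the first bullet) or $\tau_{v_0}^n\to T$ (in the second), and the same chain-length bound $L$ applies to $(G_n,c^n,x^n,B^n)$, giving $\cT(G_n,v_0)\subset B_{G_n}(v_0,L)= B_G(v_0,L)\subset V_0$.

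The hard part will be making the last continuity claim rigorous — namely that the default tree is a \emph{locally continuous} functional of the configuration. The danger is a vertex $v$ near the boundary whose default is ``marginal'' (e.g.\ $\tau_v=\infty$ in $G$ but finite in $G_n$, or a child relation $u\prec v$ that flips in the limit); one must rule this out by combining strict inequalities away from the finitely many measure-zero coincidence events (the single-trigger lemma and non-hitting of points by $2$-$d$ Brownian motion) with the fact that after the last accumulation time $t_p\le T$ the ``survivors'' in $B_G(v_0,L)$ have strictly positive distance from default, which is stable under small perturbations of the finite data; this is exactly where the robustness hypothesis and the genericity of the noise do the essential work, and it is the step I expect to require the most care.
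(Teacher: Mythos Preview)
Your argument for item~(1) rests on the claim that ``the defaults in $[0,T]$ accumulate at only finitely many times $t_1<\cdots<t_p$,'' which is false on an infinite graph: the propositions you cite give that each jump $D_t\setminus D_{t-}$ is finite and that $t\mapsto D_t$ is right-continuous, but the set of jump times is typically dense in $(0,\infty)$. This is repairable --- for instance, apply K\"onig's lemma to an infinite $\cT(G,v_0)$ to extract an infinite descending chain with $\tau_{u_j}\downarrow\tau_\infty$, so that eventually all $u_j$ lie in $F_{[\tau_\infty,\tau_\infty+\delta]}$ and form an infinite connected path there, contradicting $\delta$-robustness --- so item~(1) survives in spirit.

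The substantive gap is item~(2). You assert that $\{(\tau_v,k_v):v\in B_G(v_0,L)\}$ is determined by the configuration restricted to $B_G(v_0,L+1)$, and that by continuity the same chain-length bound $L$ transfers to $(G_n,c^n,x^n,B^n)$. Neither step is justified: default times inside the ball can depend on losses propagating in from arbitrarily far away, and the approximating configurations are only assumed robust (not $\delta$-robust), so you cannot rerun your chain-length argument directly in $G_n$. Invoking continuity of $\cT(G_n,v_0)$ on a fixed finite window presupposes that $\cT(G_n,v_0)$ already sits in that window, which is precisely the conclusion. The paper avoids this circularity by building $V_0$ not as a ball but as a finite union of \emph{fragile components of the limiting configuration}: it partitions $[0,(M{+}1)\delta]$ into $\delta$-intervals, starts from $v_0$'s component in $F_{[M\delta,(M+1)\delta]}$, and recursively adjoins the finitely many components of $F_{[m\delta,(m+1)\delta]}$ meeting the $1$-enlargement of components already selected at higher levels. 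Because non-fragility in an interval is an \emph{open} condition on $(c,x,B)$, for $n$ large every vertex of $B_G(V_0,1)$ that is $G_n$-fragile in $[m\delta,(m+1)\delta]$ is already $G$-fragile there; a top-down induction on $\cT(G_n,v_0)$ then traps each node in one of the selected limiting components, keeping the whole argument inside the fixed finite set $V_0$ without ever invoking continuity of the solution map. (A further ingredient you omit is the one-sided bound $\limsup_n\tau_{v_0}^n\le\tau_{v_0}$, obtained by a separate bottom-up induction on the finite tree $\cT(G,v_0)$; this is needed to locate the $\delta$-interval in which the $G_n$-tree starts.)
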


\begin{proof}[Proof of Lemma \ref{lma:finitetree}]
\noindent\textbf{Step 1.} In the case that $\tau_{v_0}<\infty$, we take $M$ to be the unique integer such that $M\delta\leq\tau_{v_0}<(M+1)\delta$. In the case that $\tau_{v_0}=\infty$ but $\lim_{n\to\infty}\tau_{v_0}^n<\infty$, we take $M$ to be the unique integer such that $M\delta\leq\tau:=\lim_{n\to\infty}\tau_{v_0}^n<(M+1)\delta$. Then for each $m\in\{0,1,2,...,M\}$, the set $F_{[m\delta,(m+1)\delta]}$ admits the following decomposition into its weakly connected components
\begin{equation*}\begin{aligned}
F_{[m\delta,(m+1)\delta]}=\bigcup_{l\in\NN} F_{[m\delta,(m+1)\delta]}^l    
\end{aligned}\end{equation*}
such that each $F_{[m\delta,(m+1)\delta]}^l$ is finite. In the case that $M\delta\leq\tau_{v_0}<(M+1)\delta$, we have $v_0\in F_{\tau_{v_0}}\subset F_{[M\delta,(M+1)\delta]}$. In the case that $M\delta\leq\tau=\lim_{n\to\infty}\tau_{v_0}^n<(M+1)\delta$, we see that
\begin{equation*}\begin{aligned}
x_{v_0}^n+Z_{v_0}^n(\tau_{v_0}^n-)\ge0\quad\text{and}\quad x_{v_0}^n+Z_{v_0}^n(\tau_{v_0}^n)\leq\sum_{u\in N_G^-(v_0)}c_{uv_0}^n.    
\end{aligned}\end{equation*}
Taking $n\to\infty$, applying Lemma \ref{lem:D_Conv_Pointwise2} and making use of the assumption that $Z_{v_0}$ does not jump upward (which implies $Z_{v_0}(\tau)\leq Z_{v_0}(\tau-)$), we obtain
\begin{equation*}\begin{aligned}
x_{v_0}+Z_{v_0}(\tau-)\ge0\quad\text{and}\quad x_{v_0}+Z_{v_0}(\tau)\leq\sum_{u\in N_G^-(v_0)}c_{uv_0}.    
\end{aligned}\end{equation*}
and thus $v_0\in F_\tau\subset F_{[M\delta,(M+1)\delta]}$.
Therefore, in both cases, we can take $l_M$ to be such that $v_0\in F_{[M\delta,(M+1)\delta]}^{l_M}$. Define $\mathcal{L}_M := \{l_M\}$, and recursively for $m = M-1, \dots, 0$ define
\begin{equation*}\begin{aligned}
\cL_m:=\{l\in\NN:\,\exists m'\in\{m+1,...,M\}\,\exists l'\in\cL_{m'},F_{[m\delta,(m+1)\delta]}^l\cap B_G(F_{[m'\delta,(m'+1)\delta]}^{l'},1)\neq\emptyset\}. 
\end{aligned}\end{equation*}
In particular, if $M\ge1$ and either $\tau_{v_0}=M\delta$ or $\tau=\lim_{n\to\infty}\tau_{v_0}^n=M\delta$, there also exists $l\in\cL_{M-1}$ such that $v_0\in F_{[(M-1)\delta,M\delta]}^{l}$. Note that each $\cL_m$ must be finite when the graph $G$ is locally finite. Finally, we define 
\begin{equation*}\begin{aligned}
V_0=\bigcup_{m=0}^M\bigcup_{l\in\cL_m}F_{[m\delta,(m+1)\delta]}^l,    
\end{aligned}\end{equation*}
which is then a finite set.\\

\noindent\textbf{Step 2.} In this step, we verify that $\cT(G,v_0)\subset V_0$. Assume $\tau_{v_0} < \infty$ (otherwise the tree is empty). We proceed by top-down induction on $\mathcal{T}(G, v_0)$. For the base case, it is clear that $v_0\in F_{[M\delta,(M+1)\delta]}^{l_M}\subset V_0$. For the inductive step, let's assume that $v,v'$ are nodes such that $\tau_v\in[m\delta,(m+1)\delta)$, $v\in F_{[m\delta,(m+1)\delta]}^{l_v}$ where $l_v\in\cL_m$, $\tau_{v'}\in[m'\delta,(m'+1)\delta)$, and that $v'$ is a child of $v$ in the tree $\cT(G_n,v_0)$. If $m'=m$, then $v'\in F_{[m\delta,(m+1)\delta]}$, and since $v'\in N_G(v)$, it must hold that $v'\in F_{[m\delta,(m+1)\delta]}^{l_v}$ by the maximal connectedness of the latter. If $m'<m$, then there exists $l$ such that $v'\in F_{[m'\delta,(m'+1)\delta]}^l\cap B_G(F_{[m\delta,(m+1)\delta]}^{l_v},1)$, which implies $l\in\cL_{m'}$ by the definition of $\cL_{m'}$. The induction argument is now complete, which implies that $\cT(G,v_0)\subset V_0$. In particular, $\cT(G,v_0)$ is a finite set.\\

\noindent\textbf{Step 3.} As an intermediate step, we prove that $\tau_{v_0}\ge\limsup_{n\to\infty}\tau_{v_0}^n$. Since the desired inequality is trivial if $\tau_{v_0}=\infty$, we focus on the case in which $\tau_{v_0}<\infty$. As $\cT(G,v_0)$ is a finite tree, we can use a bottom-up induction to show that $\tau_v\ge\limsup_{n\to\infty}\tau_v^n$ for all $v\in\cT(G,v_0)$. For the base case, suppose $v$ is a leaf node of $\cT(G,v_0)$. Then necessarily
$X_v(\tau_v)=x_v+Z_v(\tau_v)\leq0$. With probability $1$, for any $\Delta>0$, there exists $t_\Delta\in(\tau_v,\tau_v+\Delta)$ ,which can be taken to be a continuity point of $Z_v(\cdot)$, such that $x_v+Z_v(t_\Delta)<0$. For all sufficiently large $n$, it must hold that $x_v^n+Z_v^n(t_\Delta)<0$ and thus $\limsup_{n\to\infty}\tau_v^n\leq\tau_v+\Delta$. Letting $\Delta\downarrow0$ gives $\tau_v\ge\limsup_{n\to\infty}\tau_v^n$. For the inductive step, we assume that $v$ is a node such that any of its children $u$ in $\cT(G,v_0)$ satisfies $\tau_u\ge\limsup_{n\to\infty}\tau_u^n$. Similarly, as
\begin{equation*}\begin{aligned}
X_v(\tau_v)=x_v+Z_v(\tau_v)-\sum_{u\in N_G^-(v)}c_{uv}\bone_{\{u\in D_{\tau_v}\}}\leq0,  
\end{aligned}\end{equation*}
with probability $1$, for any $\Delta>0$, there exists $t_\Delta\in(\tau_v,\tau_v+\Delta)$, that can be taken to be a continuity point of $Z_v(\cdot)$, such that
\begin{equation*}\begin{aligned}
x_v+Z_v(t_\Delta)-\sum_{u\in N_G^-(v)}c_{uv}\bone_{\{u\in D_{\tau_v}\}}<0.
\end{aligned}\end{equation*}
For all sufficiently large $n$, it holds that $\tau_u^n\leq t_\Delta$ for all children $u$ of $v$ in $\cT(G,v_0)$ and 
\begin{equation*}\begin{aligned}
x_v^n+Z_v^n(t_\Delta)-\sum_{u\in N_G^-(v)}c_{uv}^n\bone_{\{u\in D_{t_\Delta}^n\}}<0,
\end{aligned}\end{equation*}
which, in turn, implies $\limsup_{n\to\infty}\tau_v^n\leq t_\Delta\leq\tau_v+\Delta$. Letting $\Delta\downarrow0$ gives $\tau_v\ge\limsup_{n\to\infty}\tau_v^n$. The induction argument is thus completed.\\

\noindent\textbf{Step 4.} Now, we are going to verify that $V_0$ satisfies the desired properties. First, as $V_0$ is a finite set, there exists a sufficiently large $k\in\NN$ such that $V_0\subset B_G(o,k)$, where $o$ is the root of $G$. As $G_n$ converges locally to $G$ in $\cG_*$, there exists $N_0\in\NN$ such that $B_{G_n}(o_n,k+2)=B_G(o,k+2)$ for any $n\ge N_0$. Second, we note that for any $v\in B_G(V_0,1)$ such that $v\notin F_{[m\delta,(m+1)\delta]}$, it holds that
\begin{equation*}\begin{aligned}
\sup_{s\in[m\delta,(m+1)\delta]}(x_v+Z_v(s))<0\quad\text{or}\quad\inf_{s\in[m\delta,(m+1)\delta]}(x_v+Z_v(s))>\sum_{u\in N_G^-(v)}c_{uv}.
\end{aligned}\end{equation*}
As $Z_v(\cdot)$ is càdlàg, there exists a $\delta_v\in(0,\delta)$ such that 
\begin{equation*}\begin{aligned}
\sup_{s\in[m\delta-\delta_v,(m+1)\delta+\delta_v]}(x_v+Z_v(s))<0\quad\text{or}\quad\inf_{s\in[m\delta-\delta_v,(m+1)\delta+\delta_v]}(x_v+Z_v(s))>\sum_{u\in N_G^-(v)}c_{uv}.
\end{aligned}\end{equation*}
By Lemma \ref{lem:D_Conv_PointwiseUniform1}, we can take $N_1\ge N_0$ such that for all $n\ge N_1$:
\begin{equation*}\begin{aligned}
\sup_{s\in[m\delta,(m+1)\delta]}(x_v^n+Z_v^n(s))<0\quad\text{or}\quad\inf_{s\in[m\delta,(m+1)\delta]}(x_v^n+Z_v^n(s))>\sum_{u\in N_G^-(v)}c_{uv}^n
\end{aligned}\end{equation*}
for all $v\in B_G(V_0,1)$ such that $v\notin F_{[m\delta,(m+1)\delta]}$, and all $m\in\{0,1,...,M\}$. In other words, for any $t\in[0,(M+1)\delta]$, any $v\in B_G(V_0,1)$ and any $n\ge N_1$, $v$ being fragile subject to $(G_n,c^n,x^n,Z^n)$ at time $t\in[m\delta,(m+1)\delta]$ implies that $v\in F_{[m\delta,(m+1)\delta]}$.

For $n\ge N_1$, we are going to proceed similarly as in Step 2 and use a top-down induction on the tree $\cT(G_n,v_0)$ to show that, for any $v\in\cT(G_n,v_0)$, if $m\in\{0,1,...,M\}$ is such that $\tau_v^n\in[m\delta,(m+1)\delta)$, then there exists $l\in\cL_m$ such that $v\in F_{[m\delta,(m+1)\delta]}^l$. For the base case, we already know that $v_0\in F_{[M\delta,(M+1)\delta]}^{l_M}$. If $\tau_{v_0}^n\in[m\delta,(m+1)\delta)$ for some $m<M$, it follows from Step 3 that $v_0\in F_{[m\delta,(m+1)\delta]}$, which implies the existence of $l$ such that $v_0\in F_{[m\delta,(m+1)\delta]}^l\cap B_G(F_{[M\delta,(M+1)\delta]}^{l_M},1)\neq\emptyset$. For the inductive step, we assume that $v,v'$ are vertices such that $\tau_v^n\in[m\delta,(m+1)\delta)$, $v\in F_{[m\delta,(m+1)\delta]}^{l_v}$, where $l_v\in\cL_m$, $\tau_{v'}^n\in[m'\delta,(m'+1)\delta)$, and that $v'$ is a child of $v$ in the tree $\cT(G_n,v_0)$. If $m'=m$, then $v'\in F_{[m\delta,(m+1)\delta]}\cap B_G(F_{[m\delta,(m+1)\delta]}^{l_v},1)$, and it must hold that $v'\in F_{[m\delta,(m+1)\delta]}^{l_v}$ by the maximal connectedness of the latter. If $m'<m$, then there exists $l$ such that $v'\in F_{[m'\delta,(m'+1)\delta]}^l\cap B_G(F_{[m\delta,(m+1)\delta]}^{l_v},1)$, which implies $l\in\cL_{m'}$ by the definition of $\cL_{m'}$. The induction argument is now completed, which implies that $\cT(G_n,v_0)\subset V_0$.
\end{proof}

\subsection{Convergence of physical solutions}
\label{sec:proof of main2}
Within this subsection, we will use $(\underline X,\underline D)$ to denote the unique physical solution, as non-physical solutions can potentially occur in the argument.

Below, we present and prove a result that is more general than Theorem \ref{thm-main-2}. Here, we treat the driving noises $(Z_v)_{v\in V}$, which are generic noise processes, as part of the inputs to the equations \eqref{eq:1.1}. It turns out that the only technical assumptions that are essential are the $\delta$-robustness of the limiting configuration $(G,c,x,Z)$.


\begin{proof}[Proof of Theorem \ref{thm-main-2}]
\noindent\textbf{Step 1.} We start by showing that the sequence $\{\cL(G_n,c^n,x^n,Z^n,\underline X^n,\underline D^n)\}_{n\ge1}$ is tight in $\cP(\cG_*[\RR\times\cD^3])$. By the Skorokhod Representation Theorem, we can assume without loss of generality that $(G_n,c^n,x^n,Z^n)$ converges almost surely to $(G,c,x,Z)$ in $\cG_*[\RR\times\cD]$ as $n\to\infty$.

Fix any $\varepsilon\in(0,1)$. Since local convergence holds almost surely, we can find a compact subset $\cK_0^\varepsilon$ of $\cG_*[\RR\times\cD]$ together with a sequence $(M_m)_{m\ge1}$ of positive real numbers such that
\begin{equation*}\begin{aligned}
\inf_n\PP\Big[(G_n,c^n,x^n,Z^n)\in\cK_0^\varepsilon,\,\max_{v\in B_G(o,m)}\sum_{u\in N_G^-(v)}c_{uv}\leq M_m\Big]\ge1-\frac{\varepsilon}{2}.    
\end{aligned}\end{equation*}
We define the cumulative loss process
\begin{equation*}\begin{aligned}
\underline L_v^{n}(t):=\sum_{u\in N_{G^n}^-(v)}c_{uv}^n\bone_{\{u\in\underline D_t^n\}},\quad v\in V_n,
\end{aligned}\end{equation*}
which are monotone, right-continuous functions in $D([-1,\infty))$ satisfying $\underline{L}_v^n(t) = 0$ for $t < 0$. For each $m\ge1$, we can take $N_m\in\NN$ such that 
\begin{equation*}\begin{aligned}
\PP\Big[B_{G_n}(o,m+1)=B_G(o,m+1),\quad\max_{v\in B_{G_n}(o_n,m)}\sum_{u\in N_{G_n}^-(v)}c_{uv}^n\leq M_m+1\Big]\ge1-\frac{\varepsilon}{2^{m+2}}   
\end{aligned}\end{equation*}
for all $n\ge N_m$. As each $B_{G_n}(o_n,m+1)$ is a finite graph, by enlarging $M_m$, we can obtain
\begin{equation*}\begin{aligned}
\PP\Big[\max_{v\in B_{G_n}(o_n,m)}\|\underline L_v^n\|_\infty\leq M_m\Big]\ge1-\frac{\varepsilon}{2^{m+1}} 
\end{aligned}\end{equation*}
for all $n\ge1$. Taking a union bound over $m\ge1$, we obtain
\begin{equation*}\begin{aligned}
\inf_n\PP\Big[\max_{v\in B_{G_n}(o_n,m)}\|\underline L_v^n\|_\infty\leq M_m,\quad\forall m\ge1\Big]\ge1-\frac{\varepsilon}{2}   
\end{aligned}\end{equation*}
We further notice that $t\mapsto\bone_{\{v\in\underline D^n_t\}}$ is non-decreasing and bounded by $1$. Combining the above, we obtain 
\begin{equation*}\begin{aligned}
&\inf_n\PP\Big[(G_n,c^n,x^n,Z^n)\in\cK_0^\varepsilon,\,\underline L_v^n\in\cM(M_m)\text{ and }\underline D_v^n\in\cM(1),\,\forall v\in B_{G_n}(o_n,m)\,\forall m\ge1\Big]\\
&\ge1-\varepsilon, 
\end{aligned}\end{equation*}
which implies the tightness of $\{\cL(G_n,c^n,x^n,Z^n,\underline L^n,\underline D^n)\}_{n\ge1}$ in $\cP(\cG_*[\RR\times\cD^3])$ by Theorem \ref{thm:D_Compact_2} and Lemma \ref{lem:MarkedGraph_Comp}. As 
\begin{equation*}\begin{aligned}
\underline X_v^n=x_v^n+Z_v^n-\underline L_v^n,\quad v\in V_n,    
\end{aligned}\end{equation*}
we obtain that $\{\cL(G_n,c^n,x^n,Z^n,\underline X^n,\underline D^n)\}_{n\ge1}$ is tight in $\cP(\cG_*[\RR\times\cD^3])$ by combining Lemma \ref{lem:MarkedGraph_ContMap} Lemma \ref{lem:ContMapThm} and \cite[Lemma 12.7.3]{whitt2002stochastic}.\\

\noindent\textbf{Step 2.} By the Skorokhod Representation Theorem, we can find a $\cG_*[\RR\times\cD^3]$-valued random element $(G,c,x,Z,X,D)$ such that $(G_n,c^n,x^n,Z^n,\underline X^n,\underline D^n)\stackrel{a.s.}{\to}(G,c,x,Z,X,D)$ in $\cG_*[\RR\times\cD^3]$ as $n\to\infty$. In this step, we verify that $(X,D)$ is a solution to equations \eqref{eq:1.1}. As
\begin{equation*}\begin{aligned}
\underline X_v^n(t)&=x_v^n+Z_v^n(t)-\sum_{u\in N_{G_n}^-(v)}c_{uv}^n\bone_{\{u\in D_t^n\}},\quad v\in V_n,
\end{aligned}\end{equation*}
by Theorem \ref{thm:D_Conv_Pointwise}, for all $t\in[-1,\infty)$ such that $t$ is a continuity point of $(X_v(\cdot),\bone_{\{v\in D_\cdot\}})$ for all $v\in V$ (the set of such $t$ is co-countable and thus dense in $[-1,\infty)$), it holds that
\begin{equation*}\begin{aligned}
X_v(t)&=x_v+Z_v(t)-\sum_{u\in N_G^-(v)}c_{uv}\bone_{\{u\in D_t\}},\quad v\in V.
\end{aligned}\end{equation*}
By the right continuity of $t\mapsto X_v(t)$ and $t\mapsto D_t$, the above equation actually holds for all $t\in[-1,\infty)$. To verify that
\begin{equation*}\begin{aligned}
D_t&=\{v\in V:\,\inf_{s\in[0,t]}X_v(s)\leq0\},\quad\forall t\in[-1,\infty),    
\end{aligned}\end{equation*}
it is sufficient to show that $\tau_v:=\inf\{t\ge0:X_v(t)\leq0\}$ satisfies $\tau_v=\lim_{n\to\infty}\underline\tau_v^n$ for any $v\in V$, as
\begin{equation*}\begin{aligned}
\{v\in V:\,\inf_{s\in[0,t]}X_v(s)\leq0\}=\{v\in V:\tau_v\leq t\}   
\end{aligned}\end{equation*}
and
\begin{equation*}\begin{aligned}
D_t=\{v\in V:\lim_{n\to\infty}\underline\tau_v^n\leq t\}    
\end{aligned}\end{equation*}
by Corollary \ref{cor:Conv_JumpTimes}.
To prove $\tau_v=\lim_{n\to\infty}\underline\tau_v^n$, we can use the same argument as in the proof of \cite[Proposition 5.8]{DELARUE2015Spike}, as the limit process $X_v$ satisfies the downward crossing property.\\

\noindent\textbf{Step 3.} We now show that $(X, D)$ coincides with the minimal solution $(\underline{X}, \underline{D})$. By the right-continuity of both $t\mapsto D_t$ and $t\mapsto\underline D_t$, it suffices to show that $\underline\tau_v=\tau_v$ for any $v\in V$, where
\begin{equation*}\begin{aligned}
\underline\tau_v:=\inf\{t\ge0:\underline X_v(t)\leq0\}.  
\end{aligned}\end{equation*}
Note that as $\underline D$ is the minimal solution, we already have $\underline D_t\subset D_t$ for any $t\ge0$ and thus $\underline\tau_v\ge\tau_v$ for any $v\in V$.

We start with the assumption that there exists $v_0\in V$ such that $\underline\tau_{v_0}>\tau_{v_0}$ and aim to arrive at a contradiction. First, let $V_0$ be the finite subset and $N_0\in\NN$ be the threshold given by Lemma \ref{lma:finitetree}. We can find a sufficiently large $k\in\NN$ such that $V_0\subset B_G(o,k)$. Then, there exists $N_1\ge N_0$ such that $B_{G_n}(o_n,k+2)=B_G(o,k+2)$ for all $n\ge N_1$. We take a $\Delta>0$ sufficiently small so that $2\Delta<\underline\tau_v-\tau_v$ for any $v\in B_G(V_0,1)$ such that $\underline\tau_v>\tau_v$. We then take $\varepsilon>0$ sufficiently small so that 
\begin{equation*}\begin{aligned}
\varepsilon<\min_{v\in B_G(V_0,1):\underline\tau_v<\infty}\inf_{s\in[0,\underline\tau_v-\Delta]}\underline X_v(s)\wedge\min_{u,v\in B_G(V_0,1):(u,v)\in E,c_{uv}>0}c_{uv}.
\end{aligned}\end{equation*}
Next, there exists $N_2\ge N_1$ such that for all $n\ge N_2$ and all $v\in B_G(V_0,1)$ such that $\underline\tau_v<\infty$,
\begin{equation*}\begin{aligned}
|x_v^n-x_v|+\sum_{u\in N_v^-(G)}|c_{uv}^n-c_{uv}|<\frac\varepsilon4.
\end{aligned}\end{equation*}
Since each $Z_v(\cdot)$ is right continuous with no upward jumps, there exists $\Delta'<\frac14\Delta$ such that
\begin{equation*}\begin{aligned}
\sup_{s,t\in[0,\tau_v+\Delta],\,t\leq s\leq t+2\Delta'}(Z_v(s)-Z_v(t))_+\leq\frac\varepsilon4 
\end{aligned}\end{equation*}
for any $v\in B_G(V_0,1)$ such that $\tau_v<\infty$. Furthermore, as $\lim_{n\to\infty}\underline\tau_v^n=\tau_v$, by Lemma \ref{lem:D_Conv_Pointwise2} and the assumption that $Z_v(\cdot)$ has no upward jumps, there exists $N_3\ge N_2$ such that, for all $n\ge N_3$ and all $v\in B_G(V_0,1)$, $|\tau_v-\underline\tau_v^n|<\Delta'$ and
\begin{equation*}
(Z_v(\underline\tau_v^n)-Z_v^n(\underline\tau_v^n))_+\leq\frac{\varepsilon}{4}.    
\end{equation*}

The claim is that, for all $v\in V_0$ such that $\underline\tau_v>\tau_v$, there must exist $v'\in N_G^-(v)$ such that $v'$ is a child of $v$ in $\cT(G_n,v)$ for all $n\ge N_3$ and that $\underline\tau_{v'}>\tau_{v'}$. Indeed, consider the default time $(\underline\tau_v^n,k_v^n)$ of $v$ in the system $(\underline X^n,\underline D^n)$. Then
\begin{equation*}\begin{aligned}
0\ge x_v^n+Z_v^n(\underline\tau_v^n)-\sum_{u\in N_G^-(v)}c_{uv}^n\bone_{\{u\in\underline D_{\underline\tau_v^n}^{n(k_v^n-1)}\}}.
\end{aligned}\end{equation*}
However,
\begin{equation*}\begin{aligned}
\varepsilon&<\inf_{s\in[0,\underline\tau_v-\Delta]}\underline X_v(s)\leq\underline X_v(\tau_v+2\Delta')=x_v+Z_v(\tau_v+2\Delta')-\sum_{u\in N_G^-(v)}c_{uv}\bone_{\{u\in\underline D_{\tau_v+2\Delta'}\}}\\
&\leq x_v^n+Z_v^n(\underline\tau_v^n)-\sum_{u\in N_G^-(v)}c_{uv}^n\bone_{\{u\in\underline D_{\underline\tau_v^n}^{n(k_v^n-1)}\}}+|x_v-x_v^n|+(Z_v(\tau_v+2\Delta')-Z_v(\underline\tau_v^n))_+\\
&+(Z_v(\underline\tau_v^n)-Z_v^n(\underline\tau_v^n))_++\sum_{u\in N_G^-(v)}|c_{uv}^n-c_{uv}|+\sum_{u\in N_G^-(v)}c_{uv}\bone_{\{u\in\underline D_{\underline\tau_v^n}^{n(k_v^n-1)}\setminus\underline D_{\tau_v+2\Delta'}\}}\\
&\leq0+\frac34\varepsilon+\sum_{u\in N_G^-(v)}c_{uv}\bone_{\{u\in\underline D_{\underline\tau_v^n}^{n(k_v^n-1)}\setminus\underline D_{\tau_v+2\Delta'}\}},
\end{aligned}\end{equation*}
which implies the existence of $v'\in N_G^-(v)$ such that $v'\in\underline D_{\underline\tau_v^n}^{n(k_v^n-1)}\setminus\underline D_{\tau_v+2\Delta'}$. In other words, $v'$ is a child of $v$ in $\cT(G_n,v)$ and that $\underline\tau_{v'}>\tau_v+2\Delta'$. As $\tau_{v'}-\Delta'<\underline\tau_{v'}^n\leq\underline\tau_v^n<\tau_v+\Delta'$, we obtain the desired property that $\underline\tau_{v'}>\tau_{v'}$.

As a result, we can iteratively extract a sequence $(v_m)_{m\ge0}$ with the property that $\underline\tau_{v_m}>\tau_{v_m}$ and that each $v_{m+1}$ is a child of $v_m$ in $\cT(G_n,v_m)\subset\cT(G_n,v_0)$. This is a contradiction as $\cT(G_n,v_0)\subset V_0$, the latter of which being a finite subset of $V$. Therefore, we obtain $D_t=\underline D_t$, for any $t\ge0$ and thus for $v\in V$,
\begin{equation*}\begin{aligned}
X_v(t)&=x_v+Z_v(t)-\sum_{u\in N_G^-(v)}c_{uv}\bone_{\{u\in D_t\}}=x_v+Z_v(t)-\sum_{u\in N_G^-(v)}c_{uv}\bone_{\{u\in\underline D_t\}}=\underline X_v(t).
\end{aligned}\end{equation*}
\textbf{Step 4.} It follows from Step 1-3 that the sequence $(\cL(G_n,\underline X^n,\underline D^n))_{n\ge1}$ is tight and any of its limit points identifies with $\cL(G,\underline X,\underline D)$, which is unique by Corollary \ref{cor:uniqueness}. Therefore, the whole sequence $\cL(G_n,\underline X^n,\underline D^n)$ converges to $\cL(G,\underline X,\underline D)$ in $\cP(\cG_*[\cD^2])$ as $n\to\infty$.
\end{proof}
\begin{remark}
Following Remark \ref{rem:pathwise1}, the pathwise solution map $\varphi$ induces a map $\Phi$ acting on probability distributions such that $\cL(G,X,D)=\Phi(\cL(G,c,x,Z))$, which is proved to be continuous. As a result, we have actually obtained the continuity of $\varphi$ on the set of configurations which drive $\delta$-robust systems for some $\delta>0$.   
\end{remark}

\subsection{Convergence of empirical measures}\label{sec:proof of main3}
Recall that for a finite graph $G$, its empirical distribution of the paths and default times associated with the physical solution is defined as
\begin{equation*}\begin{aligned}
\mu:=\frac{1}{|V|}\sum_{v\in V}\delta_{(X_v,\tau_v)}. 
\end{aligned}\end{equation*}

\begin{proof}[Proof of Theorem \ref{thm-main-3}]
By assumption, we have
\begin{equation*}\begin{aligned}
\frac{1}{|G_n|} \sum_{v \in G_n} \delta_{\mathcal{C}_v(G_n,c^n,x^n,Z^n)} \to \mathcal{L}(G,c,x,Z) \quad \text{in probability},    
\end{aligned}\end{equation*}
where $\mathcal{C}_v(G_n,c^n,x^n,Z^n)$ denotes the connected component of the marked graph $(G_n,c^n,x^n,Z^n)$ rooted at $v$. Let $\Phi$ be the map that assigns the law of its physical solution to the law of input configuration, which is well-defined and continuous by Theorem~\ref{thm-main-2} at distribution measures of configurations that drive $\delta$-robust systems and have a driving noise component satisfying the downward crossing property. Then,
\begin{equation*}\begin{aligned}
\lim_{n\to\infty}\frac{1}{|G_n|}\sum_{v\in G_n}\delta_{C_v(G_n, X^n,D^n)}&=\lim_{n\to\infty}\Phi\Bigg(\frac{1}{|G_n|}\sum_{v\in G_n}\delta_{C_v(G_n,c^n,x^n,Z^n)}\Bigg)\\
&=\Phi\left(\cL(G,c,x,Z))\right)\\
&=\cL(G,X,D)\quad\text{in probability}.   
\end{aligned}\end{equation*}
The second claim follows from the above combined with Remark \ref{rem:law_root} and Corollary \ref{cor:Conv_JumpTimes}.
\end{proof}

\subsubsection{Systems on regular trees}
As the general theory has been proved, we move on to study some simple examples to illustrate how the general result can be applied. We start with the example $G=(V,E)$ where $V=\ZZ$, $E=\{(i,i+1):\,i\in\ZZ\}$ and $(c_{i,i+1})_{i\in\ZZ}$ are non-negative random variables. The graph $G$ models an infinitely long chain of banks with uni-directional exposure. We assume $(c_{i,i+1},x_i,Z_i)_{i\in\ZZ}$ are i.i.d. across $i\in\ZZ$ and that $(G,c,x,Z)$ drives $\delta$-robust systems for some $\delta>0$ (the latter condition is not restrictive, as the percolation threshold for $G$ is $1$). It follows from Theorem \ref{thm-main-3} that
\begin{equation*}\begin{aligned}
\lim_{n\to\infty}\frac{1}{2n+1}\sum_{i=-n}^n\delta_{\tau_i^n}=\cL(\tau_0)\quad\text{in probability},    
\end{aligned}\end{equation*}
where $\tau_i^n$ is the default time of the $i$-th bank in the system described by graph $G|_{[-n,n]}$, and $\tau_0$ is the default time of the root bank in the system described by graph $G$. We aim to provide a tractable characterization of the distribution of $\tau_0$. Note that
\begin{equation*}\begin{aligned}
X_0(t)=x_0+Z_0(t)-c_{-1,0}\bone_{\{\tau_{-1}\leq t\}},    
\end{aligned}\end{equation*}
$\tau_{-1}$ is independent of $(c_{-1,0},x_0,Z_0)$, and that $\cL(\tau_{-1})=\cL(\tau_0)$, we see that the cumulative distribution function $F_{\tau_0}$ of $\tau_0$ is a fixed point of the map $\Psi:\cP([0,\infty])\to\cP([0,\infty])$ defined as (here we identify a probability measure on $[0,\infty]$ with its CDF)
\begin{equation}\label{eq:tau_dist_iteration1}
\begin{aligned}
\Psi[F]_t&:=\PP\Big[\inf_{s\in[0,t]}\left(x_0+Z_0(s)-c_{-1,0}\bone_{\{\tau_{-1}\leq s\}}\right)\leq0\Big]\quad\tau_{-1}\sim F,\quad\tau_{-1}\indep(c_{-1,0},x_0,Z_0)\\
&=\int_{[0,\infty]}\PP\Big[\inf_{s\in[0,t]}\left(x_0+Z_0(s)-c_{-1,0}\bone_{[r,\infty)}(s)\right)\leq0\Big]\,\mathrm{d}F(r),\quad t\in[0,\infty].
\end{aligned}
\end{equation}
\begin{proposition}\label{prop:DefaultTimeDist0}
$F_{\tau_0}$ equals the minimal fixed point of $\Psi$ restricted to the set of cumulative distribution functions of probability measures on $[0,\infty]$.   
\end{proposition}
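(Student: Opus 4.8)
The plan is to realize $F_{\tau_0}$ as the limit of the Kleene iteration of $\Psi$ started from the bottom element of a natural order, and then deduce minimality by a monotonicity argument.

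\textbf{Step 1 (order and monotonicity of $\Psi$).} First I would equip the set of CDFs of probability measures on $[0,\infty]$ with the pointwise partial order, $F\preceq F'$ iff $F_t\le F'_t$ for all $t$; its bottom element is the CDF of $\delta_\infty$ (point mass at $\infty$), which vanishes identically on $[0,\infty)$. The key structural fact is that $\Psi$ is order-preserving. Indeed, in \eqref{eq:tau_dist_iteration1} the integrand
\[
g_t(r):=\PP\Big[\inf_{s\in[0,t]}\big(x_0+B_0(s)-c_{-1,0}\bone_{[r,\infty)}(s)\big)\leq0\Big]
\]
is non-increasing in $r$ (pathwise, delaying the neighbor's shock keeps $X_0$ no lower at every time, so the hitting event can only shrink), and $F\mapsto\int_{[0,\infty]}g_t\,dF$ is order-preserving on CDFs for any fixed non-increasing $g_t$. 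Hence $F\preceq F'\Rightarrow\Psi[F]\preceq\Psi[F']$, so the iterates $F^{(N)}:=\Psi^{(N)}[\delta_\infty]$ form an increasing chain $\delta_\infty=F^{(0)}\preceq F^{(1)}\preceq\cdots$, and I set $F^{*}:=\sup_N F^{(N)}$ (pointwise).

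\textbf{Step 2 (identifying the iterates with truncated cascades).} I would connect $F^{(N)}$ to the cascade construction of the minimal solution in Proposition~\ref{prop:existence}. With $\Gamma^{(N)}[\emptyset]$ the $N$-th cascade iterate, set $\hat\tau_v^{(N)}:=\inf\{t\ge0:v\in\Gamma^{(N)}[\emptyset]_t\}$. The claim is that $\cL(\hat\tau_0^{(N)})$ has CDF $F^{(N)}$, proved by induction on $N$. The base case is immediate since $\Gamma^{(0)}[\emptyset]=\emptyset$ forces $\hat\tau_0^{(0)}=+\infty$, i.e.\ $\cL(\hat\tau_0^{(0)})=\delta_\infty$. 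For the inductive step I use that on $\ZZ$ the unique in-neighbor of $0$ is $-1$, so $0\in\Gamma^{(N+1)}[\emptyset]_t$ iff $\inf_{s\in[0,t]}(x_0+B_0(s)-c_{-1,0}\bone_{\{\hat\tau_{-1}^{(N)}\le s\}})\le0$; by the i.i.d.\ translation-invariant structure of $(c_{i,i+1},x_i,B_i)_{i\in\ZZ}$, $\hat\tau_{-1}^{(N)}$ has the same law as $\hat\tau_0^{(N)}$, and it is a measurable function of $(c_{i,i+1})_{i\le-2}$, $(x_i)_{i\le-1}$, $(B_i)_{i\le-1}$, hence independent of $(c_{-1,0},x_0,B_0)$; plugging the inductive hypothesis into \eqref{eq:tau_dist_iteration1} shows the CDF of $\hat\tau_0^{(N+1)}$ equals $\Psi[F^{(N)}]=F^{(N+1)}$.

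\textbf{Step 3 (limit and minimality).} Since $\Gamma^{(N)}[\emptyset]_t\uparrow\underline D_t$ (Proposition~\ref{prop:existence}), the events $\{\hat\tau_0^{(N)}\le t\}=\{0\in\Gamma^{(N)}[\emptyset]_t\}$ increase to $\{0\in\underline D_t\}=\{\tau_0\le t\}$, so $F_{\tau_0}(t)=\lim_N F^{(N)}(t)=F^{*}(t)$ for every $t$, i.e.\ $F_{\tau_0}=F^{*}=\sup_N\Psi^{(N)}[\delta_\infty]$. Now, for any fixed point $F$ of $\Psi$ in the class of CDFs on $[0,\infty]$ we have $\delta_\infty\preceq F$, so by Step 1, $F^{(N)}=\Psi^{(N)}[\delta_\infty]\preceq\Psi^{(N)}[F]=F$ for all $N$; letting $N\to\infty$ gives $F_{\tau_0}=F^{*}\preceq F$. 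As $F_{\tau_0}$ is itself a fixed point (established in the discussion leading to \eqref{eq:tau_dist_iteration1}), it is the minimal one. The main obstacle is Step 2: verifying that the depth-$N$ cascade of the root involves only the $N$ banks $-1,\dots,-N$, that its default time carries exactly the independence/product structure of one application of $\Psi$, and the routine-but-fiddly identity $\{0\in\Gamma^{(N)}[\emptyset]_t\}=\{\hat\tau_0^{(N)}\le t\}$ for hitting times of right-continuous processes with downward jumps. Steps 1 and 3 are then straightforward.
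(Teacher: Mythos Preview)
Your argument is correct, but it follows a genuinely different route from the paper's. The paper also starts from the monotone iteration $F^{n+1}=\Psi[F^n]$ with $F^0=\delta_\infty$, but instead of identifying $F^{N}$ with the law of the depth-$N$ cascade default time $\hat\tau_0^{(N)}$ on the \emph{infinite} graph, it identifies $F^{n+1}$ with the law of $\tau_0^n$, the default time of the root in the spatially \emph{truncated} system $G_n=G|_{[-n,n]}$. The identification is immediate there because the leftmost vertex $-n$ has no in-neighbor (so $F_{\tau_{-n}^n}=\Psi[F^0]$) and one propagates rightwards via $F_{\tau_{i+1}^n}=\Psi[F_{\tau_i}^n]$ using independence along the chain; the limit $F_{\tau_0^n}\to F_{\tau_0}$ is then read off from the convergence Theorem~\ref{thm:ConvMinSol} together with Corollary~\ref{cor:Conv_JumpTimes}. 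In other words, the paper truncates in space and invokes its convergence machinery, whereas you truncate in cascade depth and invoke only Proposition~\ref{prop:existence}. Your approach is more elementary and self-contained (it does not require the $M_1$-topology convergence results), while the paper's approach has the advantage of illustrating Theorem~\ref{thm:ConvMinSol} in action and of making the independence structure completely transparent (no need for the translation-invariance and measurability check in your Step~2). The minimality step is the same in both proofs.
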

The proof will be given in Appendix \ref{sec:app-proofs}.

The same argument can also be applied to the analysis of financial networks modeled by $\mathbb{T}_{k+1}$, the infinite $k+1$-regular tree, in which each vertex (bank) has $1$ out-neighbor (creditor) and $k$ in-neighbors (debtors) and $(c_v,x_v,Z_v)_{v\in\mathbb{T}_{k+1}}$ are i.i.d. across $v\in\mathbb{T}_{k+1}$. We give the following characterization of the distribution of the default time $\tau_0$ where $0$ denotes the root of $\mathbb{T}_{k+1}$. The proof, which we omit for brevity, can be done by truncating the tree at depths $n\to\infty$ and conducting a bottom-up induction on the truncated trees.
\begin{proposition}\label{prop:DefaultTimeDist1}
$F_{\tau_0}$ equals the minimal fixed point of $\Psi_{k}$ restricted to the set of cumulative distribution functions of probability measures on $[0,\infty]$, where     
\begin{equation*}\begin{aligned}
\Psi_{k}[F]_t&:=\PP\Big[\inf_{s\in[0,t]}\Big(x_0+Z_0(s)-\sum_{i=1}^{k}c_{i,0}\bone_{\{\tau_i\leq s\}}\Big)\leq0\Big]\\&\quad\quad(\tau_i)_{1\leq i\leq k}\stackrel{\mathrm{i.i.d.}}{\sim} F,\quad\quad(\tau_i)_{1\leq i\leq k}\indep((c_{i,0})_{1\leq i\leq k},x_0,Z_0)\notag\\
&=\int_{[0,\infty]^{k}}\PP\Big[\inf_{s\in[0,t]}\Big(x_0+Z_0(s)-\sum_{i=1}^{k}c_{i,0}\bone_{[r_i,\infty)}(s)\Big)\leq0\Big]\,\mathrm{d}F_1(r_1)\cdots \mathrm{d}F_{k}(r_{k}).
\end{aligned}\end{equation*}
\end{proposition}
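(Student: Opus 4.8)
The plan is to mirror the proof of the preceding ($\mathbb{Z}$) proposition, replacing the single in-neighbour by the $k-1$ children of each vertex of the tree. First I would record the elementary properties of $\Psi_{k-1}$ on the set of CDFs of probability measures on $[0,\infty]$: it maps such CDFs to such CDFs, and it is monotone for the stochastic order, since $r\mapsto\bone_{[r,\infty)}$ is non-increasing and hence $(r_1,\dots,r_{k-1})\mapsto\PP[\inf_{s\in[0,t]}(x_0+B_0(s)-\sum_{i=1}^{k-1}c_{i,0}\bone_{[r_i,\infty)}(s))\leq0]$ is non-increasing in each $r_i$ --- exactly the argument used for $\Psi$. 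Starting from $F^0(t):=\bone_{\{t=\infty\}}$ and putting $F^{n+1}:=\Psi_{k-1}[F^n]$, the bound $F^0\leq F^1$ together with monotonicity makes $(F^n)_{n\geq0}$ non-decreasing, so it has a weak limit $\underline F$; this $\underline F$ is a fixed point of $\Psi_{k-1}$ (by the same dominated-convergence continuity argument as in the $\mathbb{Z}$ case) and the minimal one, since every fixed point $F^\ast$ dominates $F^0$ and hence all $F^n$.

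Second, I would identify $(F^n)$ with default-time CDFs of truncated systems. Let $G_n$ be the depth-$n$ subtree of $\mathbb{T}_k$ rooted at $0$, carrying the restriction $(c^n,x^n,B^n)$ of $(c,x,B)$; since $B_{G_n}(0,m)=B_{\mathbb{T}_k}(0,m)$ for all $m\leq n$, the configurations $(G_n,c^n,x^n,B^n)$ converge to $(G,c,x,B)$ locally in $\cG_*[\RR\times\cC]$. Each finite $G_n$ trivially drives a robust system and $(\mathbb{T}_k,c,x,B)$ drives a $\delta$-robust system by hypothesis, so Theorem~\ref{thm:ConvMinSol} and Corollary~\ref{cor:Conv_JumpTimes} give $F_{\tau_0^n}\to F_{\tau_0}$ weakly. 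It remains to show $F_{\tau_0^n}=F^{n+1}$, which I would prove by a bottom-up induction on depth in $G_n$: a leaf $v$ of $G_n$ (a vertex at distance $n$ from $0$) has no in-neighbour, so $X_v^n(t)=x_v+B_v(t)$ and $F_{\tau_v^n}=\Psi_{k-1}[F^0]=F^1$; a vertex $v$ at distance $n-j$ from $0$ has exactly its $k-1$ children $u$ as in-neighbours, each with $F_{\tau_u^n}=F^j$ by the inductive hypothesis, and from $X_v^n(t)=x_v+B_v(t)-\sum_u c_{uv}\bone_{\{\tau_u^n\leq t\}}$ one reads off $F_{\tau_v^n}=\Psi_{k-1}[F^j]=F^{j+1}$; at the root ($j=n$) this gives $F_{\tau_0^n}=F^{n+1}$. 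Passing to the limit, $F_{\tau_0}=\lim_n F^{n+1}=\underline F$, the minimal fixed point of $\Psi_{k-1}$.

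The step that needs the most care is the inductive identity $F_{\tau_v^n}=\Psi_{k-1}[F_{\tau_u^n}]$, i.e.\ that the $k-1$ children's default times are i.i.d.\ and jointly independent of $x_v$, $B_v$ and the weights on the edges into $v$. I would deduce this from three facts: (i) by the pathwise construction of the minimal solution (Remark~\ref{rem:pathwise1}), $\tau_u^n$ is a measurable function of the data attached to the subtree of $G_n$ rooted at $u$; (ii) since $G_n$ is a tree, the subtrees rooted at distinct children of $v$, and $v$ itself, are vertex-disjoint and hence carry disjoint --- therefore independent --- sub-families of the i.i.d.\ collection $(c_e,x_w,B_w)$; and (iii) each such subtree is isomorphic, as a marked rooted graph, to a shallower truncated tree already handled by the induction, which pins the children's common default law to the inductive value $F^j$. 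The only genuine novelty relative to the $\mathbb{Z}$ case is that $\delta$-robustness of $(\mathbb{T}_k,c,x,B)$ is a real hypothesis here rather than automatic (the critical percolation parameter of $\mathbb{T}_k$ equals $1/(k-1)<1$), but it is part of the standing assumptions; every other ingredient is as before, and the self-similarity of $\mathbb{T}_k$ provides the independent consistency check that $F_{\tau_0}$ is itself a fixed point of $\Psi_{k-1}$.
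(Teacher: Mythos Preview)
Your proposal is correct and is precisely the ``similar argument'' the paper alludes to but does not spell out: the iteration $F^{n+1}=\Psi_{k-1}[F^n]$ from $F^0=\bone_{\{t=\infty\}}$, the identification of $F^{n+1}$ with $F_{\tau_0^n}$ via a bottom-up induction on the depth-$n$ subtree, and the passage to the limit through Theorem~\ref{thm:ConvMinSol} and Corollary~\ref{cor:Conv_JumpTimes} mirror the $\mathbb{Z}$ proof exactly. Your additional care in justifying the independence and identical distribution of the children's default times---via the pathwise solution map of Remark~\ref{rem:pathwise1} and the vertex-disjointness of sibling subtrees---is the one genuinely new ingredient over the $\mathbb{Z}$ case and is handled correctly.
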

A key observation for such models on trees (connected and acyclic graphs) is that, for a fixed vertex, there is mutual independence between the driving noise of that vertex and the default times of its in-neighbors. In general, such independence does not hold anymore if the underlying graph is cyclic. In the latter case, the analysis of the default time distribution may require adapting the theory of \emph{local equations} developed in \cite{lacker2023localeq} and will be left for future work.

\subsubsection{A preliminary numerical experiment}
We remark that Propositions \ref{prop:DefaultTimeDist0} and \ref{prop:DefaultTimeDist1} provide a straightforward method to numerically approximate the distribution of the default time of a representative bank via iterating the map $\Psi_k$ and evaluating the probabilities with Monte-Carlo simulations. We plot the cumulative distribution functions of the default times corresponding to $k\in\{1,2,5,10\}$ as well as several sample paths of the process $X(t)$. In the experiment, we take $x_0$ to be distributed as the absolute value of $\mathcal{N}(10,4^2)$, $Z_0$ the standard Brownian motion, and $c=\frac{10}{k}$ for a fair comparison (so that the total exposure is kept constant). The number of iteration is taken to be $1000$, the total time horizon is $50$, the number of time steps is $1000$, and the number of Monte-Carlo samples is $5000$. The Python code for generating the above plot is attached in the supplementary materials.

\begin{figure}[t]
    \centering
    \includegraphics[width=0.45\linewidth]{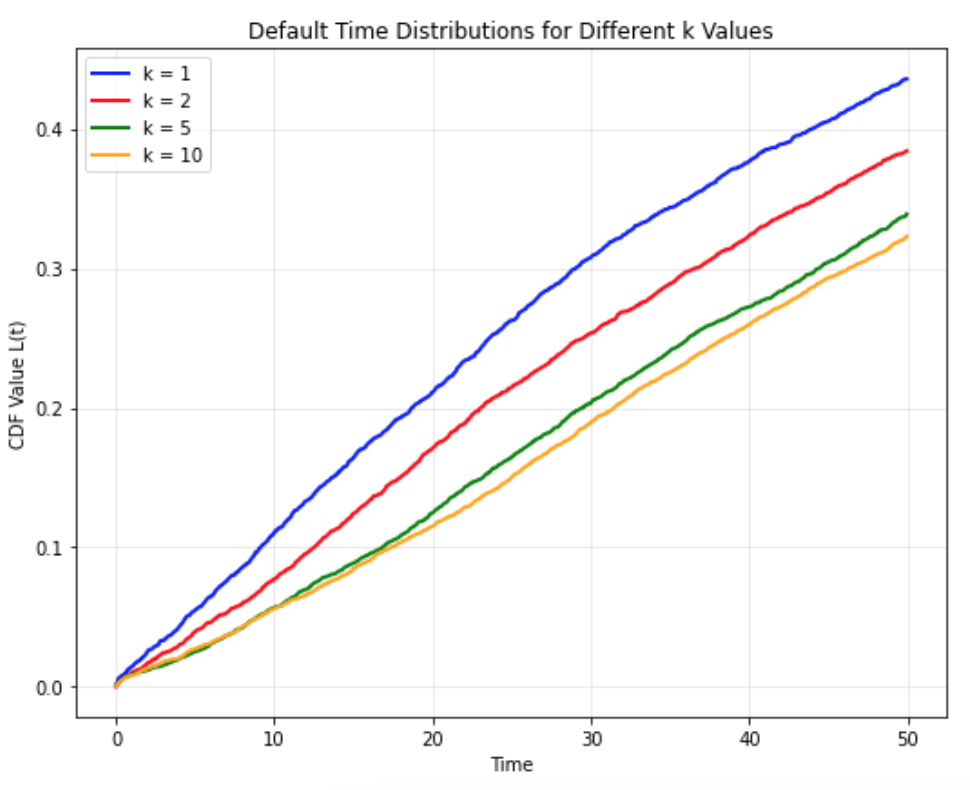}%
    \includegraphics[width=0.45\linewidth]{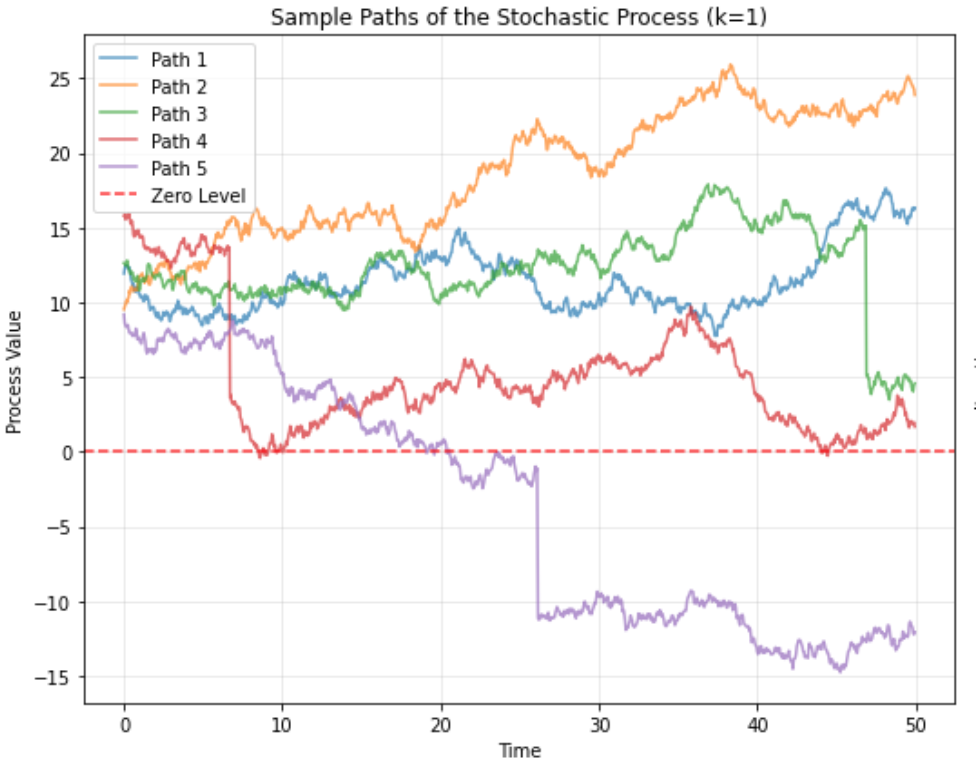}
    
    \label{fig:1}
\end{figure}

As demonstrated by the numerical results, the specific problem configuration lies in the regime in which increased connectedness lowers the fraction of defaults. A comprehensive theoretical analysis of the complex interplay between the overall health and the topological structure of financial networks remains challenging and represents an important direction for future research.



\subsection{Connections to delayed-loss models}
\label{sec:delayedmodel}
As the model studied in this paper features instantaneous propagation of losses, a natural question is how
the physical solution compares with the solution to an alternative class of models which introduce a time lag in the propagation of losses between connected entities. Many of the latter, in their general form, can be described by:
\begin{equation}\label{eq:delay1}
\begin{aligned}
X_v^\lambda(t)&=x_v+Z_v(t)-\sum_{u\in N_G^-(v)}c_{uv}\lambda_{uv}(t,X_u^\lambda),\quad v\in V\\
\tau_v&:=\tau(X_v^\lambda):=\inf\{t\ge0:\,X_v^\lambda\leq0\},
\end{aligned}
\end{equation}
where $(\lambda_{uv})_{(u,v)\in E}$ is a family of (possibly random) functions that satisfy
\begin{equation*}\begin{aligned}
&t\mapsto\lambda_{uv}(t,x)\in\cD\text{ for any fixed $x\in\cD$,}\\
&t\mapsto\lambda_{uv}(t,x)\text{ is non-decreasing,}\\
&\lambda_{uv}(t)=0\quad\forall t\in(-\infty,\tau(x)),\quad\lambda_{uv}(t)\leq1\quad\forall t\in[\tau(x),\infty)\\
&\quad\quad\quad\text{ where }\tau(x):=\inf\{t\ge0:x(t)\leq0\},\\
&\text{system \eqref{eq:delay1} admits a unique solution.}
\end{aligned}\end{equation*}
Examples of such models include
\begin{enumerate}
    \item Interaction through elastic stopping times in \cite{Hambly2022Elastic}:
    \begin{equation*}\begin{aligned}
    \lambda_{uv}(t,x):=\bone_{\{\inf_{s\in[0,t]}x(s)\leq-\xi_u\}},    
    \end{aligned}\end{equation*}
    where $(\xi_v)_{v\in V}$ are i.i.d. exponential random variables with rate parameter $\kappa>0$. $\lambda_{uv}(\cdot,x)$ converges in distribution to $\bone_{[\tau(x),\infty)}(\cdot)$ as $\kappa\to\infty$.
    \item Regularized impact models in \cite{hambly_spde_2019,BURZONI2023206,Inglis2015Neuron}:
    \begin{equation*}\begin{aligned}
    \lambda_{uv}(t,x)=\int_0^{(t-\tau)_+}k_\varepsilon(s)\,\mathrm{d}s,    
    \end{aligned}\end{equation*}
    where $k_\varepsilon(\cdot)=\frac{1}{\varepsilon}k(\frac{\cdot}{\varepsilon})$ and $k$ is a non-negative function compactly supported in $[0,\infty)$ with the property that $\int_0^\infty k(s)\,\mathrm{d}s=1$. $\lambda_{uv}(\cdot,x)$ converges in distribution to $\bone_{[\tau(x),\infty)}(\cdot)$ as $\varepsilon\downarrow0$.
    \item Default intensity models in \cite{Spiliopoulos02112019}:
    \begin{equation*}\begin{aligned}
    \lambda_{uv}(t,x)=\bone_{\{\int_0^{(t-\tau)_+}r_{uv}(s)\ge \xi_{uv}\}}(t),    
    \end{aligned}\end{equation*}
    where $r_{uv}$ is some stochastic intensity process and $(\xi_{uv})_{(u,v)\in E}$ are i.i.d. exponential random variables.
\end{enumerate}
\begin{theorem}\label{thm:ConvDelay}
    Let $(\lambda_{uv}^n)_{(u,v)\in E}$ be a sequence of random functions such that, for every $(u,v)\in E$, the processes $\lambda_{uv}^n(\cdot,x)$ converges locally uniformly, around paths satisfying the downward crossing property, almost surely to $\bone_{[\tau(x),\infty)}(\cdot)$ in $\cD$ as $n\to\infty$, in the following sense:
\begin{equation}\label{eq:lambda_conv1}
\lim_{n\to\infty} d_{M_1}\big(\lambda_{uv}^n(\cdot,x^n),\bone_{[\tau(x^n),\infty)}(\cdot)\big)=0
\qquad \text{a.s.},
\end{equation}
where
\begin{equation*}
x^n(t)=Z(t)-l^n(t),
\end{equation*}
and $Z$ is a fixed process satisfying the downward crossing property in Assumption~\ref{ass:Main}, and $(l^n)_{n\ge1}$ is any fixed sequence of uniformly bounded, nondecreasing stochastic paths. Then $(G,X^{\lambda^n},D^n)$ converges almost surely to $(G,X,D)$ in $\cG_*[\cD^2]$ as $n\to\infty$. 
\end{theorem}
The proof will be given in Appendix \ref{sec:app-proofs}.

\section{Concluding Discussion}
Our analysis of fragility and robustness in default cascade models on infinite graphs provides insights into systemic risk in financial networks. In particular, the results highlight how network topology and local interactions fundamentally shape the resilience of banking systems to shocks. Highly interconnected institutions can serve as amplifiers of contagion, triggering widespread default cascades. This underscores the importance of incorporating network-based assessments into regulatory stress-testing frameworks, moving beyond institution-level risk metrics to system-wide vulnerability analyses.

The model also reveals that even minimal initial shocks can propagate extensively when structural fragility is present. This finding supports the case for targeted preventive measures—such as capital or liquidity buffers—for institutions that play a critical role in the network. More broadly, the framework offers a rigorous foundation for identifying systemic vulnerabilities and guiding preemptive strategies aimed at minimizing the probability and severity of financial crises.

\subsection{Extensions to models with major banks}\label{sec:MajorBanks}
Admittedly, real financial networks often contain a small number of major institutions—such as central banks or globally systemic financial institutions—that interact with a large fraction of market participants. Such entities fall outside the present framework of local interactions on locally finite graphs, since they effectively introduce vertices of infinite degree and thereby distort the underlying graph distance, and hence the geometry, of the network. Nevertheless, the framework can be adapted by separating the global influence of these major institutions from the remaining local interactions among smaller banks. 

One possible way to model such global effects is through the common noise component $Z^0$ as introduced in item 3 of Lemma~\ref{lem:robust1}. However, this approach has the drawback that it does not capture feedback from the system: in particular, it cannot account for how defaults of minor banks affect the financial health of major institutions.

Below we illustrate another more systematic way to incorporate this separation within our model. Minor banks continue to interact among themselves as in the previous formulation, but each minor bank also interacts simultaneously with all major banks. Thus, from the perspective of the minor banks, the major banks act as additional global (weak) counterparties. When the number of minor banks tends to infinity, the effect of the minor-bank population on any fixed major bank becomes diminishing (and potentially mean-field like), and therefore the major banks’ interaction structure could differ from that of the minor banks, depending on the specific modeling assumptions.

Let $[N]:=\{1,\ldots,N\}$ denote the set of the major banks. Enlarge the original locally finite graph $G$ by $\tilde G:=(\tilde V, \tilde E)$ where $\tilde V:=V\cup [N]$ and 
\begin{equation*}
\tilde E:=E\cup\{(i,v),(v,i):v\in V,i\in[N]\}\cup\{(i,j)\in[N]^2:i\neq j\}.
\end{equation*}
The corresponding interacting particle system on $\tilde G$ is
\begin{equation}\label{eq:Major-Minor}
    \begin{aligned}
        X_v(t)&=x_v+Z_v(t)-\sum_{u\in N_G^-(v)}c_{uv}\bone_{\{u\in D_t\}}-\sum_{i=1}^N c_{iv}\bone_{\{i\in D_t\}},\quad v\in V\\
    X_i(t)&=x_i+Z_i(t)-\sum_{v\in V} c_{vi} \bone_{\{v\in D_t\}}-\sum_{j=1}^N c_{ji}\bone_{\{j\in D_t\}},\quad i\in [N] \\
D_t&=\{v\in \tilde V:\,\inf_{s\in[0,t]}X_v(s)\leq0\},  
    \end{aligned}
\end{equation}
where it is necessary to impose summability conditions 
\begin{equation*}
\max_{i\in [N]}\, \sum_{v\in V} \,(c_{vi}+c_{iv})<\infty.
\end{equation*}
With a suitable refinement of the arguments developed in this chapter (in particular, the percolation argument should still be performed on the graph $G$ rather than $\tilde G$), it should be possible to extend the main results to the system \eqref{eq:Major-Minor}. We leave this extension for future work.

\subsection{Other future directions}
We comment on several promising directions for future research that emerge from this work. A first avenue involves enriching the model with detailed balance sheet structures, moving beyond a scalar health variable to explicitly account for asset-liability compositions, claim seniority, collateralization, and regulatory capital buffers. This would allow for more granular estimation of creditor losses and more realistic modeling of contagion mechanisms.

Second, introducing dynamic network features—such as institutional entry and exit, multiple defaults per node, and adaptive exposure reallocation—would better capture the evolving nature of financial systems. Incorporating regulatory constraints, market frictions, and mechanisms for resolution and re-entry would further enhance the framework’s ability to reflect real-world institutional behavior.

Third, embedding strategic interaction into network formation would allow institutions to optimize exposures under risk-return trade-offs, subject to systemic constraints. A game-theoretic extension could capture how such strategic decisions interact and how macro-prudential tools influence network architecture and collective outcomes.

Finally, empirical calibration using data on interbank exposures and historical default events is essential for improving the model’s practical relevance. Estimating unobserved network structures, calibrating parameters to match crisis dynamics, and validating the model against real contagion episodes would enhance its usefulness for central banks and financial regulators engaged in stress testing and systemic risk monitoring.

\appendix
\section{The space $\cD:=D([-1,\infty))$ and the $M_1$-topology}\label{sec:cD}

\subsection{The space $D([a,b])$}
For $a, b \in \mathbb{R}$ with $a < b$, we denote by $D([a,b])$ the space of functions $f : [a,b] \to \mathbb{R}$ that are right-continuous at every $t \in [a,b)$, possess left limits at every $t \in (a,b]$, and satisfy $f(b-) = f(b)$. 

We now introduce a metric that induces the $M_1$ topology on $D([a,b])$, following the presentation in \cite{delarue2015global}. For a function $f\in D([a,b])$, let $\cG_f$ denote the completed graph of $f$:
\begin{equation*}\begin{aligned}
\cG_f:=\{t\in[a,b]:x\in[f(t-),f(t)]\},    
\end{aligned}\end{equation*}
where $[f(t-),f(t)]$ is the non-ordered closed segment between $f(t-)$ and $f(t)$, and we manually set $f(a-):=f(a)$. An order on $\cG_f$ can be defined as follows: for $(t_1, x_1), (t_2, x_2) \in \mathcal{G}_f$, we write $(t_1, x_1) \leq (t_2, x_2)$ if either $t_1 < t_2$, or $t_1 = t_2$ and $|f(t_1-) - x_1| \leq |f(t_1-) - x_2|$.

 A \emph{parametric representation} of $\cG_f$ is a continuous map $(r,u):[a,b]\to\cG_f$ that is surjective and non-decreasing with respect to the above order. Let $\cR_f$ denote the set of all such parametric representations of $\cG_f$. For $f_1,f_2\in D([a,b])$, the $M_1$ distance between them is defined as
\begin{equation*}\begin{aligned}
d_{M_1}(f_1,f_2):=\inf_{(r_i,u_i)\in\cR_{f_i},i=1,2}(\|r_1-r_2\|_\infty\vee\|u_1-u_2\|_\infty).  
\end{aligned}\end{equation*}
The space $D([a,b])$, equipped with the $M_1$ topology, is a Polish space. Moreover, its Borel $\sigma$-field coincides with the $\sigma$-field generated by the evaluation mappings $(f\mapsto f(t))_{t\in[a,b]}$.

\subsection{The space $\cD:=D([-1,\infty))$}
We denote by $D([-1,\infty))$ the space of functions $f:[-1,\infty)\to\RR$ that are right-continuous at all $t\in[-1,\infty)$ and have left limits at all $t\in(-1,\infty)$. For $f\in D([-1,\infty))$ and any $t>0$, we denote by $f|_{[-1,t-]}$ the restriction of $f$ to $[-1,t]$ with the value at $t$ replaced by $f(t-)$, so that $f|_{[-1,t-]}\in D([-1,t])$. The $M_1$ distance on it can be defined by
\begin{equation*}\begin{aligned}
d_{M_1}(f_1,f_2):=\int_0^\infty e^{-t}d_{M_1}(f_1|_{[-1,t-],},f_2|_{[-1,t-]})\,\mathrm{d}t. 
\end{aligned}\end{equation*}
In other words, a sequence $(f_n) \subset D([-1, \infty))$ converges to $f$ in the $M_1$ topology if and only if there exists a sequence $(t_m)_{m\ge0}\uparrow\infty$ (possibly depending on $f$) such that $f_n|_{[-1,t_m-]}\to f|_{[-1,t_m-]}$ in $D([-1,t_m])$ as $n\to\infty$, for each $m\ge0$. Again, $D([-1,\infty))$ endowed with the $M_1$ topology is a Polish space, and its Borel $\sigma$-field coincides with the $\sigma$-field generated by the evaluation mappings $(f\mapsto f(t))_{t\in[-1,\infty)}$.

In this paper, the set-valued process $[-1,\infty)\ni t\mapsto D_t$ such that $D_t\subset V$ for all $t$ will be identified with $(t\mapsto\bone_{\{v\in D_t\}})_{v\in V}\in\cD^V$, which can be thought of as a collection of $\cD$-marks on the vertex set $V$. This identification is justified by the following lemma, which directly follows the definition.
\begin{lemma}
The set-valued process $t\mapsto D_t$ is right continuous, if and only if $t\mapsto\bone_{\{v\in D_t\}}$ is right continuous for any $v\in V$, if and only if $(G,(t\mapsto\bone_{\{v\in D_t\}})_{v\in V})\in\cG_*[\cD]$.    
\end{lemma}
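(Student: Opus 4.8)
The plan is simply to unwind the definitions, using the one structural fact that matters here: the default processes $D$ considered in this paper are non-decreasing in $t$ (defaults accumulate), as follows from the fixed-point characterization $D=\Gamma[D]$, since $\Gamma[D]_t=\{v:\inf_{s\in[0,t]}(\cdots)\le 0\}$ is non-decreasing in $t$, and since $D_t=\emptyset$ on $[-1,0)$. First I would record the consequence that each coordinate process $t\mapsto\bone_{\{v\in D_t\}}$ is a non-decreasing $\{0,1\}$-valued function, so its left limits exist automatically at every point of $(-1,\infty)$; hence $\bone_{\{v\in D_\cdot\}}$ belongs to $\cD=D([-1,\infty))$ if and only if it is right-continuous everywhere. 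By the very definition of $\cG_*[\cY]$ with $\cY=\cD$, the tuple $(G,(\bone_{\{v\in D_\cdot\}})_{v\in V})$ lies in $\cG_*[\cD]$ precisely when $G\in\cG_*$ (which holds) and the family of marks $(\bone_{\{v\in D_\cdot\}})_{v\in V}$ lies in $\cD^V$, i.e. when each $\bone_{\{v\in D_\cdot\}}$ is right-continuous. This already identifies the second and third conditions.

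Next I would establish the first equivalence by reading right-continuity of the set-valued process $t\mapsto D_t$ as the identity $D_{t+}=D_t$ for all $t\ge 0$, where $D_{t+}:=\bigcap_{s>t}D_s$ (a legitimate limit for a non-decreasing family; on $[-1,0)$ it is trivial). Since $D$ is non-decreasing, $v\in D_{t+}$ holds iff $v\in D_s$ for every $s>t$, which is the same as $\lim_{s\downarrow t}\bone_{\{v\in D_s\}}=1$, while $v\in D_t$ holds iff $\bone_{\{v\in D_t\}}=1$. Therefore, for a fixed $t$, the equality $D_{t+}=D_t$ is equivalent to: for every $v\in V$, $\lim_{s\downarrow t}\bone_{\{v\in D_s\}}=\bone_{\{v\in D_t\}}$, i.e. $\bone_{\{v\in D_\cdot\}}$ is right-continuous at $t$. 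Quantifying over $t$, we conclude that $t\mapsto D_t$ is right-continuous iff $t\mapsto\bone_{\{v\in D_t\}}$ is right-continuous for every $v\in V$; combining with the first paragraph finishes the proof.

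Since the argument is purely definitional, there is no genuine technical obstacle; the only points requiring a moment's care are (i) observing that, thanks to monotonicity of $D$, the left limits of the indicator processes exist for free, so that right-continuity alone characterizes membership in $\cD$, and (ii) fixing the convention that right-continuity of the set-valued process means $\bigcap_{s>t}D_s=D_t$, so that it translates coordinatewise into right-continuity of the scalar processes $\bone_{\{v\in D_\cdot\}}$.
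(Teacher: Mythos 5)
Your proposal is correct and follows the same route as the paper, which simply asserts that the lemma ``directly follows the definition.'' You do add one clarifying observation the paper leaves implicit: since $D$ is non-decreasing (a consequence of the fixed-point form $D=\Gamma[D]$ together with the convention $D_t=\emptyset$ on $[-1,0)$), the indicator paths $t\mapsto\bone_{\{v\in D_t\}}$ automatically have left limits, so right-continuity alone characterizes membership in $\cD$ --- and this monotonicity is genuinely needed for the equivalence with the third condition, as a right-continuous but non-monotone indicator process could oscillate and fail to lie in $\cD$.
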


\subsection{Towards compactness in $\cD$}
The next result provides a sufficient condition for compactness of subsets of $D([-1, \infty))$ consisting of monotone and uniformly bounded paths. This compactness criterion will be useful in proving tightness of empirical processes arising from our particle system. 

\begin{theorem}\label{thm:D_Compact_2}
Fix any sequence $(M_m)_{m\ge1}$ of non-decreasing positive real numbers. The following subset of $D([-1,\infty))$ has compact closure in the $M_1$ topology:
\begin{equation*}\begin{aligned}
\cM((M_m)_{m\ge1}):=\{&f\in D([-1,\infty)):\,f=0\,\,\text{on }[-1,0),\\
& \text{$f$ is monotone and }\sup_{[-1,m]}|f|\leq M_m,\quad\forall m\ge1\}.    
\end{aligned}\end{equation*}
In particular, for any $M \in (0, \infty)$, the set
\begin{equation*}\begin{aligned}
\cM(M)&:=\{f\in D([-1,\infty)):\,f=0\,\,\text{on }[-1,0),\,\text{$f$ is monotone and }\sup_{[-1,\infty)}|f|\leq M\}.    
\end{aligned}\end{equation*}    
has compact closure in the $M_1$ topology.
\end{theorem}
\begin{proof}
We take any sequence $f_k\in\cM((M_m)_{m\ge1})$ and we need to show that $(f_k)_{k\ge1}$ has a limit point in $D([-1,\infty))$. For each $m\ge1$, we first define the auxiliary function
\begin{equation*}\begin{aligned}
f_k^m(t):=f_k(t)\bone_{[-1,m]}(t)+f_k(m)\bone_{(m,m+1]}(t),    
\end{aligned}\end{equation*}
which is monotone and remains constant on $[-1,0)$ and on $(m,m+1]$. By Theorem \cite[Theorem 12.12.2]{whitt2002stochastic}, there exists $f_\infty^m\in D([-1,m+1])$ such that $f_k^m$ converges to $f_\infty^m$ in $D([-1,m+1])$ as $k\to\infty$ along some subsequence, which is potentially a further subsequence of the subsequence corresponding to $m-1$. In particular, by \cite[Corollary 12.9.1]{whitt2002stochastic}, we can find a continuity point $t_m\in(m-1,m]$ of $f_\infty^m$ such that $f_k|_{[-1,t_m]}=f_k^m|_{[-1,t_m]}$ converges to $f_\infty^m|_{[-1,t_m]}$ in $D([-1,t_m])$ as $k\to\infty$ along that subsequence (note also that $t_m\uparrow\infty$ by construction). Combining the diagonal argument, Theorem \ref{thm:D_Conv_Pointwise} and the right-continuity of the limit function, we can find an $f_\infty\in D([-1,\infty))$ such that $f_k$ converges to $f_\infty$ in $D([-1,\infty))$ as $k\to\infty$ along some subsequence.
\end{proof}

\section{Auxiliary Technical Results}
\label{sec:app-technical}
The following lemma is a version of Continuous Mapping Theorem that will be useful for us.
\begin{lemma}\label{lem:ContMapThm}
Let $(\mu_n)_{n\ge1}\subset\cP(\cY)$ be a tight sequence of probability measures on the metric space $\cY$, and let $f:\cY\to\cY'$ be a continuous map between metric spaces. Then the sequence of push-forward measures $(f_\#\mu_n)_{n\ge1}\subset \cP(\cY')$ is also tight.
\end{lemma}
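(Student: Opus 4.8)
The plan is to reduce everything to the elementary fact that continuous images of compact sets are compact, together with the definition of the push-forward. First I would fix $\varepsilon>0$ and invoke the tightness of $(\mu_n)_{n\ge1}$ to obtain a single compact set $K\subset\cY$ with $\mu_n(\cY\setminus K)<\varepsilon$ for every $n\ge1$. Since $f:\cY\to\cY'$ is continuous, the image $f(K)\subset\cY'$ is compact, hence in particular Borel measurable. This $f(K)$ will be the compact set witnessing tightness of $(f_\#\mu_n)_{n\ge1}$.

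Next I would record the trivial inclusion $K\subset f^{-1}(f(K))$, which gives $\cY'\setminus f(K)$ the preimage bound $f^{-1}(\cY'\setminus f(K))=\cY\setminus f^{-1}(f(K))\subset\cY\setminus K$. By the definition of the push-forward measure,
\begin{align*}
(f_\#\mu_n)\big(\cY'\setminus f(K)\big)=\mu_n\big(f^{-1}(\cY'\setminus f(K))\big)\leq\mu_n(\cY\setminus K)<\varepsilon,
\end{align*}
uniformly in $n\ge1$. Since $\varepsilon>0$ was arbitrary, the sequence $(f_\#\mu_n)_{n\ge1}$ is tight in $\cP(\cY')$, which is the claim.

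There is essentially no obstacle here: the only point requiring a word of care is the measurability of $f^{-1}(f(K))$ (equivalently of $f(K)$), which is immediate because $f(K)$ is compact and therefore closed in the metric space $\cY'$; one could alternatively note that $f$ being continuous is Borel measurable, so preimages of Borel sets are Borel. I would also remark that separability or completeness of $\cY,\cY'$ plays no role in this argument, so the lemma applies verbatim in all the settings where we use it.
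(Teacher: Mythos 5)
Your proof is correct and is essentially identical to the paper's: both fix $\varepsilon>0$, extract a compact $K$ witnessing tightness of $(\mu_n)$, observe that $f(K)$ is compact by continuity, and use the inclusion $K\subset f^{-1}(f(K))$ to transfer the mass bound to $f_\#\mu_n$. Your extra remark on the Borel measurability of $f(K)$ is a harmless elaboration that the paper leaves implicit.
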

\begin{proof}
For any $\varepsilon>0$, there exists a compact subset $K_\varepsilon$ of $Y$ such that $\inf_n\mu(K_\varepsilon)\ge1-\varepsilon$. Since $f$ is continuous, $f(K_\varepsilon)$ is also compact as a subset of $\cY'$. Now, $f_\#\mu(f(K_\varepsilon))=\mu(f^{-1}f(K_\varepsilon))\ge\mu(K_\varepsilon)\ge1-\varepsilon$ for any $n$.
\end{proof}

\begin{proof}[Proof of Lemma \ref{lem:MarkedGraph_Comp}]
Let $((G_n,c^n,y'^n,y^n))_{n\ge1}\subset\cK$ be any sequences. We will show that it admits a convergent subsequence. First, as $\cK_0$ is compact, there exists $(G_\infty,c^\infty,y'^\infty)\in\cG_*$ such that $(G_n,c^n,y'^n)$ converges to $(G_\infty,c^\infty,y'^\infty)$ as $n\to\infty$ along some subsequence, which, without loss of generality, we assume to be the original sequence. For any $m\in\NN$, we can take $N_m\in\NN$ sufficiently large so that $B_{G_n}(o_n,m)=B_G(o,m)$ for all $n\ge N_m$. Now that $y_v^n\in K_m$ for any $v\in B_G(o,m)$ and any $m\in\NN$, by using the diagonal argument, we can obtain $y^\infty\in\cY^V$ such that $y_v^n$ converges to $y_v^\infty\in K_m$ for any $v\in B_G(o,m)$ and any $m\in\NN$ as $n\to\infty$ along some subsequence. In particular, $(G_n,c^n,y'^n,y^n)$ converges locally to $(G_\infty,c^\infty,y'^\infty,y^\infty)$ in $\cG_*[\cY'\times\cY]$ as $n\to\infty$ along the same subsequence.
\end{proof}

The following theorem is an easy extension of \cite[Theorem 12.4.1]{whitt2002stochastic} to $D([-1,\infty))$.

\begin{theorem}\label{thm:D_Conv_Pointwise}
Suppose that the sequence $(f_n)_{n\ge0}$ converges to $f$ in $D([-1,\infty))$ in the $M_1$ topology. Then for all points $t\in[-1,\infty)$ at which $f$ is continuous, it holds that
\begin{equation*}\begin{aligned}
\lim_{\delta\downarrow0}\limsup_{n\to\infty}\sup_{s\in[(-1)\vee(t-\delta),t+\delta]}|f_n(s)-f(s)|=0.    
\end{aligned}\end{equation*}
\end{theorem}

\begin{corollary}\label{cor:Conv_JumpTimes}
Suppose there exists a sequence $(\tau_n)_{n\ge0}\subset[0,\infty]$ such that $(f_n:t\mapsto\bone_{[\tau_n,\infty)}(t))_{n\ge0}$ converges to some $f$ in $D([-1,\infty))$. Then $\tau:=\lim_{n\to\infty}\tau_n\in[0,\infty]$ exists and $f(t)=\bone_{[\tau,\infty)}(t)$. 
\end{corollary}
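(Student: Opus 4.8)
The plan is to first extract the numerical limit $\tau := \lim_{n\to\infty}\tau_n$ in the compact space $[0,\infty]$, and then identify $f$ by combining pointwise convergence at continuity points (Theorem~\ref{thm:D_Conv_Pointwise}) with the right-continuity of càdlàg functions.

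\textbf{Existence of $\tau$.} Since $[0,\infty]$ is compact, it suffices to rule out $\liminf_n \tau_n < \limsup_n \tau_n$. Suppose, for contradiction, that there are subsequences with $\tau_{n_k}\to a$ and $\tau_{m_k}\to b$ where $0\le a<b\le\infty$. Because $f$ is càdlàg it has at most countably many discontinuities, so the nonempty open interval $(a,\min(b,a+1))$ contains a point $t$ at which $f$ is continuous; note $t$ is finite. For $k$ large we have $\tau_{n_k}<t<\tau_{m_k}$, hence $f_{n_k}(t)=\bone_{[\tau_{n_k},\infty)}(t)=1$ and $f_{m_k}(t)=0$. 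But Theorem~\ref{thm:D_Conv_Pointwise} applied at the continuity point $t$ gives $f_n(t)\to f(t)$, and passing to the two subsequences forces $1=f(t)=0$, a contradiction. Hence $\tau:=\lim_n\tau_n$ exists in $[0,\infty]$, and $\tau\ge 0$ since every $\tau_n\ge 0$.

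\textbf{Identification of $f$.} If $t$ is a continuity point of $f$ with $t<\tau$, then $\tau_n>t$ for all large $n$, so $f_n(t)=0$ and Theorem~\ref{thm:D_Conv_Pointwise} gives $f(t)=0$; if $t$ is a continuity point of $f$ with $t>\tau$, then $\tau_n<t$ for all large $n$, so $f_n(t)=1$ and $f(t)=1$. Since the continuity points of $f$ are dense in $[-1,\infty)$ and $f$ is right-continuous, I extend these identities to all of $[-1,\infty)$: for $t<\tau$, approximating $t$ from the right by continuity points chosen in $(t,\tau)$ yields $f(t)=0$; for $t\ge\tau$, approximating $t$ from the right by continuity points chosen above $\tau$ yields $f(t)=1$. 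Therefore $f=\bone_{[\tau,\infty)}$, with the convention that this is the zero function when $\tau=\infty$ (which matches the case analysis, since then every finite $t$ satisfies $t<\tau$).

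\textbf{Main obstacle.} The only real subtlety is bookkeeping: the $M_1$ topology only delivers pointwise convergence $f_n(t)\to f(t)$ at continuity points of $f$ (Theorem~\ref{thm:D_Conv_Pointwise}), so one cannot directly evaluate $f$ at $\tau$ itself; the density of continuity points together with right-continuity of elements of $\cD$ is what pins down $f$ everywhere. One should also verify the boundary cases $\tau=0$ and $\tau=\infty$, but both go through with the argument above unchanged.
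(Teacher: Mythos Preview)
Your proof is correct and follows essentially the same route as the paper's: both use Theorem~\ref{thm:D_Conv_Pointwise} at continuity points of $f$ to force $\liminf_n\tau_n=\limsup_n\tau_n$, and then pin down $f$ everywhere by density of continuity points together with right-continuity. The only cosmetic difference is that the paper first observes $f$ is $\{0,1\}$-valued and non-decreasing before running the argument, whereas you go straight to the subsequential contradiction; neither step adds or removes any real content.
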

\begin{proof}
As each $f_n$ takes value in $\{0,1\}$ and is non-decreasing, its limit $f$ must take value in $\{0,1\}$ and be non-decreasing by using the fact that the continuity points are dense and $f$ is right-continuous. For any continuity point $t$ of $f$, we know from Theorem \ref{thm:D_Conv_Pointwise} that $f(t)=\liminf_{n\to\infty}f_n(t)=0$ if $t<\limsup_{n\to\infty}\tau_n$, and $f(t)=\limsup_{n\to\infty}f_n(t)=1$ if $t>\liminf_{n\to\infty} \tau_n$. Combining the two cases, we force that $\liminf_{n\to\infty}\tau_n=\limsup_{n\to\infty}\tau_n$ and thus $\tau:=\lim_{n\to\infty}\tau_n$ exists and also that $f(t)=\bone_{[\tau,\infty)}(t)$ by the right-continuity of $f$.
\end{proof}

\begin{corollary}
Suppose $D^n$ is a sequence of non-decreasing right-continuous set-valued processes such that $(G_n,D^n)$ converges locally to $(G,F)$ in $\cG_*[\cD]$. Then there exists a non-decreasing set-valued process $D$ such that $F_v(t)=\bone_{\{v\in D_t\}}$ for any $v\in V$.    
\end{corollary}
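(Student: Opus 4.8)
The plan is to reduce the statement to a vertex-by-vertex application of Corollary~\ref{cor:Conv_JumpTimes}. First I would record that, since each $D^n$ is a non-decreasing, right-continuous set-valued process (with $D^n_t=\emptyset$ for $t\in[-1,0)$, as throughout the paper), for every vertex $v$ of $G_n$ the associated $\cD$-mark $t\mapsto\bone_{\{v\in D^n_t\}}$ is a non-decreasing, right-continuous, $\{0,1\}$-valued path, hence of the form $\bone_{[\tau_v^n,\infty)}(\cdot)$ with $\tau_v^n:=\inf\{t\ge0:\,v\in D^n_t\}\in[0,\infty]$. Thus each $D^n$ is completely encoded by its family of hitting times $(\tau_v^n)_{v\in V_n}$.

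Next I would fix an arbitrary vertex $v\in V$, set $k:=d_G(o,v)$, and invoke the definition of local convergence in $\cG_*[\cD]$: for all large $n$ one has $B_{G_n}(o_n,k)\cong B_G(o,k)$, and after relabelling $V_n$ through such an isomorphism (as permitted by the remark in Section~\ref{sec:Local_Conv}) we may assume $B_{G_n}(o_n,k)=B_G(o,k)$, so that $v\in V_n$ and, by the form of the metric $d_*$, $d_{M_1}\big(\bone_{[\tau_v^n,\infty)}(\cdot),F_v\big)\to0$ as $n\to\infty$. Corollary~\ref{cor:Conv_JumpTimes} then applies directly and yields that $\tau_v:=\lim_{n\to\infty}\tau_v^n\in[0,\infty]$ exists and that $F_v(t)=\bone_{[\tau_v,\infty)}(t)$ for every $t\in[-1,\infty)$.

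Finally I would assemble the limiting process by setting $D_t:=\{v\in V:\,\tau_v\le t\}$ for $t\in[-1,\infty)$. Then $s\le t$ trivially implies $D_s\subset D_t$, so $t\mapsto D_t$ is non-decreasing, and $\bone_{\{v\in D_t\}}=\bone_{[\tau_v,\infty)}(t)=F_v(t)$ for all $v\in V$ and $t$ by construction; right-continuity of $t\mapsto D_t$ then also follows from the right-continuity of each $F_v\in\cD$ via the preceding lemma. I do not anticipate any genuine obstacle: the only point requiring a little care is the bookkeeping in the relabelling step — checking that, for each fixed finite ball $B_G(o,k)$, the isomorphisms furnished by $\cG_*[\cD]$-convergence indeed give $\bone_{\{v\in D^n_\cdot\}}\to F_v$ in $M_1$ for each individual $v$ — while the substantive content is entirely contained in the already-established Corollary~\ref{cor:Conv_JumpTimes}.
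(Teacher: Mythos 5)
Your proof is correct and matches what the paper evidently intends but leaves unwritten: the paper places this corollary immediately after the lemma identifying set-valued processes with $\cD^V$-valued marks and after Corollary~\ref{cor:Conv_JumpTimes}, so the reduction to a vertex-by-vertex application of that corollary (after relabelling via the $\cG_*[\cD]$-isomorphisms of local convergence) is precisely the intended argument. The relabelling bookkeeping you flag at the end is indeed the only mildly delicate point, and your handling of it — fixing $k=d_G(o,v)$, taking $n$ large enough that $B_{G_n}(o_n,k)\cong B_G(o,k)$ with marks $\varepsilon$-close, and diagonalizing over $\varepsilon\downarrow0$ to get $d_{M_1}(\bone_{[\tau^n_v,\infty)},F_v)\to0$ — is sound.
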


The following two Lemmas characterize the ``uniform convergence" entailed by convergence in the $M_1$ topology.
\begin{lemma}\label{lem:D_Conv_Pointwise2}
Suppose that the sequence $(f_n)_{n\ge0}$ converges to $f$ in $D([-1,\infty))$ in the $M_1$ topology, and $t_n\to t\ge0$. Then
\begin{equation*}
f(t-)\wedge f(t)\leq\liminf_{n\to\infty}f_n(t_n-)\wedge f_n(t_n)\leq \limsup_{n\to\infty}f_n(t_n-)\vee f_n(t_n)\leq f(t-)\vee f(t).    
\end{equation*}
\end{lemma}
\begin{proof}
For any $\delta>0$, we can take $\xi_1\in(t-\delta,t)$ and $\xi_2\in(t,t+\delta)$ that are continuity points of $f$. Noting the definition of $w_s(f,t,\delta)$ as in \cite[Equation (12.4.4)]{whitt2002stochastic}, we get
\begin{equation*}
f_n(\xi_1)\wedge f_n(\xi_2)-w_s(f_n,t,\delta)\leq f_n(t_n-), f_n(t_n)\leq f_n(\xi_1)\vee f_n(\xi_2)+w_s(f_n,t,\delta). 
\end{equation*}
Taking $n\to\infty$ and then $\delta\downarrow0$, we get the desired result by \cite[Theorem 12.5.1 (iv)]{whitt2002stochastic}.
\end{proof}
\begin{lemma}\label{lem:D_Conv_PointwiseUniform1}
Suppose that the sequence $(f_n)_{n\ge0}$ converges to $f$ in $D([-1,\infty))$ in the $M_1$ topology, and that $[a',b']\subset(a,b)\subset[-1,\infty)$. Then
\begin{equation*}
\inf_{t\in[a,b]}f(t)\leq\liminf_{n\to\infty}\inf_{t\in[a',b']}f_n(t)\leq \limsup_{n\to\infty}\sup_{t\in[a',b']}f_n(t)\leq\sup_{t\in[a,b]}f(t).     
\end{equation*}
\end{lemma}
\begin{proof}
Take $\delta>0$ sufficiently small so that $[a'-\delta,b'+\delta]\subset[a,b]$. We can find a partition $t_0<t_1<...<t_M$ consisting of continuity points of $f$ such that $t_0\in[a'-\delta,a']$, $t_M\in[b',b'+\delta]$, and $t_{i+1}-t_i\leq\delta$. For any $t\in[a',b']$, let $i$ be such that $t\in[t_i,t_{i+1}]$. Noting the definition of $w_s(f,\delta)$ as in \cite[Equation (12.5.1)]{whitt2002stochastic} (we can first restrict each $f_n$ to the compact interval $[-1,b+\delta]$, to avoid taking supremum of $w_s(f_n,t,\delta)$ over the non-compact interval $[-1,\infty)$), we get
\begin{equation*}
\min\{f_n(t_i),f_n(t_{i+1})\}-w_s(f_n,\delta)\leq f_n(t)\leq\max\{f_n(t_i),f_n(t_{i+1})\}+w_s(f_n,\delta).    
\end{equation*}
As a result,
\begin{equation*}
\min_{0\leq i\leq M}f_n(t_i)-w_s(f_n,\delta)\leq\inf_{t\in[a',b']}f_n(t)\leq\sup_{t\in[a',b']}f_n(t)\leq\max_{0\leq i\leq M}f_n(t_i)+w_s(f_n,\delta).     
\end{equation*}
Taking $n\to\infty$ and then $\delta\downarrow0$, we get the desired result by combining \cite[Theorem 12.5.1 (iv)]{whitt2002stochastic} and Theorem \ref{thm:D_Conv_Pointwise}.
\end{proof}

\section{Remaining Technical Proofs}\label{sec:app-proofs}
\begin{proof}[Proof of Proposition \ref{prop:DefaultTimeDist0}]
Let $F^0(t):= \bone_{\{t=\infty\}}$ (which is the CDF of the Dirac measure $\delta_\infty$) and define the sequence $F^{n+1} := \Psi[F^n]$. As the map $\Psi$ preserves stochastic dominance in the sense that $\Psi[F]\leq\Psi[\tilde F]$ for any two CDFs with $F\leq\tilde F$ and that $F^0\leq F^1$, we see that $(F^n)_{n\ge0}$ is a non-decreasing sequence and hence the limit
\begin{equation*}\begin{aligned}
\underline F:= \lim_{n \to \infty} F^n
\end{aligned}\end{equation*}
exists in the sense of weak convergence of CDFs and gives the minimal fixed point of $\Psi$. To interpret this iteration probabilistically, define the truncated systems $G_n = G|_{[-n,n]}$ with $V_n := \{ i \in \mathbb{Z} : -n\leq i\leq n \}$ and $E_n := \{ (i, i+1): -n\leq i<n \}$. Let the roots of $G_n$ and $G$ be the vertex $0$, and define $(c^n, x^n, Z^n)$ on $G_n$ by the corresponding restriction of $(c,x,Z)$. 
Apparently, $(G_n,c^n,x^n,Z^n)$ converges almost surely to $(G,c,x,Z)$ in $\cG_*[\RR\times\cD]$ as $n\to\infty$, which, in particular, implies $\cL(\tau_0^n)\to\cL(\tau_0)$ as $n\to\infty$ by Theorem \ref{thm-main-2} and Corollary \ref{cor:Conv_JumpTimes}. As the vertex $-n$ has no in-neighbors in $G_n$, we see that 
\begin{equation*}\begin{aligned}
\PP[\tau_{-n}^n\leq t]=\PP\Big[\inf_{s\in[0,t]}(x_{-n}+Z_{-n}(s))\leq0\Big],
\end{aligned}\end{equation*}
that is, $F_{\tau_{-n}^n}=\Psi[F^0]$. Moreover, as
\begin{equation*}\begin{aligned}
X_{i+1}^n(t)=x_{i+1}+Z_{i+1}(t)-c_{i,i+1}\bone_{\{\tau_{i}^n\leq t\}}
\end{aligned}\end{equation*}
and that $\tau_i^n$ is independent of $(c_{i,i+1},x_{i+1},Z_{i+1})$, the latter of which has the same distribution as $(c_{-1,0},x_0,Z_0)$, it holds that $F_{\tau_{i+1}^n}=\Psi[F_{\tau_i^n}]$ for $i\ge-n$. As a result, we obtain the relation $F_{\tau_0^n}=\Psi^n[F_{\tau_{-n}^n}]=F^{n+1}$, and the claim is proved.
\end{proof}

\begin{proof}[Proof of Theorem \ref{thm:ConvDelay}]
For each $n\ge1$, we point out that the unique solution to \eqref{eq:delay1} can be obtained by an iteration procedure similar to that in the construction of minimal solutions: $(G,X^{\lambda^n},D^n)=\lim_{N\to\infty}(G,X^{\lambda^n,N},D^{n,N})$, where  
\begin{equation*}\begin{aligned}
&X_v^{\lambda^n,0}(t)=x_v+Z_v(t),\quad
X_v^{\lambda^n,N+1}(t)=x_v+Z_v(t)-\sum_{u\in N_G^-(v)}c_{uv}\lambda_{uv}^n(t,X_u^{\lambda^n,N})\\
&D_t^{n,0}=\emptyset,\quad
D_t^{n,N+1}=\{v\in V:\,\inf_{s\in[0,t]}X_v^{\lambda^n,N}(s)\leq0\}.
\end{aligned}\end{equation*}
As
\begin{equation*}\begin{aligned}
X_v^{\lambda^n,N+1}(t)&=x_v+Z_v(t)-\sum_{u\in N_G^-(v)}c_{uv}\lambda_{uv}^n(t,X_u^{\lambda^n,N})\ge x_v+Z_v(t)-\sum_{u\in N_G^-(v)}c_{uv}\bone_{\{u\in D_t^{n,N}\}},
\end{aligned}\end{equation*}
we can iteratively obtain that $D_t^{n,N}\subset\Gamma^N[\emptyset]_t$, for all $t\in[0,\infty)$ and all $N\ge1$. Therefore, $D_t^n\subset D_t$ for all $t\in[0,\infty)$, which implies $\tau_v\leq\liminf_{n\to\infty}\tau_v^n$ for all $v\in V$.

To prove the reverse inequality, it remains to show that $\tau_v\ge\limsup_{n\to\infty}\tau_v^n$ for all $v\in V$. We fix any $v_0\in V$. If $\tau_{v_0}=\infty$, then the desired inequality holds trivially. If $\tau_{v_0}<\infty$, we can perform a bottom-up induction on the backward default tree $\cT(G,v_0)$ to show that $\tau_v\ge\limsup_{n\to\infty}\tau_v^n$ for all $v\in\cT(G,v_0)$. For the base case, we assume that $v$ is a leaf node of $\cT(G,v_0)$. It necessarily holds that
\begin{equation*}\begin{aligned}
0=X_v(\tau_v)=x_v+Z_v(\tau_v)\ge x_v+Z_v(\tau_v)-\sum_{u\in N_G^-(v)}c_{uv}\lambda_{uv}(t,X_u^\lambda)=X_v^{\lambda^n},    
\end{aligned}\end{equation*}
which means $\tau_v\ge\tau_v^n$ for any $n\ge1$ and thus $\tau_v\ge\limsup_{n\to\infty}\tau_v^n$. For the induction step, let $v$ be a node such that all of its children $u$ in $\cT(G,v_0)$ satisfy $\tau_u\ge\limsup_{n\to\infty}\tau_u^n$. For any $\Delta>0$, there exists $t_\Delta\in(\tau_v,\tau_v+\Delta)$ and $\varepsilon_\Delta>0$ such that 
\begin{equation*}\begin{aligned}
x_v+Z_v(t_\Delta)-\sum_{u\in N_G^-(v)}c_{uv}\bone_{\{u\in D_{\tau_v}\}}+\varepsilon_\Delta<0.
\end{aligned}\end{equation*}
For all sufficiently large $n$, it holds that $\tau_u^n\leq\frac12(\tau_v+t_\Delta)<t_\Delta$ for any $u\in N_G^-(v)$ such that $\tau_u\leq\tau_v$, and that
\begin{equation*}\begin{aligned}
\sum_{u\in N_G^-(v)}c_{uv}\lambda_{uv}^n(t_\Delta,X_u^{\lambda^n})\ge\sum_{u\in N_G^-(v)}c_{uv}\bone_{[\tau_u^n,\infty)}(t_\Delta)-\varepsilon_\Delta    
\end{aligned}\end{equation*}
by equation \eqref{eq:lambda_conv1}. Therefore,
\begin{equation*}\begin{aligned}
X_v^{\lambda^n}(t_\Delta)&=x_v+Z_v(t_\Delta)-\sum_{u\in N_G^-(v)}c_{uv}\lambda_{uv}^n(t_\Delta,X_u^{\lambda^n})\\
&\leq x_v+Z_v(t_\Delta)-\sum_{u\in N_G^-(v)}c_{uv}\bone_{[\tau_u^n,\infty)}(t_\Delta)+\varepsilon_\Delta\\
&\leq x_v+Z_v(t_\Delta)-\sum_{u\in N_G^-(v)}c_{uv}\bone_{\{u\in D_{\tau_v}\}}+\varepsilon_\Delta<0,
\end{aligned}\end{equation*}
which implies $\tau_v^n\leq\tau_v+\Delta$. We then take $n\to\infty$ and then $\Delta\downarrow0$ to obtain that $\tau_v\ge\limsup_{n\to\infty}\tau_v$. The induction argument is then completed, and we have shown that $\tau_v=\lim_{n\to\infty}\tau_v^n$ for all $v\in V$.

To get the convergence of the solution paths $X^{\lambda_n}$, we note that
\begin{equation*}\begin{aligned}
X_v^{\lambda^n}(t)&=x_v+Z_v(t)-\sum_{u\in N_G^-(v)}c_{uv}\lambda_{uv}^n(t,X_u^{\lambda^n})\\
X_v(t)&=x_v+Z_v(t)-\sum_{u\in N_G^-(v)}c_{uv}\bone_{[\tau_u,\infty)}(t).
\end{aligned}\end{equation*}
It follows from equation \eqref{eq:lambda_conv1} and $\tau_v=\lim_{n\to\infty}\tau_v^n$ that
\begin{equation*}\begin{aligned}
&\quad d_{M_1}\Big(\sum_{u\in N_G^-(v)}c_{uv}\lambda_{uv}^n(\cdot,X_u^{\lambda^n}),\sum_{u\in N_G^-(v)}c_{uv}\bone_{[\tau_u,\infty)}(\cdot)\Big)\\ 
&\leq d_{M_1}\Big(\sum_{u\in N_G^-(v)}c_{uv}\lambda_{uv}^n(\cdot,X_u^{\lambda^n}),\sum_{u\in N_G^-(v)}c_{uv}\bone_{[\tau_u^n,\infty)}(\cdot)\Big)\\
&+d_{M_1}\Big(\sum_{u\in N_G^-(v)}c_{uv}\bone_{[\tau_u^n,\infty)}(\cdot),\sum_{u\in N_G^-(v)}c_{uv}\bone_{[\tau_u,\infty)}(\cdot)\Big)\\
&\quad\to0\quad\text{as}\quad n\to\infty.
\end{aligned}\end{equation*}
Therefore, we see that $X_v^{\lambda_n}$ converges almost surely to $X_v$ in $\cD$ as $n\to\infty$ by combining the above limit with Lemma \ref{lem:MarkedGraph_ContMap} and \cite[Lemma 12.7.3]{whitt2002stochastic}.
\end{proof}

\newpage

\bibliography{Main}

\bigskip\bigskip\bigskip

\end{document}